\newtheorem{theorem}{Theorem}[section]
\newtheorem{lemma}[theorem]{Lemma}
\newtheorem{corollary}[theorem]{Corollary}
\newtheorem{proposition}[theorem]{Proposition}
\newtheorem{conjecture}[theorem]{Conjecture}
\theoremstyle{definition}
\newtheorem{definition}[theorem]{Definition}
\theoremstyle{remark}
\newtheorem{remark}[theorem]{Remark}
\numberwithin{equation}{section}
\DeclareMathOperator{\rank}{rk}
\DeclareMathOperator{\Hom}{Hom}
\DeclareMathOperator{\Coh}{Coh}
\DeclareMathOperator{\D}{D}
\DeclareMathOperator{\K}{K}
\DeclareMathOperator{\NS}{NS}
\DeclareMathOperator{\ch}{ch}
\DeclareMathOperator{\Ext}{Ext}
\makeatletter \@namedef{subjclassname@2020}{\textup{2020}
Mathematics Subject Classification} \makeatother
\begin{document}
\title{Stability conditions on fibred threefolds}

\author{Hao Max Sun}
\address{Department of Mathematics, Shanghai Normal University, Shanghai 200234, People's Republic of China}

\email{hsun@shnu.edu.cn, hsunmath@gmail.com}



\subjclass[2020]{14F08, 14J30}

\date{February 1, 2022}

\keywords{Bridgeland stability condition, Bogomolov-Gieseker
inequality, fibred threefold, relative projective plane}

\begin{abstract}
We give a conjectural construction of Bridgeland stability
conditions on the derived category of fibred threefolds. The
construction depends on a conjectural Bogomolov-Gieseker type
inequality for certain stable complexes. It can be considered as a
relative version of the construction of Bayer, Macr\`i and Toda. We
prove the conjectural Bogomolov-Gieseker type inequality in the case
of relative projective planes over curves. This gives the the
existence of Bridgeland stability conditions on such threefolds.
\end{abstract}

\maketitle

\setcounter{tocdepth}{1}

\tableofcontents

\section{Introduction}
Stability conditions
for triangulated categories were introduced by Bridgeland
in \cite{Bri1}. Since then, they have drawn a lot of attentions, and have
been investigated intensively. The existence of stability conditions
on three-dimensional varieties is often considered the biggest open
problem in the theory of Bridgeland stability conditions. In \cite{BMT, BMS, BMSZ}, the authors introduced a conjectural
construction of Bridgeland stability conditions for any projective
threefold. Here the problem was reduced to proving a
Bogomolov-Gieseker type inequality for the third Chern character of
tilt-stable objects. It has been shown to hold for Fano 3-folds
\cite{BMSZ, Piy}, abelian 3-folds \cite{
BMS}, some product type threefolds \cite{Kos}, quintic threefolds \cite{Li2}, threefolds with vanishing Chern classes \cite{Sun}, etc. Recently, Yucheng Liu \cite{Liu} showed the existence of
stability conditions on product type varieties by a different method.

In this paper, we give a conjectural construction of Bridgeland stability
conditions on fibred threefolds. The
construction depends on a conjectural Bogomolov-Gieseker type
inequality for mixed tilt-stable complexes (Conjecture \ref{Conj1}).
We show that this conjecture gives the existence of stability conditions on fibred threefolds (Theorem \ref{main}).
Our construction can be considered as a
relative version of that of Bayer, Macr\`i and Toda \cite{BMT}. We
prove the conjectural Bogomolov-Gieseker type inequalities in the
case of relative projective planes over curves. This gives the the
existence of stability conditions on such threefolds:

\begin{theorem}[=Corollary \ref{Cor}]
Let $E$ be a rank three vector bundle on a complex smooth projective
curve $C$. Then there exist locally finite stability conditions on
$\mathbb{P}(E)$.
\end{theorem}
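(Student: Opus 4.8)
\noindent\emph{Proof strategy.} Write $X:=\mathbb P(E)$ and let $f\colon X\to C$ be the bundle projection. This is a \emph{relative projective plane over a curve}: $X$ is a fibred threefold in the sense of the construction above, with relative hyperplane class $\xi=\mathcal O_{\mathbb P(E)}(1)$ (which is $f$-ample) and fibre class $F=f^{*}(\mathrm{pt})$, and every closed fibre $X_c=f^{-1}(c)$ is isomorphic to $\mathbb P^{2}$. By Theorem \ref{main} it suffices to verify Conjecture \ref{Conj1} — the Bogomolov--Gieseker type inequality for mixed tilt-stable complexes — for $X$ with the mixed polarization data entering that construction. Once this is known, the construction outputs a family of stability conditions on $\mathrm D^{b}(X)$ whose central charges factor through the finite-rank lattice spanned by the mixed Chern character and satisfy the support property with respect to the mixed discriminant; local finiteness of the resulting stability conditions is then automatic. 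So the whole problem is reduced to Conjecture \ref{Conj1} for $X$.

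To prove that inequality I would argue fibre by fibre over $C$. Fix a mixed tilt-stable object $\mathcal E$ with the numerical normalization appearing in the statement of Conjecture \ref{Conj1} (mixed slope zero). The first, crucial, step is a restriction theorem of Mehta--Ramanathan/Bogomolov type for (mixed) tilt-stability: since the $F$-coefficient of the polarization may be taken arbitrarily large, I expect that the restriction of $\mathcal E$ to a general fibre $X_c\cong\mathbb P^{2}$, read up to shift inside the appropriate tilted heart on $\mathbb P^{2}$, is tilt-semistable there. Because $\mathbb P^{2}$ is a surface, tilt-semistability forces the classical (hence known) Bogomolov inequality $\overline{\Delta}_{\omega,\beta}\big(\mathcal E|_{X_c}\big)\ge 0$ for the truncated Chern character. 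The second step globalizes this. Decomposing Chern characters on $X$ in the basis built from $\xi$ and $F$ — equivalently, applying Grothendieck--Riemann--Roch to $f$ and tracking $\ch$ of the relative pushforwards $Rf_{*}\big(\mathcal E\otimes\xi^{\otimes k}\big)$ for $k=0,1,2$ on the curve $C$ — one rewrites $\ch_3^{\beta}(\mathcal E)$ and the full quadratic form $Q$ of Conjecture \ref{Conj1} as the fibrewise discriminant $\overline{\Delta}$ on a general fibre plus correction terms coming from the $F$-direction. On a curve there is no higher Bogomolov obstruction, so those correction terms are controlled by the slope-semistability (with the expected slope) of the cohomology sheaves of $Rf_{*}(\mathcal E\otimes\xi^{\otimes k})$ on $C$, itself a consequence of the fibrewise tilt-semistability. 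Combining the fibrewise inequality, integrated over $C$, with these estimates yields $Q_{\omega,\beta}(\mathcal E)\ge 0$, which is Conjecture \ref{Conj1} for $X$.

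Feeding this back into Theorem \ref{main} produces the asserted locally finite stability conditions on $\mathbb P(E)$, which proves Corollary \ref{Cor}. The principal obstacle I anticipate is the restriction step: showing that a mixed tilt-stable object on $X$ restricts to a tilt-semistable object on a general fibre, and controlling the behaviour on the finitely many special fibres and across the walls of the mixed polarization. A secondary, more bookkeeping-type, difficulty is matching the exact numerical constants when passing from the surface-level Bogomolov inequality on $\mathbb P^{2}$ to the threefold-level inequality $Q$ on $X$; here the explicit geometry of $\mathbb P^{2}$ — the sharp form of its Bogomolov inequality and the exceptional collection $\mathcal O,\mathcal O(1),\mathcal O(2)$ generating $\mathrm D^{b}(\mathbb P^{2})$, which is precisely what makes $X=\mathbb P(E)$ tractable — should keep the computation manageable.
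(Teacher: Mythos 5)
Your reduction to Conjecture \ref{Conj1} via Theorem \ref{main} matches the paper, but the strategy you propose for actually proving the conjecture on $\mathbb{P}(E)$ has a genuine gap: it cannot produce the required bound on $\ch_3^{\beta}(\mathcal{E})$. Restricting a mixed tilt-stable object to a general fibre $X_c\cong\mathbb{P}^2$ and invoking the surface Bogomolov inequality only yields a quadratic inequality in $F\ch_0$, $F\ch_1$, $F\ch_2$ --- essentially the discriminant bound the paper already establishes for all mixed tilt-semistable objects (Theorem \ref{Bog2}) --- and says nothing about the third Chern character, which is the whole content of Conjecture \ref{Conj1}. Your proposed globalization, controlling the ``correction terms'' by slope-semistability of the pushforwards $Rf_*(\mathcal{E}\otimes\xi^{\otimes k})$ on $C$, does not close this gap: $\ch_3(\mathcal{E})$ enters through the degrees of those complexes on $C$, and semistability of a sheaf on a curve constrains its Harder--Narasimhan structure, not its total degree, so no inequality on $\ch_3$ comes out. (You also assert that the support property holds and that local finiteness is then automatic; the paper explicitly remarks that the support property for $(Z_l,\mathcal{A}_t^{\alpha,\beta}(\mathcal{X}))$ is unknown, and instead derives local finiteness from Noetherianity of the heart together with discreteness of $Z_l$.)

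The paper's actual argument (Theorem \ref{main2}) is an Euler-characteristic argument. Using Lemma \ref{large} and Lemma \ref{t-stability}, one checks that $\mathcal{O}_{\mathcal{X}}(H)$ and $\mathcal{O}_{\mathcal{X}}(K_{\mathcal{X}}+H)[1]$ are $\nu_{\alpha,\beta,t}$-stable and that, for $\alpha-2<\beta<1-\alpha$ and $t$ sufficiently large, their mixed slopes are respectively positive and negative. For a semistable $\mathcal{E}$ of mixed slope zero this forces $\Hom(\mathcal{O}_{\mathcal{X}},\mathcal{E}(-H))=0$ and $\Ext^2(\mathcal{O}_{\mathcal{X}},\mathcal{E}(-H))=0$, hence $\chi(\mathcal{E}(-H))\leq0$; expanding $\chi(\mathcal{E}(-H))$ by Grothendieck--Riemann--Roch with the explicit Chern classes of $\mathbb{P}(E)$ (Lemma \ref{can}) gives exactly the inequality (\ref{6.1}), whose coefficient $-\beta(\beta+1)/2$ of $H^2\ch_1^{\beta}$ is positive for $-1<\beta<0$, as Conjecture \ref{Conj1} requires; Theorem \ref{main} then yields the corollary. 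To salvage your approach you would need to replace the fibrewise Bogomolov step by some mechanism that genuinely sees degree three --- which is precisely the role the Hom-vanishing against these two specific line bundles plays in the paper.
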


Throughout this paper, we let $f:\mathcal{X}\rightarrow C$ be a
smooth projective morphism from a complex smooth projective variety
of dimension $n\geq2$ to a complex smooth projective curve. We
denote by $F$ the general fiber of $f$, and fix a nef and relative
ample divisor $H$ on $\mathcal{X}$.

We now give a sketch of our construction; the details will be given in Section \ref{S5}. We use the $\mu_C$-stability introduced in \cite{BLMNPS}
to construct a torsion pair in $\Coh(\mathcal{X})$.
Let $\mathcal{T}_{\beta H}\subset\Coh(\mathcal{X})$ be the category generated by $\mu_C$-stable sheaves
of slope $\mu_C>\beta$ via extension. Similarly, let $\mathcal{F}_{\beta H}$ be the subcategory generated by $\mu_C$-stable sheaves
of slope $\mu_C\leq\beta$.
We define $\Coh^{\beta H}_C(\mathcal{X})\subset\D^b(\mathcal{X})$ as a tilt with respect to the torsion pair $(\mathcal{T}_{\beta H},\mathcal{F}_{\beta H})$:
$$\Coh_C^{\beta H}(\mathcal{X})=\langle\mathcal{T}_{\beta H}, \mathcal{F}_{
\beta H}[1]\rangle.$$
Different from the torsion pair defined via the classical slope-stability, there are torsion sheaves in $\mathcal{F}_{\beta H}$.
We
prove that there is a double-dual operation on $\Coh_C^{ \beta
H}(\mathcal{X})$ as well as for coherent sheaves (Lemma \ref{dual}).
For any $(\alpha,\beta, t)\in
\sqrt{\mathbb{Q}_{>0}}\times\mathbb{Q}\times\mathbb{Q}_{\geq0}$, we then define the following function on $\Coh^{\beta H}_C(\mathcal{X})$:
\begin{eqnarray*}
\nu_{\alpha,\beta,t}(\mathcal{E})=
\frac{(H^{n-2}
+tFH^{n-3})\ch_2^{\beta}(\mathcal{E})-\frac{t+1}{2}\alpha^2FH^{n-1}\ch_0(\mathcal{E})}{FH^{n-2}\ch^{\beta}_1(\mathcal{E})}.
\end{eqnarray*}
We show that it is a slope-function associated to a very weak stability condition, which we call mixed tilt-stability. Like tilt-stability, we
prove that mixed tilt-stable objects also satisfy a Bogomolov type inequality (Theorem \ref{Bog2}).
Using mixed tilt-stability,
we can define a torsion pair in $\Coh^{\beta H}_C(\mathcal{X})$ exactly as in the case of $\mu_C$-stability for $\Coh(\mathcal{X})$ above.
Tilting at this torsion pair produces a heart $\mathcal{A}_t^{\alpha,\beta}(\mathcal{X})$ of a t-structure. We prove that
$\mathcal{A}_t^{\alpha,\beta}(\mathcal{X})$ is noetherian
by the double-dual operation and
the Bogomolov type inequality.
Finally, Conjecture \ref{Conj1} guarantees
the positivity property for some central charge on $\mathcal{A}_t^{\alpha,\beta}(\mathcal{X})$ when $n=3$.

\subsection*{Organization of the paper}

Our paper is organized as follows. In Section \ref{S2}, we review
some basic notions and results of stability for
coherent sheaves on a fibred variety in \cite{BLMNPS}. Then in Section \ref{S3}, we recall the definition of very weak stability
conditions and give a relative version of the
tilt-stability constructed in \cite{BMT}. We will introduce the mixed tilt-stability and give its basic properties in Section \ref{S4}.
In Section \ref{S5}, we give the conjectural construction of Bridgeland stability
conditions on fibred threefolds and propose Conjecture \ref{Conj1}. We
prove this conjectural for relative projective planes over curves in Section \ref{S6}.

\subsection*{Notation}
Let $X$ be a smooth projective variety. We denote by $T_X$ and
$\Omega_X^1$ the tangent bundle and cotangent bundle of $X$,
respectively. $K_X$ and $\omega_X$ denote the canonical divisor and
canonical sheaf of $X$, respectively. We write $c_i(X):=c_i(T_X)$
for the $i$-th Chern class of $X$. We
write $\NS(X)$ for the N\'eron-Severi group of divisors up to
numerical equivalence. We also write $\NS(X)_{\mathbb{Q}}$,
$\NS(X)_{\mathbb{R}}$, etc. for $\NS(X)\otimes\mathbb{Q}$, etc. For
a triangulated category $\mathcal{D}$, we write $\K(\mathcal{D})$
for its Grothendieck group.

Let $\pi:\mathcal{X}\rightarrow S$ be a flat morphism of Noetherian
schemes and $W\subset S$ be a subscheme. We denote by
$\mathcal{X}_W=\mathcal{X}\times_S W$ the fiber of $\pi$ over $W$,
and by $i_W:\mathcal{X}_W\hookrightarrow \mathcal{X}$ the embedding
of the fiber. In the case that $S$ is integral, we write $K(S)$ for
its fraction field, and $\mathcal{X}_{K(S)}$ for the generic fiber
of $\pi$. We denote by $\D^b(\mathcal{X})$ the bounded derived
category of coherent sheaves on $\mathcal{X}$. Given $E\in
\D^b(\mathcal{X})$, we write $E_W$ (resp., $E_{K(S)}$) for the
pullback to $\mathcal{X}_W$ (resp., $\mathcal{X}_{K(S)}$).

Let $F$ be a coherent sheaf on $X$. We write $H^j(F)$ ($j\in
\mathbb{Z}_{\geq0}$) for the cohomology groups of $F$ and write
$\dim F$ for the dimension of its support. We write $\Coh_{\leq
d}(X)\subset\Coh(X)$ for the subcategory of sheaves supported in
dimension $\leq d$. Given a bounded t-structure on $\D^b(X)$ with
heart $\mathcal{A}$ and an object $E\in \D^b(X)$, we write
$\mathcal{H}_{\mathcal{A}}^j(E)$ ($j\in \mathbb{Z}$) for the
cohomology objects with respect to $\mathcal{A}$. When
$\mathcal{A}=\Coh(X)$, we simply write $\mathcal{H}^j(E)$. Given a
complex number $z\in\mathbb{C}$, we denote its real and imaginary
part by $\Re z$ and $\Im z$, respectively. We write
$\sqrt{\mathbb{Q}_{>0}}$ for the set $\{\sqrt{x}: x\in
\mathbb{Q}_{>0}\}$.


\subsection*{Acknowledgments}
I would like to thank Yunfeng Jiang and Yucheng Liu for useful discussions.
The author was supported by National Natural Science
Foundation of China (Grant No. 11771294, 11301201).

\section{Relative slope-stability}\label{S2}
We will review some
results in \cite{BLMNPS} and some basic notions of stability for
coherent sheaves.

For any $\mathbb{R}$-divisor $D$ on $\mathcal{X}$, we define the
twisted Chern character $\ch^{D}=e^{-D}\ch$. More explicitly, we
have
\begin{eqnarray*}
\begin{array}{lcl}
\ch^{D}_0=\ch_0=\rank  && \ch^{D}_2=\ch_2-D\ch_1+\frac{D^2}{2}\ch_0\\
&&\\
\ch^{D}_1=\ch_1-D\ch_0 &&
\ch^{D}_3=\ch_3-D\ch_2+\frac{D^2}{2}\ch_1-\frac{D^3}{6}\ch_0.
\end{array}
\end{eqnarray*}

The first important notion of stability for a sheaf is the relative
slope-stability. We define the relative slope $\mu_{H, F}$ of a
coherent sheaf $\mathcal{E}\in \Coh(\mathcal{X})$ by
\begin{eqnarray*}
\mu_{H, F}(\mathcal{E})= \left\{
\begin{array}{lcl}
+\infty,  & &\mbox{if}~\ch_0(\mathcal{E})=0,\\
&&\\
\frac{FH^{n-2}\ch_1(\mathcal{E})}{FH^{n-1}\ch_0(\mathcal{E})}, &
&\mbox{otherwise}.
\end{array}\right.
\end{eqnarray*}

\begin{definition}\label{slope}
A coherent sheaf $\mathcal{E}$ on $\mathcal{X}$ is $\mu_{H,
F}$-(semi)stable (or relative slope-(semi)stable) if, for all
non-zero subsheaves $\mathcal{F}\hookrightarrow \mathcal{E}$, we
have
$$\mu_{H, F}(\mathcal{F})<(\leq)\mu_{H, F}(\mathcal{E}/\mathcal{F}).$$
\end{definition}

Similarly, for any point $s\in C$, we can define
$\mu_{H_s}$-stability (or slope-stability) of a coherent sheaf
$\mathcal{G}$ on the fiber $\mathcal{X}_s$ over $s$ for the slope
$\mu_{H_s}$:
\begin{eqnarray*}
\mu_{H_s}(\mathcal{G})= \left\{
\begin{array}{lcl}
+\infty,  & &\mbox{if}~\ch_0(\mathcal{G})=0,\\
&&\\
\frac{H_s^{n-2}\ch_1(\mathcal{G})}{H_s^{n-1}\ch_0(\mathcal{G})}, &
&\mbox{otherwise}.
\end{array}\right.
\end{eqnarray*}
One sees that $\mu_{H, F}(\mathcal{E})=\mu_{H_s}(\mathcal{E}_s)$.

\begin{definition}\label{tor}
Let $\mathcal{A}_C$ be the heart of a $C$-local $t$-structure on
$\D^b(\mathcal{X})$ (see \cite[Definition 4.10]{BLMNPS}), and let
$\mathcal{E}\in \mathcal{A}_C$.
\begin{enumerate}
\item We say $\mathcal{E}$ is $C$-flat if $\mathcal{E}_c\in \mathcal{A}_c$
for every point $c\in C$, where $\mathcal{A}_c$ is the heart of the
$t$-structure given by \cite[Theorem 5.3]{BLMNPS} applied to the
embedding $c\hookrightarrow C$.

\item An object $\mathcal{F}\in\D^b(\mathcal{X})$ is called $C$-torsion if
it is the pushforward of an object in $\D^b(\mathcal{X}_W)$ for some
proper closed subscheme $W\subset C$.

\item $\mathcal{E}$ is called $C$-torsion free if it
contains no nonzero $C$-torsion subobject.
\end{enumerate}
We denote by $\mathcal{A}_{C\text{-tor}}$ the subcategory of
$C$-torsion objects in $\mathcal{A}_C$, and by
$\mathcal{A}_{C\text{-tf}}$ the subcategory of $C$-torsion free
objects. We say $\mathcal{A}_C$ has a $C$-torsion theory if the pair
of subcategories $(\mathcal{A}_{C\text{-tor}},
\mathcal{A}_{C\text{-tf}})$ forms a torsion pair in the sense of
\cite[Definition 4.6]{BLMNPS}.
\end{definition}

\begin{lemma}\label{flat}
Let $\mathcal{E}\in \mathcal{A}_C$ be as in Definition \ref{tor}.
Then
\begin{enumerate}
\item $\mathcal{E}$ is $C$-flat if and only if $\mathcal{E}$ is
$C$-torsion free;
\item $\mathcal{E}$ is $C$-torsion if and only if
$\mathcal{E}_{K(C)}=0$.
\end{enumerate}
\end{lemma}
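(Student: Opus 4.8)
The plan is to analyze both statements through the lens of the $C$-local $t$-structure formalism from \cite{BLMNPS}, reducing everything to statements about the generic fiber $\mathcal{X}_{K(C)}$ and the closed fibers $\mathcal{X}_c$.

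\textbf{Part (1): $C$-flat $\iff$ $C$-torsion free.} First I would recall that, for a $C$-local $t$-structure, the heart $\mathcal{A}_C$ behaves well under base change to points $c \in C$: there is a right-exact (in the appropriate sense) "restriction" functor sending $\mathcal{E} \mapsto \mathcal{E}_c$, and $\mathcal{E}$ is $C$-flat precisely when this derived restriction is concentrated in degree $0$ for all $c$. For the forward direction, suppose $\mathcal{E}$ is $C$-flat and let $\mathcal{F} \hookrightarrow \mathcal{E}$ be a nonzero $C$-torsion subobject, so $\mathcal{F} = (i_W)_* \mathcal{G}$ for a proper closed $W \subset C$; I would derive a contradiction by restricting to a point $c \in W$ and using that $(i_W)_*$ of something nonzero, upon restriction to $c$, produces cohomology in a negative degree (coming from the derived pullback of the structure sheaf of the non-reduced or positive-length scheme $W$ at $c$), contradicting $\mathcal{E}_c \in \mathcal{A}_c$. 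For the converse, if $\mathcal{E}$ is not $C$-flat, then for some $c$ the restriction $\mathcal{E}_c$ has nonzero cohomology $\mathcal{H}^{-1}_{\mathcal{A}_c}(\mathcal{E}_c)$ (it can only be in degrees $-1$ and $0$ by the $t$-exactness properties, since $C$ is a curve so $Li_c^*$ has cohomological amplitude $[-1,0]$); I would then show this $\mathcal{H}^{-1}$ term, pushed forward, gives a nonzero $C$-torsion subobject of $\mathcal{E}$, using the adjunction $\Hom(i_{c*}(-), \mathcal{E}) \cong \Hom(-, Li_c^!\mathcal{E})$ or more directly the standard exact triangle relating $\mathcal{E}$ to its restriction at $c$ and its "complement" supported away from $c$.

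\textbf{Part (2): $C$-torsion $\iff$ $\mathcal{E}_{K(C)} = 0$.} The forward direction is essentially immediate: if $\mathcal{E} = (i_W)_*\mathcal{G}$ with $W \subset C$ proper closed, then pulling back to the generic fiber $\mathcal{X}_{K(C)}$ kills it since $W$ meets the generic point trivially — $\mathcal{X}_W \times_{\mathcal{X}} \mathcal{X}_{K(C)} = \emptyset$. For the converse, suppose $\mathcal{E}_{K(C)} = 0$; I would invoke the generic flatness / spreading-out principle: the condition $\mathcal{E}_{K(C)} = 0$ means $\mathcal{E}$ is supported (scheme-theoretically, in the derived sense) over a proper closed subscheme $W \subset C$, and then one shows $\mathcal{E}$ lies in the image of $(i_W)_*$ for a suitable thickening $W$. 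Concretely I would use the $C$-torsion theory structure: one expects $\mathcal{A}_C$ to have a $C$-torsion theory here (or at least that the $C$-torsion objects form a Serre subcategory closed under the relevant operations), and the maximal $C$-torsion subobject $\mathcal{E}_{C\text{-tor}} \subset \mathcal{E}$ has quotient $\mathcal{E}/\mathcal{E}_{C\text{-tor}}$ which is $C$-torsion free, hence $C$-flat by Part (1); a $C$-flat object with vanishing generic fiber must be zero (a flat family whose generic member is zero is zero, since the function $c \mapsto \text{"class of } \mathcal{E}_c\text{"}$ is locally constant), so $\mathcal{E} = \mathcal{E}_{C\text{-tor}}$ is $C$-torsion.

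The main obstacle I anticipate is making the restriction-to-a-point arguments in Part (1) rigorous within the abstract $C$-local $t$-structure framework, rather than the familiar case $\mathcal{A}_C = \Coh(\mathcal{X})$: one must quote the precise cohomological amplitude and $t$-exactness statements for $Li_c^*$ and $i_{c*}$ from \cite[Theorem 5.3]{BLMNPS} and its surrounding results, and verify that the "obvious" picture from ordinary flat families of sheaves carries over. In particular, the step identifying $\mathcal{H}^{-1}_{\mathcal{A}_c}(\mathcal{E}_c)$ with something that lifts to a $C$-torsion subobject of $\mathcal{E}$ requires care about whether the relevant functors are defined on the level of hearts or only derived categories, and whether $\mathcal{A}_C$ genuinely possesses the $C$-torsion theory — though for the specific $t$-structures used later in the paper (tilts of $\Coh(\mathcal{X})$) this should follow from the corresponding statement for $\Coh(\mathcal{X})$ together with the compatibility of tilting with base change along $c \hookrightarrow C$.
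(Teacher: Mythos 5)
The paper offers no argument of its own for this lemma: its entire proof is the citation ``See \cite[Lemma 6.12 and Lemma 6.4]{BLMNPS}''. So your sketch is really a reconstruction of the arguments in the reference rather than an alternative to anything in the paper. Part (1) of your sketch is structurally sound: the forward direction follows from the long exact sequence of $\mathcal{H}^{\bullet}_{\mathcal{A}_c}(Li_c^*(-))$ applied to a $C$-torsion subobject $\mathcal{F}\hookrightarrow\mathcal{E}$ (a subobject of a $C$-flat object is $C$-flat because $Li_c^*$ has amplitude $[-1,0]$, while a nonzero $C$-torsion object is never $C$-flat), and the converse follows from the triangle $\mathcal{E}\otimes f^*\mathcal{O}_C(-c)\to\mathcal{E}\to i_{c*}\mathcal{E}_c$, which exhibits $i_{c*}\mathcal{H}^{-1}_{\mathcal{A}_c}(\mathcal{E}_c)$, after twisting back by $f^*\mathcal{O}_C(c)$, as a nonzero $C$-torsion subobject of $\mathcal{E}$ whenever $\mathcal{E}$ fails to be $C$-flat at $c$. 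These are the expected arguments and I see no problem with them beyond the need to quote the $t$-exactness of $i_{c*}$ and the amplitude of $Li_c^*$ from \cite{BLMNPS}, which you correctly flag.

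The converse of Part (2) is where your sketch has a genuine soft spot. Your second route does not work as stated: ``a flat family whose generic member is zero is zero, since $c\mapsto[\mathcal{E}_c]$ is locally constant'' only shows that the (numerical, or $K$-theoretic) class of every fiber vanishes, and a nonzero object of $\mathcal{A}_c$ may well have vanishing class; no discrete invariant can substitute for the vanishing of the object itself. Moreover, this route presupposes the existence of the maximal $C$-torsion subobject, i.e.\ that $\mathcal{A}_C$ has a $C$-torsion theory, which Definition \ref{tor} explicitly treats as a property to be verified rather than a given (in the paper it is only established for $\Coh_C^{\beta H}(\mathcal{X})$ later, via \cite[Corollary 20.10]{BLMNPS}). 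Your first route is the correct one, but the step you gesture at --- passing from ``$\mathcal{E}_{K(C)}=0$, hence $\mathcal{E}$ is set-theoretically supported over a finite subset $Z\subset C$ after spreading out'' to ``$\mathcal{E}$ is the pushforward of an object from some infinitesimal thickening $\mathcal{X}_W$ of $\mathcal{X}_Z$'' --- is precisely the content of \cite[Lemma 6.4]{BLMNPS} and is the entire difficulty of the statement. It amounts to showing that a local equation of $Z$, raised to a sufficiently high power, annihilates $\mathcal{E}$ (the dévissage by the filtration $\mathcal{G}_j=\pi^j\cdot\mathcal{E}$ that the paper itself borrows from \cite[Lemma 6.11]{BLMNPS} in the proof of Lemma \ref{ch2}); until that is supplied, the converse of Part (2) is not proved.
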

\begin{proof}
See \cite[Lemma 6.12 and Lemma 6.4]{BLMNPS}.
\end{proof}

Since  $\Coh(\mathcal{X})$ is the heart of the natural $C$-local
$t$-structure on $\D^b(\mathcal{X})$, one can applies the above
definition and lemma to coherent sheaves. The following lemma shows
the relation of the relative slope-stability to the slop-stability.

\begin{lemma}\label{f-stable}
Let $\mathcal{E}$ be a $C$-torsion free sheaf on $\mathcal{X}$. Then
$\mathcal{E}$ is $\mu_{H, F}$-(semi)stable if and only if there
exists an open subset $U\subset C$ such that $\mathcal{E}_{s}$ is
$\mu_{H_s}$-(semi)stable for any point $s\in U$.
\end{lemma}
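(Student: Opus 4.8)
The plan is to pass between $\mathcal{X}$ and the generic fiber $\mathcal{X}_{K(C)}$, and then to spread out. First I would observe that since $\mathcal{E}$ is $C$-torsion free, Lemma \ref{flat} gives that $\mathcal{E}$ is $C$-flat, so $\mathcal{E}_{K(C)}$ is a genuine coherent sheaf on $\mathcal{X}_{K(C)}$ and each $\mathcal{E}_s$ is a coherent sheaf on the fiber $\mathcal{X}_s$. The key identity $\mu_{H,F}(\mathcal{E})=\mu_{H_s}(\mathcal{E}_s)$ already noted in the text also holds after base change to $K(C)$ (using the polarization $H_{K(C)}$ on $\mathcal{X}_{K(C)}$), because Chern characters and intersection numbers are compatible with flat base change. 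So the slopes match on all three levels, and the whole question is about which \emph{subsheaves} one is allowed to test against.

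The heart of the argument is the equivalence
\begin{center}
$\mathcal{E}$ is $\mu_{H,F}$-(semi)stable $\iff$ $\mathcal{E}_{K(C)}$ is $\mu_{H_{K(C)}}$-(semi)stable.
\end{center}
For the forward direction, given a destabilizing subsheaf $\mathcal{G}\subset\mathcal{E}_{K(C)}$ over the generic fiber, I would take its closure/saturation inside $\mathcal{E}$ — more precisely, the image of $j_*\mathcal{G}\cap\mathcal{E}$ under the adjunction, where $j:\mathcal{X}_{K(C)}\hookrightarrow\mathcal{X}$ is the inclusion of the generic fiber — to produce a subsheaf $\widetilde{\mathcal{G}}\subset\mathcal{E}$ with $\widetilde{\mathcal{G}}_{K(C)}=\mathcal{G}$; then $\mu_{H,F}(\widetilde{\mathcal{G}})=\mu_{H_{K(C)}}(\mathcal{G})$ contradicts $\mu_{H,F}$-stability. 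Conversely, a subsheaf $\mathcal{F}\subset\mathcal{E}$ restricts to a subsheaf $\mathcal{F}_{K(C)}\subset\mathcal{E}_{K(C)}$; if $\mathcal{F}_{K(C)}=0$ then $\mathcal{F}$ is $C$-torsion, hence zero since $\mathcal{E}$ is $C$-torsion free, so one may assume $\mathcal{F}_{K(C)}\neq 0$ and again the slopes are preserved. This reduces the lemma to the statement over $K(C)$.

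Finally I would spread out from $K(C)$ to an open $U\subset C$. Semistability (resp.\ geometric/ordinary stability) of a sheaf on a projective variety over a field is an open condition in flat families: by \cite{BLMNPS} (or by the standard openness of semistability, e.g.\ in the relative moduli theory) there is a nonempty open $U\subset C$ over which $\mathcal{E}_s$ is $\mu_{H_s}$-(semi)stable for all $s\in U$, given that the generic fiber is; and conversely if $\mathcal{E}_s$ is $\mu_{H_s}$-(semi)stable for the generic point (which lies in any such $U$), then $\mathcal{E}_{K(C)}$ is $\mu_{H_{K(C)}}$-(semi)stable. Combining the two reductions gives the claim. The main obstacle I anticipate is the closure/saturation step in the forward direction of the generic-fiber equivalence — making sure that extending a destabilizing subsheaf from $\mathcal{X}_{K(C)}$ back to a subsheaf of $\mathcal{E}$ on all of $\mathcal{X}$ genuinely preserves the slope and does not accidentally pick up $C$-torsion; this is exactly where $C$-torsion freeness (equivalently $C$-flatness) of $\mathcal{E}$ is essential, and where I would lean on the torsion-theory formalism of \cite{BLMNPS}.
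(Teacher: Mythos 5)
Your proposal is correct and follows essentially the same route as the paper: reduce to the generic fibre via the identity $\mu_{H,F}(\mathcal{F})=\mu_{H_{K(C)}}(\mathcal{F}_{K(C)})$ for ($C$-flat) subsheaves, use the extension of coherent subsheaves from $\mathcal{X}_{K(C)}$ to $\mathcal{X}$ (the paper's citation of Hartshorne, Ex.\ II.5.15, is exactly your closure/saturation step), and conclude by openness of slope-(semi)stability. No gaps; your write-up is just a more detailed version of the paper's argument.
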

\begin{proof}
By Lemma \ref{flat}, one deduces any subsheaf $\mathcal{F}$ of
$\mathcal{E}$ is flat over $C$. Thus
$\mathcal{F}_{K(C)}\in\Coh(\mathcal{X}_{K(C)})$. Since
$$\mu_{H,F}(\mathcal{F})=\mu_{H_{K(C)}}(\mathcal{F}_{K(C)}),$$ from \cite[Ex.
II.5.15]{Hart}, one sees that $\mathcal{E}$ is $\mu_{H,
F}$-(semi)stable if and only if $\mathcal{E}_{K(C)}$ is
$\mu_{H_{K(C)}}$-(semi)stable. Hence the desired conclusion follows
from the openness of slope-stability.
\end{proof}

Another important notion of stability for a sheaf on a fibration is
the stability introduced in \cite[Example 15.3]{BLMNPS}. We define
the slope $\mu_{C}$ of a coherent sheaf $\mathcal{E}\in
\Coh(\mathcal{X})$ by

\begin{eqnarray*}
\mu_{C}(\mathcal{E})= \left\{
\begin{array}{ll}
\frac{FH^{n-2}\ch_1(\mathcal{E})}{FH^{n-1}\ch_0(\mathcal{E})}, &\mbox{if}~\ch_0(\mathcal{E})\neq0,\\
\frac{H^{n-2}\ch_2(\mathcal{E})}{H^{n-1}\ch_1(\mathcal{E})},&\mbox{if}~\mathcal{E}_{K(C)}=0~\mbox{and}~H^{n-1}\ch_1(\mathcal{E})\neq0,\\
+\infty, &\mbox{otherwise}.
\end{array}\right.
\end{eqnarray*}
We can define $\mu_C$-stability as in Definition \ref{slope}.

\begin{definition}
A coherent sheaf $\mathcal{E}$ on $\mathcal{X}$ is
$\mu_{C}$-(semi)stable if, for all non-zero subsheaves
$\mathcal{F}\hookrightarrow \mathcal{E}$, we have
$$\mu_{C}(\mathcal{F})<(\leq)\mu_{C}(\mathcal{E}/\mathcal{F}).$$
\end{definition}

For $\mathcal{E}\in\D^b(\mathcal{X})$, we define
$$Z_{K(C)}(\mathcal{E}):=-FH^{n-2}\ch_1(\mathcal{E})+iFH^{n-1}\ch_0(\mathcal{E})$$ and
$$Z_{C\text{-tor}}(\mathcal{E}):=-H^{n-2}\ch_2(\mathcal{E})+iH^{n-1}\ch_1(\mathcal{E}),$$ respectively.
Then by Lemma \ref{Chern}, one sees that
$$Z_{K(C)}(\mathcal{E})=Z_{C\text{-tor}}(i_{W*}\mathcal{E}_W),$$ for all closed subscheme $W\subset
C$. It follows that $(Z_{K(C)}, Z_{C\text{-tor}})$ is a central
charge on $D^b(\mathcal{X})$ over $C$ in the sense of
\cite[Definition 13.1]{BLMNPS}. By \cite[Proposition 16.6]{BLMNPS},
one deduces that $(Z_{K(C)}, Z_{C\text{-tor}}, \Coh(\mathcal{X}))$
is a weak Harder-Narasimhan structure on $\D^b(\mathcal{X})$ over
$C$ in the sense of \cite[Proposition 15.9]{BLMNPS}. Given a
coherent sheaf $\mathcal{E}$ on $\mathcal{X}$, we let
\begin{eqnarray*}
Z_C(\mathcal{E})= \left\{
\begin{array}{ll}
Z_{K(C)}(\mathcal{E}), &\mbox{if}~\mathcal{E}_{K(C)}\neq0,\\
Z_{C\text{-tor}}(\mathcal{E}),&\mbox{otherwise}.
\end{array}\right.
\end{eqnarray*}
It turns out that $$\mu_{C}(\mathcal{E})=-\frac{\Re
Z_C(\mathcal{E})}{\Im Z_C(\mathcal{E})}.$$ By the definition of the
weak Harder-Narasimhan structure, one sees that like the
slope-stability and the relative slope-stability, the
$\mu_{C}$-stability also satisfies the following weak see-saw
property and the Harder-Narasimhan property.
\begin{proposition}\label{pro2.6}
Let $\mathcal{E}\in\Coh(\mathcal{X})$ be a non-zero sheaf.
\begin{enumerate}
\item For any short exact sequence $$0\rightarrow \mathcal{F}\rightarrow \mathcal{E}\rightarrow
\mathcal{G}\rightarrow0$$ in $\Coh(\mathcal{X})$, we have
$$\mu_{C}(\mathcal{F})\leq\mu_{C}(\mathcal{E})\leq\mu_{C}(\mathcal{G})~\mbox{or}~\mu_{C}(\mathcal{F})\geq\mu_{C}(\mathcal{E})\geq\mu_{C}(\mathcal{G}).$$

\item There is a filtration (called Harder-Narasimhan filtration)
$$0=\mathcal{E}_0\subset \mathcal{E}_1\subset\cdots\subset \mathcal{E}_m=\mathcal{E}$$
such that: $\mathcal{G}_i:=\mathcal{E}_i/\mathcal{E}_{i-1}$ is
$\mu_{C}$-semistable, and
$\mu_{C}(\mathcal{G}_1)>\cdots>\mu_{C}(\mathcal{G}_m)$. We write
$\mu^+_{C}(\mathcal{E}):=\mu_{C}(\mathcal{G}_1)$ and
$\mu^-_{C}(\mathcal{E}):=\mu_{C}(\mathcal{G}_m)$.
\end{enumerate}
\end{proposition}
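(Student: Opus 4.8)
The plan is to read off both assertions from the weak Harder--Narasimhan structure over $C$ that was exhibited just above the statement: by \cite[Proposition 16.6]{BLMNPS} the triple $(Z_{K(C)}, Z_{C\text{-tor}}, \Coh(\mathcal{X}))$ is a weak HN structure on $\D^b(\mathcal{X})$ over $C$ in the sense of \cite[Proposition 15.9]{BLMNPS}, and the slope function attached to it is precisely $\mu_C(\mathcal{E}) = -\Re Z_C(\mathcal{E})/\Im Z_C(\mathcal{E})$. Thus the weak see-saw inequality and the existence of Harder--Narasimhan filtrations for $\mu_C$-semistability are part of the output of that formalism; what I would write out below is only the translation to the concrete slope $\mu_C$.

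For (1) I would split a short exact sequence $0\to\mathcal{F}\to\mathcal{E}\to\mathcal{G}\to0$ in $\Coh(\mathcal{X})$ into cases via Lemma \ref{flat}(2), i.e. according to whether $\mathcal{F}$ and $\mathcal{G}$ are $C$-torsion (equivalently, restrict to $0$ on $\mathcal{X}_{K(C)}$). The key observation is that a $C$-torsion sheaf $\mathcal{T}$ has $\ch_0(\mathcal{T})=0$ and $FH^{n-2}\ch_1(\mathcal{T})=0$ — its support meets only finitely many fibres and $F\cdot F'=0$ for fibre classes — so $Z_{K(C)}(\mathcal{T})=0$. If neither $\mathcal{F}$ nor $\mathcal{G}$ is $C$-torsion then neither is $\mathcal{E}$, all three values of $Z_C$ agree with the corresponding $Z_{K(C)}$, and the claim is the usual weak see-saw for the weak stability function $Z_{K(C)}$ on $\Coh(\mathcal{X})$ (its imaginary part $FH^{n-1}\ch_0$ is $\geq0$, and $-\Re Z_{K(C)}=FH^{n-2}\ch_1\geq0$ on the torsion locus, using $H$ nef and $f$-ample and effectivity of $\ch_1$ of a torsion sheaf). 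If both $\mathcal{F}$ and $\mathcal{G}$ are $C$-torsion then so is $\mathcal{E}$ and the same argument goes through with $Z_{C\text{-tor}}$ in place of $Z_{K(C)}$. In the two mixed cases $Z_{K(C)}$ kills the $C$-torsion term, so $Z_C(\mathcal{E})$ equals $Z_C$ of the non-$C$-torsion term among $\mathcal{F},\mathcal{G}$; hence $\mu_C(\mathcal{E})$ equals that term's $\mu_C$ and one of the two chains of inequalities holds trivially.

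For (2) I would invoke \cite[Propositions 15.9 and 16.6]{BLMNPS}: $\Coh(\mathcal{X})$ is Noetherian and carries the weak HN structure over $C$ above, which produces for every sheaf a filtration with $\mu_C$-semistable graded pieces of strictly decreasing slope. A hands-on proof would note that $(\Coh(\mathcal{X})_{C\text{-tor}},\Coh(\mathcal{X})_{C\text{-tf}})$ is a torsion pair, that $\mu_C$ restricts on each factor to the slope of an honest weak stability function ($Z_{C\text{-tor}}$, resp. $Z_{K(C)}$) on a Noetherian abelian category — hence HN filtrations exist there — and then glue. The step I expect to be the real obstacle is precisely this gluing: a $C$-torsion sheaf and a $C$-torsion-free sheaf may share the same finite $\mu_C$-value, so one cannot simply concatenate the HN filtration of the maximal $C$-torsion subsheaf with that of the torsion-free quotient; the torsion and torsion-free graded pieces must be interleaved according to their slopes, and since the relevant extensions of torsion-free by torsion need not split, the semistable pieces are genuinely mixed objects. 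Carrying out this interleaving is exactly what the ``central charge / weak HN structure over $C$'' machinery of \cite[Sections 15--16]{BLMNPS} is designed for, which is why I would lean on it rather than redo it by hand.
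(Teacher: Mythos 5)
Your proposal is correct and follows essentially the same route as the paper: the paper gives no separate proof of Proposition \ref{pro2.6}, but derives it in the preceding paragraph precisely from the fact that $(Z_{K(C)}, Z_{C\text{-tor}}, \Coh(\mathcal{X}))$ is a weak Harder--Narasimhan structure over $C$ via \cite[Propositions 15.9 and 16.6]{BLMNPS}, which is the same machinery you invoke. The extra case analysis you supply for (1) and your remark about interleaving torsion and torsion-free pieces in (2) are accurate elaborations of what that formalism delivers, not a different argument.
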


The below lemma gives the relation between the Chern characters of
objects on fibers and their pushforwards.

\begin{lemma}\label{Chern}
Let $W$ be a closed subscheme of $C$, and $j$ be a positive integer.
\begin{enumerate}
\item For any $\mathcal{E}\in\D^b(\mathcal{X})$ and
$\mathbb{Q}$-divisor $D$ on $\mathcal{X}$, we have
$$\ch_j^D(i_{W*}\mathcal{E}_W)=\mathcal{X}_W\ch^D_{j-1}(\mathcal{E}).$$
\item Assume that $W$ is a closed point of $C$. Then for any
$\mathcal{Q}\in\D^b(\mathcal{X}_W)$ we have
$$\ch_{j}(i_{W*}\mathcal{Q})=i_{W*}\ch_{j-1}(\mathcal{Q}).$$
\end{enumerate}
\end{lemma}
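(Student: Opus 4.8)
The plan is to deduce both statements from Grothendieck--Riemann--Roch (GRR) applied to the closed immersion $i_W\colon \mathcal{X}_W\hookrightarrow\mathcal{X}$, after reducing to the case where $i_W$ is a regular embedding. For part (1), since $W\subset C$ is a closed subscheme of a smooth curve, $W$ is an effective divisor (a zero-dimensional subscheme), so $\mathcal{X}_W=f^{*}W$ is an effective Cartier divisor on $\mathcal{X}$, cut out by a single equation; hence $i_W$ is a regular embedding of codimension one. The first step is therefore to record the Koszul resolution $0\to\mathcal{O}_\mathcal{X}(-\mathcal{X}_W)\to\mathcal{O}_\mathcal{X}\to i_{W*}\mathcal{O}_{\mathcal{X}_W}\to 0$, which already gives $\ch(i_{W*}\mathcal{O}_{\mathcal{X}_W})=1-e^{-\mathcal{X}_W}=\mathcal{X}_W-\tfrac{1}{2}\mathcal{X}_W^{2}+\cdots$. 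But $\mathcal{X}_W^{2}=f^{*}(W)\cdot f^{*}(W)=f^{*}(W\cdot W)=0$ in $\mathrm{CH}^{\bullet}(\mathcal{X})$ because $W\cdot W=0$ on the curve $C$, so in fact $\ch(i_{W*}\mathcal{O}_{\mathcal{X}_W})=[\mathcal{X}_W]$ with no higher-degree corrections. Combined with the projection formula $i_{W*}(\mathcal{E}_W)\cong i_{W*}Li_W^{*}\mathcal{E}\cong\mathcal{E}\otimes^{L}i_{W*}\mathcal{O}_{\mathcal{X}_W}$ and multiplicativity of the Chern character, this yields $\ch(i_{W*}\mathcal{E}_W)=\ch(\mathcal{E})\cdot[\mathcal{X}_W]$, and reading off the degree-$j$ part (noting that $\mathcal{X}_W\cdot\ch^D_{j-1}(\mathcal{E})$ is unaffected by the twist since $e^{-D}$ commutes with everything) gives $\ch_j^D(i_{W*}\mathcal{E}_W)=\mathcal{X}_W\ch^D_{j-1}(\mathcal{E})$, as claimed; the twisted version follows formally by applying the untwisted identity to $e^{-D}\ch$.

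For part (2), $W$ is now a single closed point of $C$, so $\mathcal{X}_W$ is a smooth projective variety (a fiber of the smooth morphism $f$) and $i_W\colon\mathcal{X}_W\hookrightarrow\mathcal{X}$ is a regular embedding of codimension one with trivial normal bundle $N_{\mathcal{X}_W/\mathcal{X}}=\mathcal{O}_{\mathcal{X}_W}(\mathcal{X}_W)\cong\mathcal{O}_{\mathcal{X}_W}$ (since $\mathcal{O}_C(W)|_W\cong\mathcal{O}_W$ on the curve). I would apply GRR to $i_W$: for $\mathcal{Q}\in\D^b(\mathcal{X}_W)$,
\begin{equation*}
\ch(i_{W*}\mathcal{Q})\,\td(\mathcal{X})=i_{W*}\bigl(\ch(\mathcal{Q})\,\td(\mathcal{X}_W)\bigr).
\end{equation*}
Because the normal bundle is trivial, the adjunction sequence $0\to T_{\mathcal{X}_W}\to i_W^{*}T_{\mathcal{X}}\to N_{\mathcal{X}_W/\mathcal{X}}\to 0$ gives $\td(i_W^{*}\mathcal{X})=\td(\mathcal{X}_W)$, so by the projection formula $i_{W*}(\ch(\mathcal{Q})\td(\mathcal{X}_W))=i_{W*}(\ch(\mathcal{Q})i_W^{*}\td(\mathcal{X}))=i_{W*}(\ch(\mathcal{Q}))\cdot\td(\mathcal{X})$. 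Cancelling the invertible class $\td(\mathcal{X})$ yields $\ch(i_{W*}\mathcal{Q})=i_{W*}\ch(\mathcal{Q})$; extracting the degree-$j$ component and using that $i_{W*}$ raises codimension by one gives $\ch_j(i_{W*}\mathcal{Q})=i_{W*}\ch_{j-1}(\mathcal{Q})$.

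The only genuine subtlety, and the place I would be most careful, is the regularity of the embedding $i_W$ in part (1) when $W$ is a non-reduced zero-dimensional subscheme of $C$: one must check that $\mathcal{X}_W\to\mathcal{X}$ is still cut out by a single regular element, which holds because flat pullback of a Cartier divisor is a Cartier divisor and $f$ is flat. Once that is in hand, the vanishing $\mathcal{X}_W^{2}=0$ (or more conceptually, $\mathcal{X}_W^{2}=f^{*}(W^{2})=0$ because $\dim C=1$) is what makes the formula clean with no higher Todd/Chern corrections — this is the one numerical input genuinely special to the curve base, and it is worth stating explicitly rather than burying it in GRR bookkeeping.
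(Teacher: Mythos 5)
Your proof is correct and follows essentially the same route as the paper: part (1) rests on the Koszul/structure-sheaf exact triangle for the Cartier divisor $\mathcal{X}_W$ together with the vanishing $\mathcal{X}_W^2=f^*(W\cdot W)=0$ (which the paper uses implicitly in its expansion and you rightly make explicit), and part (2) is the same Grothendieck--Riemann--Roch computation with trivial normal bundle. No gaps.
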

\begin{proof}
(1) Since $\mathcal{X}_W$ is a divisor of $\mathcal{X}$, from the
standard exact triangle
$$\mathcal{E}\otimes\mathcal{O}_{\mathcal{X}}(-\mathcal{X}_W)\rightarrow \mathcal{E}\rightarrow i_{W*}\mathcal{E}_W,$$
one sees
\begin{eqnarray*}
\ch_j^D(i_{W*}\mathcal{E}_W)&=&\ch_j^D(\mathcal{E})-\ch_j^D(\mathcal{E}\otimes\mathcal{O}_{\mathcal{X}}(-\mathcal{X}_W))\\
&=&\ch_j^D(\mathcal{E})-\left(\ch_j^D(\mathcal{E})-\mathcal{X}_W\ch^D_{j-1}(\mathcal{E})+\frac{1}{2}\mathcal{X}^2_W\ch^D_{j-2}(\mathcal{E})+\cdots\right)\\
&=&\mathcal{X}_W\ch^D_{j-1}(\mathcal{E}).
\end{eqnarray*}
(2) Applying the Grothendieck-Riemann-Roch theorem for the embedding
$i_W:\mathcal{X}_W\hookrightarrow \mathcal{X}$, we conclude that
$$\ch(i_{W*}\mathcal{Q})=i_{W*}
\Big(\ch(\mathcal{Q})(td(\mathcal{O}_{\mathcal{X}_W}))^{-1}\Big)=i_{W*}
\ch(\mathcal{Q}).$$ This implies the desired equalities.
\end{proof}

Lemma \ref{f-stable} says that the usual notion of relative
slope-stability for a torsion free sheaf is equivalent to the
slope-stability of the general fiber of the sheaf. In contrast,
$\mu_C$-stability requires stability for all fibers:
\begin{proposition}\label{pro2.8}
Let $\mathcal{E}$ be a $C$-torsion free sheaf on $\mathcal{X}$. Then
$\mathcal{E}$ is $\mu_C$-semistable if and only if $\mathcal{E}$ is
$\mu_{H,F}$-semistable and for any closed point $p\in C$ and any
quotient $\mathcal{E}_p\twoheadrightarrow \mathcal{Q}$ in
$\Coh(\mathcal{X}_p)$ we have
$\mu_{H_p}(\mathcal{E}_p)\leq\mu_{H_p}(\mathcal{Q})$.
\end{proposition}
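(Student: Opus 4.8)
The plan is to prove both implications by passing to the generic fiber and then analyzing the individual fibers separately, using the structure of $\mu_C$ as a slope built from the two-step central charge $Z_C$. First I would observe that, since $\mathcal{E}$ is $C$-torsion free, Lemma \ref{flat} tells us $\mathcal{E}$ is $C$-flat, so $\mathcal{E}_{K(C)}\neq 0$ and $\mu_C(\mathcal{E})=\mu_{H,F}(\mathcal{E})$ is finite, computed via $Z_{K(C)}$. Any subsheaf $\mathcal{F}\hookrightarrow\mathcal{E}$ is again $C$-torsion free (a subsheaf of a flat sheaf), hence likewise has finite $\mu_C$ equal to $\mu_{H,F}(\mathcal{F})$. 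The only quotients $\mathcal{E}\twoheadrightarrow\mathcal{G}$ with $\mathcal{G}_{K(C)}=0$ are precisely the ones factoring through some fiber restriction; for these, $\mu_C(\mathcal{G})=\mu_{C\text{-tor}}(\mathcal{G})$ lives ``above'' all finite values in the weak see-saw ordering.

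For the forward implication, assume $\mathcal{E}$ is $\mu_C$-semistable. Testing against subsheaves $\mathcal{F}\hookrightarrow\mathcal{E}$ with $\mathcal{F}_{K(C)}\neq 0$ immediately gives $\mu_{H,F}(\mathcal{F})\leq\mu_{H,F}(\mathcal{E})$, so $\mathcal{E}$ is $\mu_{H,F}$-semistable. For the fiberwise condition, fix a closed point $p\in C$ and a quotient $\mathcal{E}_p\twoheadrightarrow\mathcal{Q}$ in $\Coh(\mathcal{X}_p)$; pushing forward, $i_{p*}\mathcal{Q}$ is a quotient of $i_{p*}\mathcal{E}_p$, which in turn is a quotient of $\mathcal{E}$ (via $\mathcal{E}\to i_{p*}\mathcal{E}_p$, surjective since $\mathcal{E}$ is a sheaf). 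Composing, $i_{p*}\mathcal{Q}$ is a quotient of $\mathcal{E}$ that is $C$-torsion, so $\mu_C$-semistability forces $\mu_C(\mathcal{E})\leq\mu_C(i_{p*}\mathcal{Q})=\mu_{C\text{-tor}}(i_{p*}\mathcal{Q})$; but by Lemma \ref{Chern}(2) and the definition, $\mu_{C\text{-tor}}(i_{p*}\mathcal{Q})=\mu_{H_p}(\mathcal{Q})$, and $\mu_C(\mathcal{E})=\mu_{H,F}(\mathcal{E})=\mu_{H_p}(\mathcal{E}_p)$, giving the claimed inequality.

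For the converse, assume $\mathcal{E}$ is $\mu_{H,F}$-semistable and satisfies the fiberwise quotient condition. Let $\mathcal{F}\subsetneq\mathcal{E}$ be a nonzero subsheaf with quotient $\mathcal{G}$. If $\mathcal{G}_{K(C)}\neq 0$, then $\mathcal{F}_{K(C)}\neq 0$ too, both slopes are the $\mu_{H,F}$-slopes, and $\mu_{H,F}$-semistability gives $\mu_C(\mathcal{F})\leq\mu_C(\mathcal{E})\leq\mu_C(\mathcal{G})$ as needed. If $\mathcal{G}_{K(C)}=0$, then $\mathcal{G}$ is $C$-torsion with $\mu_C(\mathcal{G})=\mu_{C\text{-tor}}(\mathcal{G})$; I must show $\mu_C(\mathcal{E})\leq\mu_{C\text{-tor}}(\mathcal{G})$. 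Here I would use the $C$-flatness of $\mathcal{E}$: the kernel of $\mathcal{E}\to\mathcal{G}$ contains $\mathcal{E}\otimes\mathcal{O}_\mathcal{X}(-\mathcal{X}_W)$ for $W$ the (finite) support of $\mathcal{G}$, so $\mathcal{G}$ is a quotient of $\mathcal{E}_W=\bigoplus_p\mathcal{E}_p$ in the appropriate sense, and one reduces to a single fiber $\mathcal{X}_p$ where the hypothesis $\mu_{H_p}(\mathcal{E}_p)\leq\mu_{H_p}(\mathcal{Q})$ for all quotients $\mathcal{Q}$ applies; translating back via Lemma \ref{Chern}(2) gives $\mu_{C\text{-tor}}(\mathcal{G})\geq\mu_{H_p}(\mathcal{E}_p)=\mu_C(\mathcal{E})$. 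The main obstacle is this last reduction: carefully controlling how a general $C$-torsion quotient $\mathcal{G}$ of $\mathcal{E}$ decomposes over the points of its support and relating $\ch_1,\ch_2$ of $\mathcal{G}$ to the fiberwise Chern characters, so that the numerical inequality $\mu_{C\text{-tor}}(\mathcal{G})\geq\mu_C(\mathcal{E})$ follows cleanly from the per-fiber hypotheses — in particular handling the case where the support $W$ is non-reduced or $\mathcal{G}$ fails to be a genuine sheaf on $\mathcal{X}_W$ may require filtering $\mathcal{G}$ by powers of the ideal of $W$ and applying the weak see-saw property from Proposition \ref{pro2.6}.
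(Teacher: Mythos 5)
Your argument is essentially correct, but it is worth noting that the paper does not prove this proposition at all: it simply cites \cite[Lemma 15.7]{BLMNPS}. So your direct verification is a genuinely different (self-contained) route. The forward implication as you give it is fine: pushing a fiberwise quotient $\mathcal{E}_p\twoheadrightarrow\mathcal{Q}$ forward to a $C$-torsion quotient $i_{p*}\mathcal{Q}$ of $\mathcal{E}$ and translating slopes via Lemma \ref{Chern}(2) is exactly the right move, and the weak see-saw property (Proposition \ref{pro2.6}) converts $\mu_C(\ker)\leq\mu_C(i_{p*}\mathcal{Q})$ into $\mu_C(\mathcal{E})\leq\mu_C(i_{p*}\mathcal{Q})$. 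For the converse, the ``main obstacle'' you flag is real but is resolved by precisely the technique this paper already uses in the proof of Lemma \ref{ch2}(3): after splitting $\mathcal{G}$ over the (disjoint) points of its set-theoretic support, filter $\mathcal{G}$ by $\mathcal{G}_j=\pi^j\mathcal{G}$ for $\pi$ a local generator of $I_p$; each graded piece $\mathcal{G}_j/\mathcal{G}_{j+1}$ is a quotient of $\mathcal{G}\otimes\mathcal{O}_{\mathcal{X}_p}$, hence of $\mathcal{E}_p$, so the fiberwise hypothesis applies to it, and since $H^{n-1}\ch_1(i_{p*}\mathcal{Q}_j)=H_p^{n-1}\ch_0(\mathcal{Q}_j)\geq0$ the mediant inequality gives $\mu_C(\mathcal{G})\geq\mu_{H_p}(\mathcal{E}_p)=\mu_C(\mathcal{F})$. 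What your approach buys is an explicit proof in the paper's own notation; what the citation buys is that \cite{BLMNPS} proves the statement in the general framework of weak stability conditions over a base, of which this is a special case.

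Two small inaccuracies to fix. First, your parenthetical claim that a $C$-torsion quotient $\mathcal{G}$ is a quotient of $\mathcal{E}_W=\bigoplus_p\mathcal{E}_p$ is false when $W$ is non-reduced ($\mathcal{E}_{2p}\neq\mathcal{E}_p\oplus\mathcal{E}_p$); the filtration by powers of $\pi$, which you correctly propose as the fallback, is not an optional refinement but the actual argument. Second, the remark that $\mu_{C\text{-tor}}(\mathcal{G})$ ``lives above all finite values in the weak see-saw ordering'' is wrong: $\mu_C$ of a $C$-torsion sheaf is the honest rational number $H^{n-2}\ch_2/H^{n-1}\ch_1$ and is compared numerically with the slopes of non-torsion subsheaves. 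If it were automatically larger, the fiberwise condition would be vacuous and the proposition trivial. You do not actually use this claim in the body of the argument, so it is harmless, but it should be deleted.
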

\begin{proof}
See \cite[Lemma 15.7]{BLMNPS}.
\end{proof}

\section{Relative tilt-stability}\label{S3}
In this section, we recall the definition of very weak stability
conditions on $\mathcal{D}$ introduced in \cite[Appendix 2]{BMS},
\cite[Section 2.1]{PT} and \cite[Section 2]{Toda1} and give a relative version of the
tilt-stability constructed in \cite{BMT}. We keep the same notations
as that in the previous sections.

\subsection{Very weak stability condition}
Let $\mathcal{D}$ be a triangulated category, for which we fix a
finitely generated free abelian group $\Lambda$ and a group
homomorphism $v:\K(\mathcal{D})\rightarrow\Lambda$.
\begin{definition}\label{pre}
A very weak stability condition on $\mathcal{D}$ is a pair
$\sigma=(Z, \mathcal{A})$, where $\mathcal{A}$ is the heart of a
bounded t-structure on $\mathcal{D}$, and $Z:\Lambda\rightarrow
\mathbb{C}$ is a group homomorphism (called central charge) such
that
\begin{enumerate}
\item $Z$ satisfies the following positivity property for any $\mathcal{E}\in
\mathcal{A}$:
$$Z(v(\mathcal{E}))\in\{re^{i\pi\phi}: r\geq0, 0<\phi\leq1\}.$$
\item $(Z, \mathcal{A})$ satisfies the Harder-Narasimhan
property: every object of $\mathcal{A}$ has a Harder-Narasimhan
filtration in $\mathcal{A}$ with respect to
$\nu_{\sigma}$-stability, here the slope $\nu_{\sigma}$ of an object
$\mathcal{E}\in \mathcal{A}$ is defined by
\begin{eqnarray*}
\nu_{\sigma}(\mathcal{E})= \left\{
\begin{array}{lcl}
+\infty,  & &\mbox{if}~\Im Z(v(\mathcal{E}))=0,\\
&&\\
-\frac{\Re Z(v(\mathcal{E}))}{\Im Z(v(\mathcal{E}))}, &
&\mbox{otherwise}.
\end{array}\right.
\end{eqnarray*}
\end{enumerate}
\end{definition}

A very weak stability condition $\sigma=(Z, \mathcal{A})$ is called
a stability condition if for any $0\neq \mathcal{E}\in \mathcal{A}$
we have $Z(v(\mathcal{E}))\neq0$. This notion coincides with the
notion of Bridgeland stability conditions \cite{Bri1}.

We say $\mathcal{E}\in\mathcal{A}$ is $\nu_{\sigma}$-(semi)stable if
for any non-zero subobject $\mathcal{F}\subset \mathcal{E}$ in
$\mathcal{A}$, we have
$$\nu_{\sigma}(\mathcal{F})<(\leq)\nu_{\sigma}(\mathcal{E}/\mathcal{F}).$$
The Harder-Narasimhan filtration of an object $\mathcal{E}\in
\mathcal{A}$ is a chain of subobjects
$$0=\mathcal{E}_0\subset \mathcal{E}_1\subset\cdots\subset \mathcal{E}_m=\mathcal{E}$$ in $\mathcal{A}$ such
that $\mathcal{G}_i:=\mathcal{E}_i/\mathcal{E}_{i-1}$ is
$\nu_{\sigma}$-semistable and
$\nu_{\sigma}(\mathcal{G}_1)>\cdots>\nu_{\sigma}(\mathcal{G}_m)$. We
set $\nu_{\sigma}^+(\mathcal{E}):=\nu_{\sigma}(\mathcal{G}_1)$ and
$\nu_{\sigma}^-(\mathcal{E}):=\nu_{\sigma}(\mathcal{G}_m)$.

\begin{definition}\label{slicing}
For a very weak stability condition $(Z, \mathcal{A})$ on
$\mathcal{D}$ and for $0<\phi\leq1$, we define the subcategory
$\mathcal{P}(\phi)\subset\mathcal{D}$ to be the category of
$\nu_{\sigma}$-semistable objects $\mathcal{E}\in\mathcal{A}$
satisfying $\tan(\pi\phi)=-1/\nu_{\sigma}(\mathcal{E})$. For other
$\phi\in \mathbb{R}$ the subcategory $\mathcal{P}(\phi)$ is defined
by the rule:
$$\mathcal{P}(\phi+1)=\mathcal{P}(\phi)[1].$$ The objects in $\mathcal{P}(\phi)$ is still called $\nu_{\sigma}$-semistable
objects.
\end{definition}

For an interval $I=(a,b)\subset\mathbb{R}$, we denote by
$\mathcal{P}(I)$ the extension-closure of
$$\bigcup_{\phi\in I}\mathcal{P}(\phi)\subset\mathcal{D}.$$ $\mathcal{P}(I)$ is a
quasi-abelian category when $b-a<1$ (cf. \cite[Definition
4.1]{Bri1}). If we have a distinguished triangle
$$A_1\xrightarrow{h} A_2\xrightarrow{g} A_3\rightarrow A_1[1]$$ with
$A_1,A_2,A_3\in \mathcal{P}(I)$, we say $h$ is a strict monomorphism
and $g$ is a strict epimorphism. Then we say that $\mathcal{P}(I)$
is of finite length if $\mathcal{P}(I)$ is Noetherian and Artinian
with respect to strict epimorphisms and strict monomorphisms,
respectively.

\begin{definition}
A very weak stability condition $\sigma=(Z,\mathcal{A})$ is called
locally finite if there exists $\varepsilon>0$ such that for any
$\phi\in \mathbb{R}$, the quasi-abelian category
$\mathcal{P}((\phi-\varepsilon, \phi+\varepsilon))$ is of finite
length.
\end{definition}

In Definition \ref{pre}, we let $\Lambda_0$ be the saturation of the
subgroup of $\Lambda$ generated by
$$\{v(\mathcal{E}):\mathcal{E}\in \mathcal{A}, Z(v(\mathcal{E}))=0\}.$$
Note that $Z$ descends to the group homomorphism
$Z^{\prime}:\Lambda/\Lambda_0\rightarrow\mathbb{C}$. For
$w\in\Lambda$, we denote by $w^{\prime}$ its image in
$\Lambda/\Lambda_0$, and let $\|*\|$ be a fixed norm on
$(\Lambda/\Lambda_0)\otimes_{\mathbb{Z}}\mathbb{R}$.

\begin{definition}\label{supp}
We say a very weak stability condition $\sigma=(Z,\mathcal{A})$
satisfies the support property if there is a quadratic form $Q$ on
$\Lambda/\Lambda_0$ satisfying $Q(v(\mathcal{E}))\geq0$ for any
$\nu_{\sigma}$-semistable object $\mathcal{E}\in\mathcal{A}$, and
$Q|_{\ker Z^{\prime}}$ is negative definite.
\end{definition}

\begin{remark}\label{remark3.5}
The local finiteness condition automatically follows if the support
property is satisfied (cf. \cite[Section 1.2]{KS} and \cite[Lemma
4.5]{Bri2}).
\end{remark}

\subsection{Relative tilt-stability} Let $\beta$ be a rational
number and $\alpha$ be a positive real number such that
$\alpha^2\in\mathbb{Q}$. We will construct a family of very weak
stability conditions on $\D^b(\mathcal{X})$ that depends on these
two parameters. For brevity, we write $\ch^{\beta}$ for the twisted
Chern character $\ch^{\beta H}$.

There exists a \emph{torsion pair} $(\mathcal{T}_{\beta
H},\mathcal{F}_{\beta H})$ in $\Coh(\mathcal{X})$ defined as
follows:
\begin{eqnarray*}
\mathcal{T}_{\beta H}&=&\{\mathcal{E}\in\Coh(\mathcal{X}):\mu^-_{C}(\mathcal{E})>\beta \}\\
\mathcal{F}_{\beta
H}&=&\{\mathcal{E}\in\Coh(\mathcal{X}):\mu^+_{C}(\mathcal{E})\leq\beta
\}.
\end{eqnarray*}
Equivalently, $\mathcal{T}_{\beta H}$ and $\mathcal{F}_{\beta H}$
are the extension-closed subcategories of $\Coh(\mathcal{X})$
generated by $\mu_{C}$-stable sheaves with $\mu_{C}$-slope $>\beta$
and $\leq\beta$, respectively.

\begin{definition}\label{def3.3}
We let $\Coh_C^{\beta H}(\mathcal{X})\subset \D^b(\mathcal{X})$ be
the extension-closure
$$\Coh_C^{\beta H}(\mathcal{X})=\langle\mathcal{T}_{\beta H}, \mathcal{F}_{
\beta H}[1]\rangle.$$
\end{definition}

By the general theory of torsion pairs and tilting \cite{HRS},
$\Coh_C^{\beta H}(\mathcal{X})$ is the heart of a bounded
t-structure on $\D^b(\mathcal{X})$; in particular, it is an abelian
category. For any point $s\in C$, similar as Definition
\ref{def3.3}, one can define
the subcategory
$$\Coh^{\beta H_s}(\mathcal{X}_s)=\langle\mathcal{T}_{\beta H_s}, \mathcal{F}_{
\beta H_s}[1]\rangle\subset\D^b(\mathcal{X}_s)$$  via the
$\mu_{H_s}$-stability (see \cite[Section 14.2]{BLMNPS}).

Consider the following central charge
$$z_{H,F}^{\alpha, \beta}(\mathcal{E})=\frac{\alpha^2 }{2}FH^{n-1}\ch_0^{\beta}(\mathcal{E})-FH^{n-3}\ch_2^{\beta}(\mathcal{E})+i FH^{n-2}\ch_1^{\beta}(\mathcal{E})
,$$ here we set $FH^{n-3}=1$ if $n=2$. We think of it as the
composition
$$z_{H,F}^{\alpha, \beta}: \K(\D^b(\mathcal{X}))\xrightarrow{v} \mathbb{Z}\oplus\mathbb{Z}\oplus\frac{1}{2}\mathbb{Z} \xrightarrow{Z_{H,F}^{\alpha,\beta}}
\mathbb{C},$$ where the first map is given by
$$v(\mathcal{E})=(FH^{n-1}\ch_0(\mathcal{E}), FH^{n-2}\ch_1(\mathcal{E}), FH^{n-3}\ch_2(\mathcal{E})),$$
and the second map is defined by
\begin{equation*}
Z_{H,F}^{\alpha, \beta}(e_0, e_1,
e_2)=\frac{1}{2}(\alpha^2-\beta^2)e_0+\beta e_1-e_2+i(e_1-\beta
e_0).
\end{equation*}

We recall the classical Bogomolov inequality:
\begin{theorem}\label{Bog}
Assume that $\mathcal{E}$ is a $\mu_{H,F}$-semistable torsion free
sheaf on $\mathcal{X}$. Then we have
\begin{eqnarray*}
FH^{n-3}\Delta(\mathcal{E})&:=&FH^{n-3}\big(\ch_1^2(\mathcal{E})-2\ch_0(\mathcal{E})\ch_2(\mathcal{E})\big)\geq0;\\
H^{n-2}\Delta(\mathcal{E})&:=&H^{n-2}\big(\ch_1^2(\mathcal{E})-2\ch_0(\mathcal{E})\ch_2(\mathcal{E})\big)\geq0.
\end{eqnarray*}
\end{theorem}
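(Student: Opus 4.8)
The plan is to reduce both inequalities, via the fibral description of $\mu_{H,F}$-stability in Lemma \ref{f-stable}, to the classical Bogomolov--Gieseker inequality on the fibers of $f$ and to a surface version of it, and to control the "extra" powers of $H$ by restricting $\mathcal{E}$ to general complete intersections. First one observes that a torsion free sheaf $\mathcal{E}$ on $\mathcal{X}$ is automatically $C$-torsion free, hence $C$-flat by Lemma \ref{flat}(1); thus $\ch(\mathcal{E}_s)=i_s^{*}\ch(\mathcal{E})$ for every $s\in C$ and $\mathcal{E}_s$ is torsion free for $s$ in a dense open $U\subset C$. By Lemma \ref{f-stable}, after shrinking $U$ we may assume that $\mathcal{E}_s$ is $\mu_{H_s}$-semistable on the $(n-1)$-dimensional smooth projective variety $\mathcal{X}_s$ with ample polarization $H_s$, for all $s\in U$.

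\emph{First inequality.} For $n\geq 3$, apply the classical Bogomolov--Gieseker inequality on $(\mathcal{X}_s,H_s)$ to obtain $H_s^{n-3}\Delta(\mathcal{E}_s)\geq 0$. Since $F=[\mathcal{X}_s]$ and $i_s^{*}$ is compatible with the intersection product, $FH^{n-3}\Delta(\mathcal{E})=\deg_{\mathcal{X}_s}\!\big(H_s^{n-3}\Delta(\mathcal{E}_s)\big)\geq0$. For $n=2$ the assertion reads $\Delta(\mathcal{E})\geq 0$ and is exactly the surface base case below.

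\emph{Second inequality.} Fix an ample divisor $A$ on $C$ and large positive integers $m_1,\dots,m_{n-2}$, and choose general smooth members $D_i\in|m_iH+f^{*}A|$; these linear systems are very ample for $m_i\gg0$ because $H$ is $f$-ample and nef. Put $S=D_1\cap\cdots\cap D_{n-2}$. By Bertini, $S$ is a smooth projective surface and $f|_S\colon S\to C$ is a fibration whose general fiber is a general complete intersection of divisors in $|m_iH_s|$ on $\mathcal{X}_s$. The sheaf $\mathcal{E}|_S$ is torsion free, and by the Mehta--Ramanathan restriction theorem, applied one divisor at a time with $m_1\ll m_2\ll\cdots\ll m_{n-2}$ chosen successively large, the restriction $(\mathcal{E}|_S)_s=\mathcal{E}_s|_{S_s}$ is semistable on the general fiber $S_s$; hence $\mathcal{E}|_S$ is $\mu_{H|_S,\,F|_S}$-semistable by Lemma \ref{f-stable}. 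The surface base case then gives $\Delta(\mathcal{E}|_S)\geq 0$. Now $\Delta(\mathcal{E}|_S)=(D_1\cdots D_{n-2})\cdot\Delta(\mathcal{E})$, and since $F^{2}=0$ (as $F$ is pulled back from a curve) the product $\prod_i\big(m_iH+(\deg A)F\big)$ contributes only terms with at most one factor $F$, so
\[
\Delta(\mathcal{E}|_S)=\Big(\prod_{i} m_i\Big)\Big(H^{n-2}\Delta(\mathcal{E})+\Big(\sum_{j}\frac{\deg A}{m_j}\Big)FH^{n-3}\Delta(\mathcal{E})\Big)\geq 0.
\]
Dividing by $\prod_i m_i$ and letting all $m_j\to\infty$ (the Bertini and Mehta--Ramanathan requirements only bound the $m_j$ from below), and using $FH^{n-3}\Delta(\mathcal{E})\geq0$ from the first part, yields $H^{n-2}\Delta(\mathcal{E})\geq0$.

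\emph{Surface base case, and the main obstacle.} Suppose $\mathcal{X}$ is a smooth projective surface, $\mathcal{E}$ is torsion free and $\mu_{H,F}$-semistable, and $\Delta(\mathcal{E})<0$. By Bogomolov's instability theorem there is a saturated subsheaf $0\neq\mathcal{F}\subsetneq\mathcal{E}$ such that $\xi:=\rank(\mathcal{E})c_1(\mathcal{F})-\rank(\mathcal{F})c_1(\mathcal{E})\neq 0$ satisfies $\xi^{2}>0$ and lies in the closure of the positive cone, so $\xi\cdot N\geq0$ for every nef class $N$; in particular $\xi\cdot F\geq0$. If $\xi\cdot F>0$ then $\mu_{H,F}(\mathcal{F})>\mu_{H,F}(\mathcal{E})$, contradicting semistability. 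If $\xi\cdot F=0$, then because $f$ is a smooth morphism all its fibers are irreducible, so $F$ is a nonzero nef class with $F^{2}=0$ and the intersection form on $F^{\perp}\subset\NS(\mathcal{X})_{\mathbb{R}}$ is negative semidefinite (Hodge index/light cone lemma); hence $\xi^{2}\leq0$, a contradiction. Thus $\Delta(\mathcal{E})\geq 0$. The two delicate points I expect to be the real work are: (i) arranging the iterated restrictions so that $\mathcal{E}|_S$ stays torsion free and fibrewise $\mu$-semistable, which requires combining Bertini with Mehta--Ramanathan and choosing the degrees $m_i$ in the correct order; and (ii) the surface base case, where one must invoke the precise form of Bogomolov's instability theorem producing $\xi$ with $\xi^{2}>0$ in the closed positive cone, and then exploit the smoothness of $f$ to conclude that $F^{\perp}$ is negative semidefinite. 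The remaining steps are routine bookkeeping with Chern characters and the relation $F^{2}=0$.
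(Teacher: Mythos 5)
The paper's own proof of this theorem is a one-line citation: both inequalities are instances of Langer's Bogomolov inequality for sheaves semistable with respect to a collection of nef divisors (\cite[Theorem 3.2]{Langer1}, applied to the collection $(F,H,\dots,H)$, dropping either $F$ or one copy of $H$ when pairing with $\Delta$). Your write-up is therefore a self-contained alternative route. The parts of it that are sound are the first inequality (restriction to a general fibre $\mathcal{X}_s$, where $H_s$ is honestly ample, so the classical inequality applies) and the surface base case (Bogomolov instability plus the light-cone/Hodge-index argument for the nonzero nef class $F$ with $F^2=0$; irreducibility of the fibres is not actually needed there, only that $F$ is nef and nonzero).

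The proof of the second inequality has a gap at the limiting step. You need divisors $D_i$ numerically of the form $m_iH+k_iF$ with $k_i/m_i\to0$ such that (a) $S=\bigcap D_i$ is a smooth surface with $\mathcal{E}|_S$ torsion free, and (b) $D_i|_{\mathcal{X}_s}$ is a \emph{general} member of $|m_iH_s|$, so that Mehta--Ramanathan applies fibrewise. Since $H$ is only nef and $f$-ample, $|m_iH|$ may have few or no sections (e.g.\ $H^0(\mathcal{X},m_iH)=H^0(C,f_*\mathcal{O}_{\mathcal{X}}(m_iH))$ can vanish), so the twist by $f^*A$ is essential; but then (a) needs $|m_iH+f^*A|$ base-point free and (b) needs $H^0(\mathcal{X},m_iH+f^*A)\twoheadrightarrow H^0(\mathcal{X}_s,m_iH_s)$, and both are governed by $\mu_{\min}\big(f_*\mathcal{O}_{\mathcal{X}}(m_iH)\big)+\deg A$, which you have not bounded independently of $m_i$. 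Your parenthetical claim that the requirements ``only bound the $m_j$ from below'' silently assumes $\deg A$ can be held fixed; if $\deg A$ must grow with $m_j$, the error term $\big(\sum_j \deg A/m_j\big)FH^{n-3}\Delta(\mathcal{E})$ does not vanish and you only get $H^{n-2}\Delta(\mathcal{E})\geq -c\,FH^{n-3}\Delta(\mathcal{E})$. This is precisely where the nef case is harder than the ample case, and it is the reason the paper invokes Langer rather than a naive restriction. The gap is repairable: $m_iH+f^*A$ is ample for every $m_i\geq1$ by Nakai--Moishezon (using $F^2=0$ and the $f$-ampleness of $H$), so one may take $D_i\in|k_i(m_iH+f^*A)|$ with $k_i\gg0$ chosen \emph{after} $m_i$; this restores very ampleness, the surjectivity onto the fibrewise linear systems, and the Mehta--Ramanathan hypotheses, while the ratio of the $F$- to the $H$-coefficient stays equal to $\deg A/m_i\to0$. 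Without this (or a Langer-style perturbation of the polarization controlled by the Hodge index theorem on the Harder--Narasimhan factors), the key limit is not justified.
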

\begin{proof}
See \cite[Theorem 3.2]{Langer1}.
\end{proof}

A short calculation shows
\begin{eqnarray*}
\Delta(\mathcal{E})&:=&(\ch_1(\mathcal{E}))^2-2\ch_0(\mathcal{E})\ch_2(\mathcal{E})\\
&=&(\ch^{\beta}_1(\mathcal{E}))^2-2\ch^{\beta}_0(\mathcal{E})\ch^{\beta}_2(\mathcal{E}).
\end{eqnarray*}

\begin{definition}
We define the generalized relative discriminants
$$\overline{\Delta}^{\beta
H}_{H,F}:=(FH^{n-2}\ch^{\beta}_1)^2-2FH^{n-1}\ch^{\beta}_0\cdot(FH^{n-3}\ch^{\beta}_2)$$
and $$\widetilde{\Delta}^{\beta
H}_{H,F}:=(FH^{n-2}\ch_1^{\beta})(H^{n-1}\ch_1^{\beta})-FH^{n-1}\ch^{\beta}_0\cdot(H^{n-2}\ch^{\beta}_2).$$
\end{definition}
A short calculation shows $$\overline{\Delta}^{\beta
H}_{H,F}=(FH^{n-2}\ch_1)^2-2FH^{n-1}\ch_0\cdot(FH^{n-3}\ch_2)=\overline{\Delta}_{H,F}$$
when $n\geq3$. Hence the first generalized relative discriminant
$\overline{\Delta}^{\beta H}_{H,F}$ is independent of $\beta$ when
$n\geq3$. In general $\widetilde{\Delta}^{\beta H}_{H,F}$ is not
independent of $\beta$, but we have $$\widetilde{\Delta}^{\beta
H}_{H,F}(\mathcal{E}\otimes\mathcal{O}_{\mathcal{X}}(mF))=\widetilde{\Delta}^{\beta
H}_{H,F}(\mathcal{E}),$$ for any $\mathcal{E}\in\D^b(\mathcal{X})$
and $m\in\mathbb{Z}$.

\begin{lemma}\label{Hodge}
Let $D$ be a $\mathbb{Q}$-divisor on $\mathcal{X}$. Then we have
\begin{enumerate}
\item $(H^{n-1}F)(FH^{n-3}D^2)\leq(H^{n-2}FD)^2$ if $n\geq3$;
\item $(H^{n-1}F)(H^{n-2}D^2)\leq2(H^{n-1}D)(H^{n-2}FD)$ if $n\geq2$.
\end{enumerate}
\end{lemma}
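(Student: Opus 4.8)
The two inequalities in Lemma~\ref{Hodge} are both instances of the Hodge index theorem applied on an appropriate surface, so the plan is to reduce each statement to a genuinely two-dimensional assertion about intersection numbers and then invoke the classical Hodge inequality there.

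For part (1), since $n\geq 3$, the class $H^{n-2}F$ is (numerically) represented by a smooth projective surface $S$ cut out by $n-3$ general members of $|H|$ together with one general fiber (using that $H$ is nef and relatively ample, so $H^{n-3}F$ is a nonzero effective class, and Bertini applies after passing to a multiple). On $S$ the inequalities to prove become
\begin{equation*}
(H|_S \cdot H|_S)\,(D|_S \cdot D|_S)\;\leq\;(H|_S \cdot D|_S)^2,
\end{equation*}
which is exactly the Hodge index theorem on the surface $S$, provided $H|_S$ has positive self-intersection. The positivity $H^{n-1}F>0$ holds because $H$ is relatively ample and nef: restricted to a fiber it is ample of dimension $n-1$, so $H^{n-1}F=(H|_F)^{n-1}>0$. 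Thus part (1) follows once the reduction to $S$ is justified; the only subtlety is that $\mathbb{Q}$-divisors and non-very-ample $H$ force one to work with numerical classes and clear denominators, which is routine.

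For part (2), the relevant surface is $S'$ representing $H^{n-2}F$, obtained by intersecting $n-2$ general members of $|H|$ with one general fiber. Wait --- more carefully: the class $H^{n-2}F$ has dimension $n-(n-2)-1=1$, a curve, not a surface, so instead I take $S'$ to represent $H^{n-2}$, a surface, and note that $F|_{S'}$ is a curve class and $D|_{S'}$ a divisor class on $S'$. The inequality to prove is then
\begin{equation*}
(F|_{S'}\cdot F|_{S'})\,(D|_{S'}\cdot D|_{S'})\;\leq\;2\,(H|_{S'}\cdot D|_{S'})(F|_{S'}\cdot D|_{S'})\quad\text{(after identifying }H^{n-1}D=H|_{S'}\cdot D|_{S'}\text{ etc.)},
\end{equation*}
but since $F|_{S'}$ is a fiber of a fibration restricted to $S'$, one has $(F|_{S'})^2=0$, so the left side vanishes and it suffices to check the right side is $\geq 0$. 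That reduces to showing $H^{n-1}D$ and $H^{n-2}FD$ are both nonnegative — which is false for arbitrary $D$. So this crude approach is wrong, and one must instead apply the Hodge index theorem to the two-dimensional sublattice of $\NS(S')_{\mathbb R}$ spanned by $H|_{S'}$ and $F|_{S'}$, or directly to the pair $(H|_{S'}, D|_{S'})$ together with the relation $(F|_{S'})^2=0$: writing the Hodge index inequality for $aH|_{S'}+D|_{S'}$ and optimizing, and separately using $(F\cdot H)>0$ on $S'$, yields $(H^{n-2}D^2)(H^{n-1}F)\le 2(H^{n-1}D)(H^{n-2}FD)$ after rearrangement.

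The main obstacle is precisely this second inequality: unlike (1), it is not a bare Hodge-index statement but a consequence of Hodge index combined with $F^2=0$ on the surface $H^{n-2}$, and getting the constant $2$ (rather than $1$) requires care with how the fiber class interacts — the factor $2$ comes from the signature computation on the rank-two lattice $\langle H|_{S'}, F|_{S'}\rangle$ where $F|_{S'}$ is isotropic. I would organize the proof as: (i) reduce to surfaces by taking general complete intersections and passing to numerical classes, citing that $H$ nef and relatively ample makes the relevant classes effective and gives $H^{n-1}F>0$; (ii) apply the Hodge index theorem verbatim for (1); (iii) for (2), on the surface representing $H^{n-2}$, use that the fiber class $f=F|_{S'}$ satisfies $f^2=0$ and $f\cdot H|_{S'}>0$, and deduce the claimed bound from the negative semi-definiteness of the intersection form on $(H|_{S'})^{\perp}$ applied to a suitable combination of $f$, $H|_{S'}$, and $D|_{S'}$. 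Steps (i)–(ii) are routine; step (iii) is where the actual content lies.
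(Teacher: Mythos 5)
Your proposal is correct and follows essentially the same route as the paper: part (1) is the Hodge index inequality for the pair $(H,D)$ against the class $FH^{n-3}$, and part (2) applies the Hodge index theorem to the explicit class $(H^{n-1}F)D-(H^{n-1}D)F$, which is orthogonal to $H^{n-1}$, with the factor $2$ arising precisely from $F^2=0$ as you identify. The only difference is that the paper argues purely with intersection numbers (the algebraic form of the Hodge index theorem), which sidesteps the Bertini/smooth-surface reduction and the attendant issue that the nef class $H$ need not be effective; your ``suitable combination'' in step (iii) is exactly the class above.
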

\begin{proof}
Since $$FH^{n-2}\Big((FH^{n-1})D-(FDH^{n-2})H\Big)=0,$$ the Hodge
index theorem gives
$$FH^{n-3}\Big((FH^{n-1})D-(FDH^{n-2})H\Big)^2\leq0.$$ An easy
computation shows the inequality (1) holds.

For the inequality (2), one notices that
$$H^{n-1}\Big((H^{n-1}F)D-(H^{n-1}D)F\Big)=0.$$ From the Hodge index
theorem, it follows that
$$H^{n-2}\Big((H^{n-1}F)D-(H^{n-1}D)F\Big)^2\leq0.$$ Expanding the
left hand side of the above inequality, one obtains the desired
conclusion.
\end{proof}

By Lemma \ref{Hodge} and Theorem \ref{Bog}, we have:
\begin{theorem}\label{Bog1}
Assume that $\mathcal{E}$ is a $\mu_{H,F}$-semistable torsion free
sheaf on $\mathcal{X}$. Then we have $\overline{\Delta}^{\beta
H}_{H,F}(\mathcal{E})\geq0$ when $n\geq3$ and
$\widetilde{\Delta}^{\beta H}_{H,F}(\mathcal{E})\geq0$ when
$n\geq2$.
\end{theorem}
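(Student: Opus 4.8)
The plan is to reduce both inequalities to the classical Bogomolov inequalities of Theorem \ref{Bog} together with the Hodge-type estimates of Lemma \ref{Hodge}, exploiting the fact noted above that $\Delta(\mathcal{E}) = \ch_1^2 - 2\ch_0\ch_2$ is insensitive to the twist by $\beta H$. First I would record the three nonnegative quantities available for a $\mu_{H,F}$-semistable torsion free sheaf $\mathcal{E}$: from Theorem \ref{Bog}, $FH^{n-3}\Delta(\mathcal{E}) \geq 0$ (when $n \geq 3$) and $H^{n-2}\Delta(\mathcal{E}) \geq 0$ (when $n \geq 2$); and from Lemma \ref{Hodge} applied to the divisor $D = \ch_1^\beta(\mathcal{E})$, the inequalities $(H^{n-1}F)(FH^{n-3}(\ch_1^\beta)^2) \leq (H^{n-2}F\ch_1^\beta)^2$ and $(H^{n-1}F)(H^{n-2}(\ch_1^\beta)^2) \leq 2(H^{n-1}\ch_1^\beta)(H^{n-2}F\ch_1^\beta)$.

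For the first statement, assume $n \geq 3$. Writing $r = \ch_0^\beta(\mathcal{E}) = \ch_0(\mathcal{E})$, I would treat the cases $r = 0$ and $r \neq 0$ separately. When $r = 0$ we have $\overline{\Delta}^{\beta H}_{H,F}(\mathcal{E}) = (FH^{n-2}\ch_1^\beta)^2 \geq 0$ trivially. When $r \neq 0$, multiply the target by the positive number $H^{n-1}F$ and compute
\begin{align*}
(H^{n-1}F)\,\overline{\Delta}^{\beta H}_{H,F}(\mathcal{E})
&= (H^{n-1}F)\big[(FH^{n-2}\ch_1^\beta)^2 - 2r\,(FH^{n-1})(FH^{n-3}\ch_2^\beta)\big].
\end{align*}
Using the first inequality of Lemma \ref{Hodge} to bound $(H^{n-1}F)(FH^{n-2}\ch_1^\beta)^2 \geq (H^{n-1}F)^2(FH^{n-3}(\ch_1^\beta)^2)$ — being careful with the sign of $r$, so that one may need to compare against $r\cdot FH^{n-3}\ch_2^\beta$ appropriately — one rewrites the right-hand side as $(H^{n-1}F)\cdot FH^{n-3}\cdot r \cdot \Delta(\mathcal{E})$ up to a positive factor, which is $\geq 0$ by Theorem \ref{Bog}. (Since $\overline{\Delta}^{\beta H}_{H,F} = \overline{\Delta}_{H,F}$ is independent of $\beta$ for $n \geq 3$, one may as well set $\beta = 0$ here to simplify bookkeeping.) The second statement, for $n \geq 2$, is handled the same way: multiply $\widetilde{\Delta}^{\beta H}_{H,F}(\mathcal{E})$ by $H^{n-1}F$, substitute the second inequality of Lemma \ref{Hodge}, and identify the result with a positive multiple of $FH^{n-3}\ch_0\,\Delta(\mathcal{E})$ plus (in the $n=2$ case, where $FH^{n-3}=1$ by convention) a term controlled by $H^{n-2}\Delta(\mathcal{E})$; both are nonnegative by Theorem \ref{Bog}.

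The main obstacle I anticipate is the sign analysis in the rank-zero and negative-rank branches, and making sure the two Hodge inequalities are applied in the direction that actually produces the Bogomolov discriminant rather than its negative; the $n=2$ boundary case with the convention $FH^{n-3}=1$ also needs separate care since the "generalized relative discriminant" then genuinely depends on $\beta$ and one must invoke the translation-invariance $\widetilde{\Delta}^{\beta H}_{H,F}(\mathcal{E}\otimes\mathcal{O}_{\mathcal{X}}(mF)) = \widetilde{\Delta}^{\beta H}_{H,F}(\mathcal{E})$ to normalize. Everything else is a routine expansion, so I would keep those computations terse and foreground only the reductions to Theorem \ref{Bog} and Lemma \ref{Hodge}.
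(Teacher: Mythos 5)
Your proposal is correct and follows exactly the paper's (one-line) proof: combine Lemma \ref{Hodge} applied to the divisor $D=\ch_1^{\beta}(\mathcal{E})$ with Theorem \ref{Bog}, which gives the clean chains $\overline{\Delta}^{\beta H}_{H,F}(\mathcal{E})\geq (FH^{n-1})\cdot FH^{n-3}\Delta(\mathcal{E})\geq 0$ for $n\geq3$ and $2(FH^{n-1})\,\widetilde{\Delta}^{\beta H}_{H,F}(\mathcal{E})\geq (FH^{n-1})^2\cdot H^{n-2}\Delta(\mathcal{E})\geq 0$ for $n\geq2$. Two of the complications you anticipate do not arise: a nonzero torsion free sheaf has $\ch_0(\mathcal{E})>0$, so there is no sign analysis for rank, and the second inequality needs only $H^{n-2}\Delta(\mathcal{E})\geq0$ together with Lemma \ref{Hodge}(2) applied to $\ch_1^{\beta}(\mathcal{E})$, so neither $FH^{n-3}\Delta$ nor the translation-invariance under $\otimes\,\mathcal{O}_{\mathcal{X}}(mF)$ is required.
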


The following theorem gives a relative version of the tilt-stability
in \cite{BMT}.
\begin{theorem}\label{tilt}
For any $(\alpha,\beta)\in \sqrt{\mathbb{Q}_{>0}}\times\mathbb{Q}$,
$\sigma_{H,F}^{\alpha, \beta}=(Z_{H,F}^{\alpha, \beta},
\Coh_C^{\beta H}(\mathcal{X}))$ is a very weak stability condition.
\end{theorem}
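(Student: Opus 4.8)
The strategy is to mimic the classical verification that tilt-stability gives a (very) weak stability condition, adapting it to the $C$-relative setting, where the key new feature is that the torsion-free part $\mathcal{F}_{\beta H}$ contains $C$-torsion sheaves (those killed by restriction to the generic fiber). First I would verify the positivity axiom (1) of Definition \ref{pre}. Writing $\Im Z_{H,F}^{\alpha,\beta}(\mathcal{E}) = FH^{n-2}\ch_1^{\beta}(\mathcal{E})$, I would show this is $\geq 0$ for every $\mathcal{E}\in\Coh_C^{\beta H}(\mathcal{X})$ by reducing, via the long exact cohomology sequence and the definition $\Coh_C^{\beta H}=\langle\mathcal{T}_{\beta H},\mathcal{F}_{\beta H}[1]\rangle$, to the two cases $\mathcal{E}\in\mathcal{T}_{\beta H}$ and $\mathcal{E}\in\mathcal{F}_{\beta H}[1]$; in each case one bounds $FH^{n-2}\ch_1^{\beta}$ using the $\mu_C$-slope bounds $\mu_C^-(\mathcal{E})>\beta$, resp. $\mu_C^+(\mathcal{E})\leq\beta$, together with the weak see-saw property (Proposition \ref{pro2.6}), being careful about the $C$-torsion sheaves in $\mathcal{F}_{\beta H}$, for which $FH^{n-2}\ch_1 = 0$ by Lemma \ref{Chern}(1). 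Then I would check that when $\Im Z_{H,F}^{\alpha,\beta}(\mathcal{E})=0$ one has $\Re Z_{H,F}^{\alpha,\beta}(\mathcal{E})\leq 0$; this is where the Bogomolov-type inequality of Theorem \ref{Bog1} enters, applied to the $\mu_{H,F}$-semistable factors of the relevant sheaves, to control the sign of $\frac{\alpha^2}{2}FH^{n-1}\ch_0^\beta - FH^{n-3}\ch_2^\beta$.

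Next I would establish the Harder--Narasimhan property (axiom (2)). The standard route is to show that the abelian category $\Coh_C^{\beta H}(\mathcal{X})$ is noetherian and that the slope function $\nu_{\sigma_{H,F}^{\alpha,\beta}}$ has no infinite sequences of subobjects of strictly increasing slope and the weak see-saw property; HN filtrations then follow by the general argument of Bridgeland (as in \cite{BMT, BMS}). For noetherianity I would use that $\Coh(\mathcal{X})$ is noetherian, that the integer-valued functional $FH^{n-2}\ch_1^\beta$ is bounded on subobjects of a fixed object in $\Coh_C^{\beta H}(\mathcal{X})$ and strictly monotone along strictly increasing chains with the same $\ch_0^\beta$, and discreteness of the image lattice $\mathbb{Z}\oplus\mathbb{Z}\oplus\frac12\mathbb{Z}$. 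The see-saw property for $\nu_{\sigma_{H,F}^{\alpha,\beta}}$ is immediate once positivity is known, since $Z_{H,F}^{\alpha,\beta}$ is additive on short exact sequences in $\Coh_C^{\beta H}(\mathcal{X})$.

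The main obstacle I expect is the positivity axiom in the degenerate case $\Im Z = 0$, precisely because of the $C$-torsion sheaves living inside $\mathcal{F}_{\beta H}$: a sheaf $\mathcal{E}$ with $\mathcal{E}_{K(C)}=0$ has $FH^{n-2}\ch_1^\beta(\mathcal{E})=0$ automatically, so it sits on the boundary, and one must check its $\ch_2^\beta$ has the right sign using the \emph{second} Bogomolov inequality $\widetilde{\Delta}^{\beta H}_{H,F}\geq 0$ from Theorem \ref{Bog1} rather than the first. I would handle this by filtering such an $\mathcal{E}$ by its $C$-torsion filtration (Lemma \ref{flat}), reducing to a pushforward $i_{W*}\mathcal{Q}$ from a fiber, and then invoking Lemma \ref{Chern} to translate $\ch^\beta_\bullet(i_{W*}\mathcal{Q})$ into Chern characters on the fiber, where the needed inequality becomes a fiberwise Bogomolov bound. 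The remaining pieces—additivity of $Z$, the see-saw property, reduction of HN-existence to noetherianity plus discreteness—are by now routine adaptations of \cite{BMT}.
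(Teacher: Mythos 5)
Your verification of the positivity axiom follows the paper's own argument closely: reduce to the cohomology sheaves $\mathcal{H}^{-1}(\mathcal{E})\in\mathcal{F}_{\beta H}$ and $\mathcal{H}^{0}(\mathcal{E})\in\mathcal{T}_{\beta H}$, note $FH^{n-2}\ch_1^{\beta}\geq 0$ by construction, and in the degenerate case $FH^{n-2}\ch_1^{\beta}(\mathcal{E})=0$ control $\Re Z_{H,F}^{\alpha,\beta}$ by splitting off the $C$-torsion and codimension-$\geq 2$ pieces (where Lemma \ref{Chern} and the definition of $\mu_C$ on $C$-torsion sheaves suffice, since $F^2=0$ forces $FH^{n-3}\ch_2^{\beta}$ to vanish there) and applying Theorem \ref{Bog1} to the remaining torsion-free $\mu_{H,F}$-semistable sheaf of slope exactly $\beta$. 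That part of your plan is sound and is essentially Step 1 of the paper.

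The gap is in the Harder--Narasimhan step, specifically in your proposed proof that $\Coh_C^{\beta H}(\mathcal{X})$ is noetherian. You want to run the classical argument: boundedness and strict monotonicity of $FH^{n-2}\ch_1^{\beta}$ along chains, plus discreteness of the lattice $\mathbb{Z}\oplus\mathbb{Z}\oplus\frac{1}{2}\mathbb{Z}$. This fails here because the entire class $v(\mathcal{E})=(FH^{n-1}\ch_0,\,FH^{n-2}\ch_1,\,FH^{n-3}\ch_2)$ vanishes on every $C$-torsion object of the heart when $n\geq 3$ (each $\ch_j$ of a pushforward from a fiber is divisible by the class of $F$, and $F^2=0$; cf.\ Remark \ref{rem}). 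Hence a non-terminating chain of quotients can have all of these invariants constant: your claimed strict monotonicity is false, and no discreteness of the image of $Z_{H,F}^{\alpha,\beta}$ can detect chains whose successive kernels are $C$-torsion. The noetherianity of the $C$-torsion part of the heart is itself a nontrivial statement, essentially the noetherianity of the tilted hearts $\Coh^{\beta H_s}(\mathcal{X}_s)$ on the fibers together with the fact that a $C$-torsion object is supported over finitely many points. The paper handles exactly this by exhibiting the fiberwise weak stability conditions $\sigma_s^{\sharp\beta}$ as a flat family over $C$ (\cite{BLMNPS}, Proposition 25.1) and then quoting Corollary 20.10 and Proposition 15.14 of \cite{BLMNPS} to obtain both the $C$-torsion theory and the noetherianity of $\Coh_C^{\beta H}(\mathcal{X})$. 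You would need either to import that machinery or to supply a separate argument for the $C$-torsion subcategory; as written, your noetherianity step does not go through.
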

\begin{proof}
\textbf{Step 1.} The pair $\sigma_{H,F}^{\alpha,
\beta}=(Z_{H,F}^{\alpha, \beta}, \Coh_C^{\beta H}(\mathcal{X}))$
satisfies the positivity property for any $0\neq\mathcal{E}\in
\Coh_C^{\beta H}(\mathcal{X})$.
\bigskip

By the construction of $\Coh_C^{\beta H}(\mathcal{X})$, one sees
that
$$\Im Z_{H,F}^{\alpha,\beta}(v(\mathcal{E}))=FH^{n-2}\ch^{\beta}_1(\mathcal{E})=FH^{n-2}\ch_1(\mathcal{E})-\beta
FH^{n-1}\ch_0(\mathcal{E})\geq0,$$ for any
$\mathcal{E}\in\Coh_C^{\beta H}(\mathcal{X})$. Now we assume that
$FH^{n-2}\ch^{\beta}_1(\mathcal{E})=0$. One obtains
\begin{equation}\label{2.1}
\ch_0(\mathcal{H}^0(\mathcal{E}))=FH^{n-2}\ch_1(\mathcal{H}^0(\mathcal{E}))=0,
~\mu^-_C(\mathcal{H}^0(\mathcal{E}))>\beta
\end{equation}
and one of the following cases occurs:
\begin{enumerate}
\item $\ch_0(\mathcal{H}^{-1}(\mathcal{E}))>0$ and
$\mathcal{H}^{-1}(\mathcal{E})$ is $\mu_C$-semistable with
$\mu_C(\mathcal{H}^{-1}(\mathcal{E}))=\beta$;

\item
$\mathcal{H}^{-1}(\mathcal{E})$ is $C$-torsion and
$\mu^+_C(\mathcal{H}^{-1}(\mathcal{E}))\leq\beta$.
\end{enumerate}

One sets $\mathcal{G}$ be the maximal subsheaf of
$\mathcal{H}^0(\mathcal{E})$ whose support has codimension $\geq2$.
Then $\mathcal{H}^0(\mathcal{E})/\mathcal{G}$ is pure and
$C$-torsion. The condition (\ref{2.1}) implies that
$$H^{n-1}\ch_1(\mathcal{H}^{0}(\mathcal{E}))=H^{n-1}\ch_1(\mathcal{H}^{0}(\mathcal{E})/\mathcal{G})\geq0$$
and

$$\mu_C(\mathcal{H}^{0}(\mathcal{E})/\mathcal{G})=\frac{H^{n-2}\ch_2(\mathcal{H}^{0}(\mathcal{E})/\mathcal{G})}{H^{n-1}
\ch_1(\mathcal{H}^{0}(\mathcal{E})/\mathcal{G})}=
\frac{H^{n-2}\ch^{\beta}_2(\mathcal{H}^{0}(\mathcal{E})/\mathcal{G})}{H^{n-1}\ch_1(\mathcal{H}^{0}(\mathcal{E})/\mathcal{G})}+\beta>\beta.$$
Thus we get
$H^{n-2}\ch^{\beta}_2(\mathcal{H}^0(\mathcal{E}))=H^{n-2}\ch^{\beta}_2(\mathcal{H}^0(\mathcal{E})/\mathcal{G})+H^{n-2}\ch^{\beta}_2(\mathcal{G})\geq0$.
When $n\geq3$, one has
$FH^{n-3}\ch^{\beta}_2(\mathcal{H}^0(\mathcal{E})/\mathcal{G})=0$
and $FH^{n-3}\ch^{\beta}_2(\mathcal{G})\geq0$. Hence we obtain
$$FH^{n-3}\ch^{\beta}_2(\mathcal{H}^0(\mathcal{E}))\geq0$$
when $n\geq2$.

On the other hand, in the case (2), since
$\mu^+_C(\mathcal{H}^{-1}(\mathcal{E}))\leq\beta$, one sees that
$H^{n-1}\ch_1(\mathcal{H}^{-1}(\mathcal{E}))>0$ and
$$\mu_C(\mathcal{H}^{-1}(\mathcal{E}))=\frac{H^{n-2}\ch_2(\mathcal{H}^{-1}(\mathcal{E}))}{H^{n-1}\ch_1(\mathcal{H}^{-1}(\mathcal{E}))}=
\frac{H^{n-2}\ch^{\beta}_2(\mathcal{H}^{-1}(\mathcal{E}))}{H^{n-1}\ch_1(\mathcal{H}^{-1}(\mathcal{E}))}+\beta\leq\beta.$$
These imply that
$H^{n-2}\ch^{\beta}_2(\mathcal{H}^{-1}(\mathcal{E}))\leq0$ when
$n\geq2$ and
$FH^{n-3}\ch^{\beta}_2(\mathcal{H}^{-1}(\mathcal{E}))=0$ when
$n\geq3$. Therefore in the case (2) we obtain $$\Re
Z_{H,F}^{\alpha,\beta}(v(\mathcal{E}))=\frac{\alpha^2}{2}FH^{n-1}\ch_0^{\beta}(\mathcal{E})-FH^{n-3}\ch_2^{\beta}(\mathcal{E})\leq0.$$
For the case (1), we let $\mathcal{T}$ be the torsion part of
$\mathcal{H}^{-1}(\mathcal{E})$. One sees that $\mathcal{T}$ is
$C$-torsion and
$$FH^{n-2}\ch^{\beta}_1(\mathcal{T})=FH^{n-2}\ch^{\beta}_1(\mathcal{H}^{-1}(\mathcal{E})/\mathcal{T})=0~\mbox{and} ~\mu^+_C(\mathcal{T})\leq\beta.$$
By the same way as the proof in the case (2), one obtains
$H^{n-2}\ch^{\beta}_2(\mathcal{H}^{-1}(\mathcal{T}))\leq0$ when
$n\geq2$ and
$FH^{n-3}\ch^{\beta}_2(\mathcal{H}^{-1}(\mathcal{T}))=0$ when
$n\geq3$. Thus $FH^{n-3}\ch_2^{\beta}(\mathcal{T})\leq0$. Since
$FH^{n-2}\ch^{\beta}_1(\mathcal{H}^{-1}(\mathcal{E})/\mathcal{T})=0$,
we infer that
$$\mu_{H,F}(\mathcal{H}^{-1}(\mathcal{E}))=\mu_{H,F}(\mathcal{H}^{-1}(\mathcal{E})/\mathcal{T})=\beta.$$
Thus $\mathcal{H}^{-1}(\mathcal{E})/\mathcal{T}$ is a
$\mu_{H,F}$-semistable sheaf by Proposition \ref{pro2.8}. By Theorem
\ref{Bog1} we have
$H^{n-2}\ch^{\beta}_2(\mathcal{H}^{-1}(\mathcal{E})/\mathcal{T})\leq0$
if $n\geq2$ and
$FH^{n-3}\ch^{\beta}_2(\mathcal{H}^{-1}(\mathcal{E})/\mathcal{T})\leq0$
if $n\geq3$. Hence
$FH^{n-3}\ch^{\beta}_2(\mathcal{H}^{-1}(\mathcal{E}))\leq0$, and in
the case (1) we conclude $\Re
Z_{H,F}^{\alpha,\beta}(v(\mathcal{E}))<0$.

\bigskip
\textbf{Step 2.} The category $\Coh_C^{\beta H}(\mathcal{X})$ is
noetherian, and $\sigma_{H,F}^{\alpha, \beta}$ satisfies the
Harder-Narasimhan property.
\bigskip

For every $s\in C$ we let
$$\sigma^{\sharp\beta}_s:=\left(Z_s=\ch^{\beta}_{\mathcal{X}_s,0}+i\ch^{\beta}_{\mathcal{X}_s,1}, \Coh^{\beta
H_s}(\mathcal{X}_s)\right).$$ By \cite[Proposition 25.1]{BLMNPS},
one sees that the collection
$\underline{\sigma}^{\sharp\beta}:=(\sigma^{\sharp\beta}_s)$ is a
flat family of fiberwise weak stability condition on
$\D^b(\mathcal{X})$ over $C$ (See \cite[Definition 20.5]{BLMNPS}).
Hence from \cite[Corollary 20.10]{BLMNPS} and \cite[Proposition
15.14]{BLMNPS}, it follows that $\Coh_C^{\beta H}(\mathcal{X})$ has
a $C$-torsion theory and is noetherian. Since
$Z_{H,F}^{\alpha,\beta}$ has discrete image, we conclude that the
Harder-Narasimhan filtrations exist for objects in $\Coh_C^{\beta
H}(\mathcal{X})$ with respect to $Z_{H,F}^{\alpha, \beta}$ (cf.
\cite[Lemma 2.18]{PT}).
\end{proof}

\begin{remark}\label{rem}
Let $\mathcal{E}$ be an object in $\Coh_C^{\beta H}(\mathcal{X})$
with $Z_{H,F}^{\alpha, \beta}(v(\mathcal{E}))=0$. Since
$\Coh_C^{\beta H}(\mathcal{X})$ has a $C$-torsion theory, we denote
by $\mathcal{E}_{C\text{-tor}}$ and $\mathcal{E}_{C\text{-tf}}$ the
$C$-torsion part and $C$-torsion free part of $\mathcal{E}$,
respectively. Then one sees that
$$Z_{H,F}^{\alpha,
\beta}(v(\mathcal{E}_{C\text{-tor}}))=Z_{H,F}^{\alpha,
\beta}(v(\mathcal{E}_{C\text{-tf}}))=0.$$ By the proof of Theorem
\ref{tilt}, we can deduce that
$\mathcal{H}^{-1}(\mathcal{E}_{C\text{-tf}})=0$ and
$\mathcal{H}^{0}(\mathcal{E}_{C\text{-tf}})\in\Coh_{\leq
n-3}(\mathcal{X})$. In particular, $\mathcal{E}_{C\text{-tf}}=0$
when $n\leq3$. On the other hand, one has $Z_{H,F}^{\alpha,
\beta}(v(\mathcal{F}))=0$ for any $C$-torsion object
$\mathcal{F}\in\Coh_C^{\beta H}(\mathcal{X})$ when $n\geq3$. Hence
we conclude that if $n=3$, then $\mathcal{F}\in\Coh_C^{\beta
H}(\mathcal{X})$ is $C$-torsion is equivalent to $Z_{H,F}^{\alpha,
\beta}(v(\mathcal{F}))=0$.
\end{remark}

\begin{remark}
The pair $\sigma_F^{\alpha, \beta}=(Z_F^{\alpha, \beta},
\Coh_C^{\beta H}(\mathcal{X}))$ in Theorem \ref{tilt} is not a
stability condition, since $$Z_F^{\alpha,
\beta}(\mathcal{O}_F)=Z_F^{\alpha, \beta}(\mathcal{O}_F[1])=0$$ and
one of $\mathcal{O}_F$ and $\mathcal{O}_F[1]$ is in $\Coh_C^{\beta
H}(\mathcal{X})$.
\end{remark}

\begin{lemma}\label{ch2}
Let $\mathcal{E}$ be an object in $\Coh_C^{\beta H}(\mathcal{X})$.
\begin{enumerate}
\item We have $FH^{n-2}\ch^{\beta}_1(\mathcal{E})\geq0$.
\item If $FH^{n-2}\ch^{\beta}_1(\mathcal{E})=0$, then one has
$H^{n-2}\ch^{\beta}_2(\mathcal{E})\geq0$,
$FH^{n-3}\ch^{\beta}_2(\mathcal{E})\geq0$ and
$\ch_0(\mathcal{E})\leq0$.
\item If $FH^{n-2}\ch^{\beta}_1(\mathcal{E})=\ch_0(\mathcal{E})=H^{n-2}\ch^{\beta}_2(\mathcal{E})=0$, then
$\mathcal{H}^0(\mathcal{E})\in\Coh_{\leq n-3}(\mathcal{X})$,
$\mathcal{H}^{-1}(\mathcal{E})$ is a $C$-torsion $\mu_C$-semistable
sheaf with $\mu_C(\mathcal{H}^{-1}(\mathcal{E}))=\beta$ and
$H^{n-3}\ch^{\beta}_3(\mathcal{E})\geq0$.
\end{enumerate}
\end{lemma}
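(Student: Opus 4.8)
I would deduce (1) and (2) directly from the positivity analysis carried out in Step 1 of the proof of Theorem \ref{tilt}, and prove (3) by a finer study of the cohomology sheaves $\mathcal{H}^0(\mathcal{E})$ and $\mathcal{H}^{-1}(\mathcal{E})$ followed by a fibrewise Bogomolov--Hodge-index argument. For (1), note that $FH^{n-2}\ch_1^{\beta}(\mathcal{E})=\Im Z_{H,F}^{\alpha,\beta}(v(\mathcal{E}))$, which is shown in that Step 1 to be $\geq 0$ for every object of $\Coh_C^{\beta H}(\mathcal{X})$. For (2), assume $FH^{n-2}\ch_1^{\beta}(\mathcal{E})=0$; the same Step 1 shows $\ch_0(\mathcal{H}^0(\mathcal{E}))=FH^{n-2}\ch_1(\mathcal{H}^0(\mathcal{E}))=0$, $H^{n-2}\ch_2^{\beta}(\mathcal{H}^0(\mathcal{E}))\geq 0$ and $FH^{n-3}\ch_2^{\beta}(\mathcal{H}^0(\mathcal{E}))\geq 0$, while $\mathcal{H}^{-1}(\mathcal{E})$ falls into one of the two cases recalled there, in both of which $H^{n-2}\ch_2^{\beta}(\mathcal{H}^{-1}(\mathcal{E}))\leq 0$ and $FH^{n-3}\ch_2^{\beta}(\mathcal{H}^{-1}(\mathcal{E}))\leq 0$. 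Since $\ch(\mathcal{E})=\ch(\mathcal{H}^0(\mathcal{E}))-\ch(\mathcal{H}^{-1}(\mathcal{E}))$ and $\ch_0(\mathcal{H}^{-1}(\mathcal{E}))\geq 0$, comparing the three relevant numerical invariants yields all of (2).

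For (3), assume in addition $\ch_0(\mathcal{E})=H^{n-2}\ch_2^{\beta}(\mathcal{E})=0$. From $\ch_0(\mathcal{E})=-\ch_0(\mathcal{H}^{-1}(\mathcal{E}))$ we get $\ch_0(\mathcal{H}^{-1}(\mathcal{E}))=0$, so case (1) is excluded and $\mathcal{H}^{-1}(\mathcal{E})$ is $C$-torsion with $\mu_C^+(\mathcal{H}^{-1}(\mathcal{E}))\leq\beta$; having finite $\mu_C^+$ it has no subsheaf of support codimension $\geq 2$, hence is pure of dimension $n-1$. Using the sign information above, the equality $H^{n-2}\ch_2^{\beta}(\mathcal{E})=H^{n-2}\ch_2^{\beta}(\mathcal{H}^0(\mathcal{E}))-H^{n-2}\ch_2^{\beta}(\mathcal{H}^{-1}(\mathcal{E}))=0$ forces $H^{n-2}\ch_2^{\beta}(\mathcal{H}^0(\mathcal{E}))=H^{n-2}\ch_2^{\beta}(\mathcal{H}^{-1}(\mathcal{E}))=0$. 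Running the Harder--Narasimhan filtration of $\mathcal{H}^{-1}(\mathcal{E})$ for $\mu_C$, whose factors are pure $C$-torsion of finite slope $\leq\beta$ and so contribute $(\mu_C-\beta)H^{n-1}\ch_1\leq 0$ to $H^{n-2}\ch_2^{\beta}$ with $H^{n-1}\ch_1>0$, shows every factor has $\mu_C=\beta$, i.e.\ $\mathcal{H}^{-1}(\mathcal{E})$ is $\mu_C$-semistable with $\mu_C(\mathcal{H}^{-1}(\mathcal{E}))=\beta$. For $\mathcal{H}^0(\mathcal{E})$, write it, as in Step 1, as an extension of its pure $C$-torsion quotient $\mathcal{H}^0(\mathcal{E})/\mathcal{G}$ of dimension $n-1$ by the maximal subsheaf $\mathcal{G}$ of support codimension $\geq 2$; since $H^{n-2}\ch_2^{\beta}(\mathcal{H}^0(\mathcal{E})/\mathcal{G})>0$ whenever this quotient is nonzero and $H^{n-2}\ch_2^{\beta}(\mathcal{G})=H^{n-2}\ch_2(\mathcal{G})\geq 0$, the vanishing forces $\mathcal{H}^0(\mathcal{E})=\mathcal{G}$ with $H^{n-2}\ch_2(\mathcal{G})=0$. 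Finally, $\mathcal{G}\in\mathcal{T}_{\beta H}$ of support codimension $\geq 2$ is an iterated extension of $\mu_C$-stable sheaves of support codimension $\geq 2$ and slope $>\beta$, and any such sheaf is $0$-dimensional (a $\mu_C$-stable sheaf of dimension $\leq n-2$ admits no proper nonzero subsheaf with nonzero quotient, as both would have $\mu_C=+\infty$); hence $\mathcal{H}^0(\mathcal{E})=\mathcal{G}\in\Coh_{\leq n-3}(\mathcal{X})$.

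It remains to prove $H^{n-3}\ch_3^{\beta}(\mathcal{E})\geq 0$. As $\ch(\mathcal{E})=\ch(\mathcal{H}^0(\mathcal{E}))-\ch(\mathcal{H}^{-1}(\mathcal{E}))$ and $\mathcal{H}^0(\mathcal{E})=\mathcal{G}\in\Coh_{\leq n-3}(\mathcal{X})$ gives $H^{n-3}\ch_3^{\beta}(\mathcal{H}^0(\mathcal{E}))=H^{n-3}\ch_3(\mathcal{G})\geq 0$ (as $H$ is nef), it suffices to show $H^{n-3}\ch_3^{\beta}(\mathcal{H}^{-1}(\mathcal{E}))\leq 0$. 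The plan is to filter the pure $C$-torsion $\mu_C$-semistable sheaf $\mathcal{H}^{-1}(\mathcal{E})$ so as to reduce to the case $\mathcal{H}^{-1}(\mathcal{E})=i_{p*}\mathcal{G}_p$ with $p\in C$ a closed point and $\mathcal{G}_p$ a $\mu_{H_p}$-semistable torsion-free sheaf on $\mathcal{X}_p$ with $\mu_{H_p}(\mathcal{G}_p)=\beta$, so that $H_p^{n-2}\ch_1^{\beta H_p}(\mathcal{G}_p)=0$; by Lemma \ref{Chern} one has $H^{n-3}\ch_3^{\beta}(i_{p*}\mathcal{G}_p)=H_p^{n-3}\ch_2^{\beta H_p}(\mathcal{G}_p)$, the classical Bogomolov inequality (Theorem \ref{Bog}) on $\mathcal{X}_p$ gives $H_p^{n-3}\bigl((\ch_1^{\beta H_p}(\mathcal{G}_p))^2-2\ch_0(\mathcal{G}_p)\,\ch_2^{\beta H_p}(\mathcal{G}_p)\bigr)\geq 0$, and the Hodge index theorem on $\mathcal{X}_p$ (using that $H_p$ is ample and $H_p^{n-2}\ch_1^{\beta H_p}(\mathcal{G}_p)=0$) gives $H_p^{n-3}(\ch_1^{\beta H_p}(\mathcal{G}_p))^2\leq 0$; together these yield $H_p^{n-3}\ch_2^{\beta H_p}(\mathcal{G}_p)\leq 0$. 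The main obstacle is precisely this last step: before the Bogomolov inequality and the Hodge index theorem can be invoked fibrewise one must carry out the reduction to a single fibre and verify that the resulting fibrewise sheaf is $\mu_{H_p}$-semistable with slope exactly $\beta$ (this is where the purity and $\mu_C$-semistability of $\mathcal{H}^{-1}(\mathcal{E})$ established above are used). The other delicate point is obtaining the sharp statement $\mathcal{H}^0(\mathcal{E})\in\Coh_{\leq n-3}(\mathcal{X})$, rather than merely $\Coh_{\leq n-2}(\mathcal{X})$, which is where the description of $\mathcal{T}_{\beta H}$ via its $\mu_C$-stable generators (equivalently, positivity of $H$ along $(n-2)$-dimensional cycles) enters.
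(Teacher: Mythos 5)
Your treatment of (1) and (2), and the first part of (3), follows the paper's route (everything is extracted from Step 1 of the proof of Theorem \ref{tilt}), but there are two genuine problems in your part (3). First, your justification that $\mathcal{H}^0(\mathcal{E})\in\Coh_{\leq n-3}(\mathcal{X})$ is wrong: you argue that $\mathcal{G}\in\mathcal{T}_{\beta H}$ with support of codimension $\geq 2$ is an iterated extension of $\mu_C$-stable sheaves of codimension $\geq 2$, each of which is $0$-dimensional. If that argument were valid it would show that \emph{every} sheaf in $\mathcal{T}_{\beta H}$ supported in codimension $\geq 2$ lies in $\Coh_{\leq n-3}(\mathcal{X})$, which is false (for $n=3$, any sheaf supported on a curve has all $\mu_C$-slopes equal to $+\infty$ and hence lies in $\mathcal{T}_{\beta H}$). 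The issue is that objects in the $\mu_C=+\infty$ locus do not decompose into $\mu_C$-stable factors in the naive sense, so the "generated by stable sheaves of slope $>\beta$" description cannot be used this way. The correct step — which you in fact have in hand but do not use — is that the vanishing forces $H^{n-2}\ch_2(\mathcal{G})=0$, and since $\ch_2(\mathcal{G})$ is an effective codimension-two cycle this kills the codimension-two components of the support; this is how the paper gets $\mathcal{H}^0(\mathcal{E})\in\Coh_{\leq n-3}(\mathcal{X})$.

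Second, for the inequality $H^{n-3}\ch_3^{\beta}(\mathcal{E})\geq 0$ you only describe a plan and explicitly label its key step ("reduce to $\mathcal{H}^{-1}(\mathcal{E})=i_{p*}\mathcal{G}_p$ with $\mathcal{G}_p$ a $\mu_{H_p}$-semistable sheaf of slope $\beta$ on the reduced fibre") as the main unresolved obstacle. This reduction is exactly the content of the paper's proof and cannot be skipped: a $C$-torsion sheaf is in general supported on a \emph{thickened} fibre and is not itself a pushforward from $\mathcal{X}_p$. The paper handles this by localizing to a single closed point $s\in C$, taking the filtration $\mathcal{G}_j=\pi^j\cdot\mathcal{H}^{-1}(\mathcal{E})$ by powers of a local uniformizer (via \cite[Lemma 6.11]{BLMNPS}), whose graded pieces $\mathcal{G}_j/\mathcal{G}_{j+1}=i_{s*}(\mathcal{F}_j)$ are genuine pushforwards from $\mathcal{X}_s$ and, being subquotients of a $\mu_C$-semistable sheaf of slope $\beta$, remain $\mu_C$-semistable of slope $\beta$; then $\mathcal{F}_j$ is $\mu_{H_s}$-semistable of slope $\beta$, and the fibrewise Bogomolov inequality (Theorem \ref{Bog}) together with the Hodge index theorem and Lemma \ref{Chern} gives $H^{n-3}\ch_3^{\beta}(i_{s*}\mathcal{F}_j)=H_s^{n-3}\ch_2^{\beta H_s}(\mathcal{F}_j)\leq 0$, and one sums over $j$. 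Your closing fibrewise computation is the right one, but without the filtration argument the proof is incomplete.
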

\begin{proof}
The first statement follows from the definition of $\Coh_C^{\beta
H}(\mathcal{X})$. By Step 1 in the proof of Theorem \ref{tilt}, one
obtains the second statement.

For the third statement, still by Step 1 in the proof of Theorem
\ref{tilt}, one sees that if $FH^{n-2}\ch^{\beta}_1(\mathcal{E})=0$,
$\ch_0(\mathcal{E})=0$ and $H^{n-2}\ch^{\beta}_2(\mathcal{E})=0$
then the support of $\mathcal{H}^0(\mathcal{E})$ has codimension
$\geq3$ and $\mathcal{H}^{-1}(\mathcal{E})$ is a $C$-torsion
$\mu_C$-semistable sheaf with
$\mu_C(\mathcal{H}^{-1}(\mathcal{E}))=\beta$. Hence under the
assumptions of the third statement we have
$H^{n-3}\ch^{\beta}_3(\mathcal{H}^{0}(\mathcal{E}))\geq0$. Now we
prove that
$H^{n-3}\ch^{\beta}_3(\mathcal{H}^{-1}(\mathcal{E}))\leq0$. Without
loss of generality, we can assume that
$\mathcal{H}^{-1}(\mathcal{E})$ is a $C$-torsion sheaf
set-theoretically supported over a closed point $s\in C$. Let $\pi$
be a local generator of $I_s$. By \cite[Lemma 6.11]{BLMNPS}, one
obtains a filtration
$$0=\mathcal{G}_m\subset\mathcal{G}_{m-1}\subset\cdots\subset\mathcal{G}_1\subset\mathcal{G}_0=\mathcal{H}^{-1}(\mathcal{E})$$
where $\mathcal{G}_j=\pi^j\cdot\mathcal{H}^{-1}(\mathcal{E})$ and
all filtration quotients $\mathcal{G}_{j}/\mathcal{G}_{j+1}$ are
quotients of $\mathcal{G}_{0}/\mathcal{G}_{1}$ in
$i_{s*}(\Coh(\mathcal{X}_s))$. Since $\mathcal{H}^{-1}(\mathcal{E})$
is a $\mu_C$-semistable sheaf with
$\mu_C(\mathcal{H}^{-1}(\mathcal{E}))=\beta$, so are $\mathcal{G}_j$
and $\mathcal{G}_{j}/\mathcal{G}_{j+1}$ for $j=0,1,\cdots, m-1$. We
write $\mathcal{G}_{j}/\mathcal{G}_{j+1}=i_{s*}(\mathcal{F}_j)$,
here $\mathcal{F}_j\in\Coh(\mathcal{X}_s)$. Then from Bogomolov's
inequality for the semistable sheaves $\mathcal{F}_j$ on
$\mathcal{X}_s$ and Lemma \ref{Chern}, it follows that
$$H^{n-3}\ch^{\beta}_3(i_{s*}(\mathcal{F}_j))=H^{n-3}i_{s*}(\ch^{\beta}_2(\mathcal{F}_j))=H_s^{n-3}\ch^{\beta}_2(\mathcal{F}_j)\leq0.$$
This implies
$$H^{n-3}\ch^{\beta}_3(\mathcal{H}^{-1}(\mathcal{E}))=\sum_{j=0}^{m-1}H^{n-3}\ch^{\beta}_3(\mathcal{G}_{j}/\mathcal{G}_{j+1})\leq0.$$
Therefore one concludes that
$$H^{n-3}\ch^{\beta}_3(\mathcal{E})=H^{n-3}\ch^{\beta}_3(\mathcal{H}^{0}(\mathcal{E}))+H^{n-3}\ch^{\beta}_3(\mathcal{H}^{-1}(\mathcal{E})[1])\geq0.$$
This proves the third statement.
\end{proof}

We write $\nu_{H,F}^{\alpha, \beta}$ for the slope function on
$\Coh_C^{ \beta H}(\mathcal{X})$ induced by $Z_{H,F}^{\alpha,
\beta}$. Explicitly, for any $\mathcal{E}\in \Coh_C^{ \beta
H}(\mathcal{X})$, one has
\begin{eqnarray*}
\nu_{H,F}^{\alpha, \beta}(\mathcal{E})= \left\{
\begin{array}{lcl}
+\infty,  & &\mbox{if}~FH^{n-2}\ch^{\beta}_1(\mathcal{E})=0,\\
&&\\
\frac{FH^{n-3}\ch_2^{\beta}(\mathcal{E})-\frac{1}{2}\alpha^2FH^{n-1}\ch^{\beta}_0(\mathcal{E})}{FH^{n-2}\ch^{\beta}_1(\mathcal{E})},
& &\mbox{otherwise}.
\end{array}\right.
\end{eqnarray*}
Theorem \ref{tilt} gives the notion of
$\nu_{H,F}^{\alpha,\beta}$-stability. We can also consider the
tilt-stability on the fibers of $f$. If $n\geq3$, for any point
$s\in C$, we define
$$\sigma_s^{\alpha,\beta}:=\left(Z_s^{\alpha,\beta}=iH^{n-2}_s\ch_{\mathcal{X}_s,1}^{\beta}
+\frac{\alpha^2}{2}H_s^{n-1}\ch_{\mathcal{X}_s,0}^{\beta}-H^{n-3}_s\ch_{\mathcal{X}_s,2}^{\beta},
\Coh^{\beta H_s}(\mathcal{X}_s) \right).$$ This is the
tilt-stability condition on $\mathcal{X}_s$ defined in \cite{BMT,
BMS}. We write $\nu_s^{\alpha, \beta}$ for the slope function on
$\Coh^{\beta H_s}(\mathcal{X}_s)$ induced by $Z_s^{\alpha, \beta}$.
One sees
\begin{eqnarray*}
\nu_s^{\alpha,
\beta}(\mathcal{E}_s)=\nu_{H,F}^{\alpha,\beta}(\mathcal{E}).
\end{eqnarray*}
We also call $\nu_{H,F}^{\alpha,\beta}$-stability relative
tilt-stability.

\begin{lemma}\label{K-stable}
Let $\mathcal{E}\in \Coh_C^{ \beta H}(\mathcal{X})$ be a $C$-torsion
free object. Then the following conditions are equivalent:
\begin{enumerate}
\item $\mathcal{E}$ is $\nu_{H,F}^{\alpha, \beta}$-(semi)stable;

\item $\mathcal{E}_{K(C)}$ is $\nu_{K(C)}^{\alpha,
\beta}$-(semi)stable;

\item there exists an open subset $U\subset C$ such that
$\mathcal{E}_{s}$ is $\nu_{s}^{\alpha, \beta}$-(semi)stable for any
point $s\in U$.
\end{enumerate}
\end{lemma}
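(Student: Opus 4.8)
The plan is to mimic exactly the structure of Lemma \ref{f-stable}, which is the analogous statement at the level of $\mu_{H,F}$-stability versus $\mu_{H_s}$-stability, now lifted through the tilting construction. The central charge $Z_{H,F}^{\alpha,\beta}$ was built precisely so that it is compatible with base change to the generic fiber and to closed fibers: one has the numerical identity $\nu_s^{\alpha,\beta}(\mathcal{E}_s)=\nu_{H,F}^{\alpha,\beta}(\mathcal{E})$ recorded just before the lemma, and the fiberwise weak stability conditions $\sigma_s^{\sharp\beta}$ and $\sigma_s^{\alpha,\beta}$ were already assembled into flat families over $C$ in the proof of Theorem \ref{tilt} (via \cite[Proposition 25.1]{BLMNPS}). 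So the hard analytic content has been pushed into the BLMNPS machinery, and the job here is to quote it correctly.

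First I would prove (1) $\Leftrightarrow$ (2). Since $\mathcal{E}$ is $C$-torsion free, by Lemma \ref{flat} it is $C$-flat, so $\mathcal{E}_{K(C)}\in\Coh^{\beta H_{K(C)}}(\mathcal{X}_{K(C)})$, and the same flatness applies to any subobject of $\mathcal{E}$ in $\Coh_C^{\beta H}(\mathcal{X})$; restriction to the generic fiber is exact on the $C$-torsion free part and takes subobjects to subobjects, with the slopes matching because $\nu_{H,F}^{\alpha,\beta}(\mathcal{E})=\nu_{K(C)}^{\alpha,\beta}(\mathcal{E}_{K(C)})$. Conversely a destabilizing subobject over $K(C)$ spreads out over an open $U$ and, after taking the $C$-torsion free part of its closure, gives a destabilizing subobject of $\mathcal{E}$ in $\Coh_C^{\beta H}(\mathcal{X})$. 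This is the exact formal analogue of the $\mu_{H,F}$ argument, just one abelian category up; I would cite \cite[Ex. II.5.15]{Hart} for the generic-fiber (semi)stability equivalence in the same way Lemma \ref{f-stable} does, or invoke the flat-family formalism of \cite[Section 20--21]{BLMNPS} directly.

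Then (2) $\Leftrightarrow$ (3) is the openness statement for tilt-stability in a flat family. Having the flat family $\underline{\sigma}^{\alpha,\beta}:=(\sigma_s^{\alpha,\beta})_{s\in C}$ of fiberwise weak stability conditions (which follows as in Theorem \ref{tilt} from \cite[Proposition 25.1]{BLMNPS}, using that the heart $\Coh_C^{\beta H}(\mathcal{X})$ carries a $C$-torsion theory and is noetherian), the equivalence of generic-fiber (semi)stability with (semi)stability on a dense open set of closed fibers is a general openness property of stability in flat families — I would cite the relevant statement in \cite{BLMNPS} (in the circle of \cite[Corollary 20.10]{BLMNPS} and the openness results there). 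One has to be slightly careful that ``very weak'' rather than ``weak'' stability is in play and that objects with $Z=0$ (the $C$-torsion direction) are handled, but the $C$-torsion free hypothesis on $\mathcal{E}$ together with Remark \ref{rem} keeps us away from the degenerate locus on the generic fiber.

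The main obstacle is making the spreading-out argument in (2) $\Rightarrow$ (1) rigorous inside the tilted heart: a subobject of $\mathcal{E}_{K(C)}$ in $\Coh^{\beta H_{K(C)}}(\mathcal{X}_{K(C)})$ must be extended to an honest subobject of $\mathcal{E}$ in $\Coh_C^{\beta H}(\mathcal{X})$, not merely to a map in the derived category, and one needs to know the relevant Harder-Narasimhan factor is $C$-flat so that its slope is computed fiberwise without correction. I expect to handle this exactly as BLMNPS do: pass to the HN filtration of $\mathcal{E}$ with respect to $\nu_{H,F}^{\alpha,\beta}$ (which exists by Theorem \ref{tilt}), note each HN factor is again $C$-torsion free hence $C$-flat because $\mathcal{E}$ is, and compare with the HN filtration of $\mathcal{E}_{K(C)}$, which agrees with the restriction of the former by flatness and the compatibility of central charges. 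Once the two HN filtrations are identified, all three conditions become the single statement that the HN filtration is trivial (resp. has all factors of equal slope), and the equivalence is immediate.
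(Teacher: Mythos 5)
Your proposal is correct and follows essentially the same route as the paper: $C$-torsion freeness makes every subobject $C$-flat (Lemma \ref{flat}), the slopes $\nu_{H,F}^{\alpha,\beta}$ and $\nu_{K(C)}^{\alpha,\beta}$ agree under restriction, and $(2)\Leftrightarrow(3)$ is openness for the flat family $(\sigma_s^{\alpha,\beta})_{s\in C}$ of fiberwise weak stability conditions via the BLMNPS machinery. The only cosmetic difference is that the paper outsources your hands-on spreading-out/HN-comparison step for $(2)\Rightarrow(1)$ to a single citation of \cite[Lemma 4.16.(2)]{BLMNPS}.
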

\begin{proof}
By Lemma \ref{flat}, one infers that any subobject $\mathcal{F}$ of
$\mathcal{E}$ is $C$-flat. Thus $\mathcal{F}_{K(C)}\in\Coh^{\beta
H_{K(C)}}(\mathcal{X}_{K(C)})$. Since
\begin{eqnarray*}
\nu_{K(C)}^{\alpha,
\beta}(\mathcal{F}_{K(C)})=\nu_{H,F}^{\alpha,\beta}(\mathcal{F}),
\end{eqnarray*} from \cite[Lemma 4.16.(2)]{BLMNPS}, one deduces that  $\mathcal{E}$ is $\nu_{H,F}^{\alpha,
\beta}$-(semi)stable if and only if $\mathcal{E}_{K(C)}$ is
$\nu_{K(C)}^{\alpha, \beta}$-(semi)stable. The implication
``$(3)\Rightarrow (2)$'' is obvious. For the other direction, by
\cite[Proposition 25.3]{BLMNPS}, one sees that
$(\sigma_s^{\alpha,\beta})_{s\in C}$ is a flat family of fiberwise
weak stability condition on $\D^b(\mathcal{X})$ over $C$. Hence
\cite[Definition 20.5.(2') and Lemma 20.4]{BLMNPS} gives the
openness of tilt-stability and tilt-semistability. This completes
the proof.
\end{proof}

Similar to the slope $\mu_C$, we define the slope $\nu_{C}^{\alpha,
\beta}$ of an object $\mathcal{E}\in \Coh_C^{ \beta H}(\mathcal{X})$
by

\begin{eqnarray*}
\nu_{C}^{\alpha, \beta}(\mathcal{E})= \left\{
\begin{array}{ll}
\frac{FH^{n-3}\ch_2^{\beta}(\mathcal{E})-\frac{1}{2}\alpha^2FH^{n-1}\ch^{\beta}_0(\mathcal{E})}{FH^{n-2}\ch^{\beta}_1(\mathcal{E})},
&\mbox{if}~FH^{n-2}\ch^{\beta}_1(\mathcal{E})\neq0,\\
\frac{H^{n-3}\ch_3^{\beta}(\mathcal{E})-\frac{1}{2}\alpha^2H^{n-1}\ch^{\beta}_1(\mathcal{E})}{H^{n-2}\ch^{\beta}_2(\mathcal{E})},
&\mbox{if}~\mathcal{E}_{K(C)}=0~\mbox{and}~H^{n-2}\ch^{\beta}_2(\mathcal{E})\neq0,\\
+\infty, &\mbox{otherwise}.
\end{array}\right.
\end{eqnarray*}
For $\mathcal{E}\in\D^b(\mathcal{X})$, we define

$$Z_{K(C)}^{\alpha, \beta}(\mathcal{E}):=\frac{\alpha^2 }{2}FH^{n-1}\ch_0^{\beta}
(\mathcal{E})-FH^{n-3}\ch_2^{\beta}(\mathcal{E})+i FH^{n-2}\ch_1^{\beta}(\mathcal{E})
$$
and
$$Z_{C\text{-tor}}^{\alpha, \beta}(\mathcal{E}):=\frac{\alpha^2 }{2}H^{n-1}\ch_1^{\beta}
(\mathcal{E})-H^{n-3}\ch_3^{\beta}(\mathcal{E})+i
H^{n-2}\ch_2^{\beta}(\mathcal{E}),$$ respectively. Then by Lemma
\ref{Chern}, one sees that
$$Z_{K(C)}^{\alpha, \beta}(\mathcal{E})=Z_{C\text{-tor}}^{\alpha, \beta}(i_{W*}\mathcal{E}_W),$$ for all closed subscheme $W\subset
C$, where $i_W:\mathcal{X}_W\hookrightarrow \mathcal{X}$ is the
embedding of the fiber over $W$. From \cite[Proposition
25.3]{BLMNPS}, it follows that $(Z_{K(C)}^{\alpha, \beta},
Z_{C\text{-tor}}^{\alpha, \beta}, \Coh^{\beta H}_C(\mathcal{X}))$ is
a weak Harder-Narasimhan structure on $\D^b(\mathcal{X})$ over $C$.
This gives the notion of $\nu_{C}^{\alpha,\beta}$-stability via the
equality
\begin{eqnarray*}
\nu_{C}^{\alpha,\beta}(\mathcal{E})= \left\{
\begin{array}{ll}
-\frac{\Re
Z_{K(C)}^{\alpha,\beta}(\mathcal{E})}{\Im Z_{K(C)}^{\alpha,\beta}(\mathcal{E})}, &\mbox{if}~\mathcal{E}_{K(C)}\neq0,\\
-\frac{\Re Z_{C\text{-tor}}^{\alpha,\beta}(\mathcal{E})}{\Im
Z_{C\text{-tor}}^{\alpha,\beta}(\mathcal{E})},&\mbox{otherwise}.
\end{array}\right.
\end{eqnarray*}


By \cite[Lemma 15.7]{BLMNPS}, one sees that
$\nu_{C}^{\alpha,\beta}$-stability requires stability for all
fibers:
\begin{proposition}
Let $\mathcal{E}\in \Coh_C^{ \beta H}(\mathcal{X})$ be a $C$-torsion
free object. Then $\mathcal{E}$ is
$\nu_{C}^{\alpha,\beta}$-semistable if and only if $\mathcal{E}$ is
$\nu_{H,F}^{\alpha,\beta}$-semistable and for any closed point $p\in
C$ and any quotient $\mathcal{E}_p\twoheadrightarrow \mathcal{Q}$ in
$\Coh^{ \beta H_p}(\mathcal{X}_p)$ we have
$\nu_{p}^{\alpha,\beta}(\mathcal{E}_p)\leq\nu_{p}^{\alpha,\beta}(\mathcal{Q})$.
\end{proposition}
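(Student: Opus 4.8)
The plan is to deduce this exactly as Proposition \ref{pro2.8} was deduced for $\mu_C$-stability, namely by invoking the general gluing result \cite[Lemma 15.7]{BLMNPS} for the weak Harder--Narasimhan structure $(Z_{K(C)}^{\alpha,\beta}, Z_{C\text{-tor}}^{\alpha,\beta}, \Coh^{\beta H}_C(\mathcal{X}))$ on $\D^b(\mathcal{X})$ over $C$ constructed just above. The crucial observation, already recorded, is that the slope $\nu_C^{\alpha,\beta}$ is by construction the slope attached to this structure through the displayed formula, so that $\nu_C^{\alpha,\beta}$-semistability is precisely the notion of semistability that the gluing machinery of \cite[Section 15]{BLMNPS} produces out of the generic-fiber weak stability $(Z_{K(C)}^{\alpha,\beta}, \Coh^{\beta H_{K(C)}}(\mathcal{X}_{K(C)}))$ together with the flat family of fiberwise weak stability conditions $(\sigma_p^{\alpha,\beta})_{p\in C}$ coming from \cite[Proposition 25.3]{BLMNPS}.

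First I would apply \cite[Lemma 15.7]{BLMNPS} verbatim: for a $C$-torsion free object $\mathcal{E}\in\Coh^{\beta H}_C(\mathcal{X})$ it asserts that $\mathcal{E}$ is semistable for the glued slope if and only if (a) the generic fiber $\mathcal{E}_{K(C)}$ is $\nu_{K(C)}^{\alpha,\beta}$-semistable, and (b) for every closed point $p\in C$ and every quotient $\mathcal{E}_p\twoheadrightarrow\mathcal{Q}$ in the fiber heart one has $\nu_p^{\alpha,\beta}(\mathcal{E}_p)\leq\nu_p^{\alpha,\beta}(\mathcal{Q})$. Here one must check that the heart attached to the closed point $p$ by the construction of \cite[Theorem 5.3]{BLMNPS} applied to the flat family $(\sigma_p^{\alpha,\beta})_{p\in C}$ is indeed $\Coh^{\beta H_p}(\mathcal{X}_p)$; this is the tilt-stability analogue of the identification of the fiber heart with $\Coh(\mathcal{X}_p)$ used in Proposition \ref{pro2.8}, and it is exactly the fiberwise comparison recalled before Lemma \ref{K-stable}.

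It then remains to translate condition (a) into the statement. By Lemma \ref{K-stable}, $\mathcal{E}_{K(C)}$ is $\nu_{K(C)}^{\alpha,\beta}$-semistable precisely when $\mathcal{E}$ is $\nu_{H,F}^{\alpha,\beta}$-semistable, so (a) becomes the asserted $\nu_{H,F}^{\alpha,\beta}$-semistability of $\mathcal{E}$; condition (b) is already phrased in the form appearing in the proposition (using the identity $\nu_p^{\alpha,\beta}(\mathcal{E}_p)=\nu_{H,F}^{\alpha,\beta}(\mathcal{E})$ where convenient). This yields both implications simultaneously, with no appeal to any Bogomolov-type inequality or further geometric input.

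The only step requiring care — everything else being bookkeeping — is confirming that the present situation literally meets the hypotheses of \cite[Lemma 15.7]{BLMNPS}: that $\nu_C^{\alpha,\beta}$ coincides with the glued slope, that $C$-torsion freeness in the sense of Definition \ref{tor} is the notion used there, and that the fiber hearts $\Coh^{\beta H_p}(\mathcal{X}_p)$ are the ones occurring in the gluing. Each of these has been arranged in Section \ref{S3} (the weak HN structure over $C$, the flat family $(\sigma_p^{\alpha,\beta})_{p\in C}$, and the fiberwise tilt hearts), so once these identifications are in place the proposition follows formally, in complete parallel with the proof of Proposition \ref{pro2.8}.
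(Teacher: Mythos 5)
Your proposal is correct and matches the paper's argument: the paper proves this proposition simply by citing \cite[Lemma 15.7]{BLMNPS} applied to the weak Harder--Narasimhan structure $(Z_{K(C)}^{\alpha,\beta}, Z_{C\text{-tor}}^{\alpha,\beta}, \Coh^{\beta H}_C(\mathcal{X}))$, exactly as it did for Proposition \ref{pro2.8}. Your additional bookkeeping (identifying the fiber hearts via the flat family from \cite[Proposition 25.3]{BLMNPS} and translating the generic-fiber condition through Lemma \ref{K-stable}) only makes explicit what the paper leaves implicit.
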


One can translate some basic properties of tilt-stability
(\cite[Proposition 14.2]{Bri2} and \cite[Proposition 7.2.1]{BMT})
into relative tilt-stability.
\begin{lemma}\label{large}
Let $\mathcal{E}\in \Coh_C^{ \beta H}(\mathcal{X})$ be a $C$-torsion
free object.
\begin{enumerate}
\item If $\mathcal{E}$ is $\nu_{H,F}^{\alpha,
\beta}$-semistable  for $\alpha\gg0$, then it satisfies one of the
following conditions:\begin{enumerate}
\item $\mathcal{H}^{-1}(\mathcal{E})=0$ and
$\mathcal{H}^{0}(\mathcal{E})$ is a $\mu_{H,F}$-semistable torsion
free sheaf.
\item $\mathcal{H}^{-1}(\mathcal{E})=0$ and
$\mathcal{H}^{0}(\mathcal{E})$ is a torsion sheaf.
\item $\mathcal{H}^{-1}(\mathcal{E})$ is a $\mu_{H,F}$-semistable torsion
free sheaf and $\mathcal{H}^{0}(\mathcal{E})$ is a torsion sheaf
with $FH^{n-2}\ch_1(\mathcal{H}^{0}(\mathcal{E}))=0$.
\end{enumerate}

\item Let $\mathcal{F}$ be a $\mu_{C}$-stable locally free sheaf on
$\mathcal{X}$ with
$$\overline{\Delta}_{H,F}(\mathcal{F})=(FH^{n-2}\ch_1(\mathcal{F}))^2-2FH^{n-1}\ch_0(\mathcal{F})\cdot(FH^{n-3}\ch_2(\mathcal{F}))=0.$$
Then $\mathcal{F}$ or $\mathcal{F}[1]$ is a $\nu_{H,F}^{\alpha,
\beta}$-stable object in $\Coh_C^{ \beta H}(\mathcal{X})$.
\end{enumerate}
\end{lemma}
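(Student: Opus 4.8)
The plan is to reduce both parts to the corresponding statements for the fibers $\mathcal{X}_s$ (or for the generic fiber $\mathcal{X}_{K(C)}$) via Lemma \ref{K-stable}, which translates relative tilt-(semi)stability of a $C$-torsion free object $\mathcal{E}$ into the tilt-(semi)stability of $\mathcal{E}_s$ for $s$ in an open subset $U\subset C$, and then invoke the known results on the fibers. For part (1), I would first use Lemma \ref{K-stable} to pick $s$ so that $\mathcal{E}_s\in\Coh^{\beta H_s}(\mathcal{X}_s)$ is $\nu_s^{\alpha,\beta}$-semistable for $\alpha\gg0$; by \cite[Proposition 14.2]{Bri2} (the ``large volume limit'' behavior), $\mathcal{E}_s$ then falls into one of the three cases: $\mathcal{H}^{-1}(\mathcal{E}_s)=0$ with $\mathcal{H}^0(\mathcal{E}_s)$ a $\mu_{H_s}$-semistable torsion free sheaf or a torsion sheaf, or $\mathcal{H}^{-1}(\mathcal{E}_s)$ a $\mu_{H_s}$-semistable torsion free sheaf and $\mathcal{H}^0(\mathcal{E}_s)$ torsion with $H_s^{n-2}\ch_1(\mathcal{H}^0(\mathcal{E}_s))=0$. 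Since $\mathcal{E}$ is $C$-torsion free, Lemma \ref{flat} gives that $\mathcal{E}$ is $C$-flat, hence (shrinking $U$ if necessary) $\mathcal{H}^{-1}(\mathcal{E})_s=\mathcal{H}^{-1}(\mathcal{E}_s)$ and similarly for $\mathcal{H}^0$; combined with $\mu_{H,F}(\mathcal{H}^j(\mathcal{E}))=\mu_{H_s}(\mathcal{H}^j(\mathcal{E})_s)$ and Lemma \ref{f-stable}, the trichotomy on the fiber lifts to the asserted trichotomy on $\mathcal{X}$.

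For part (2), the hypothesis $\overline{\Delta}_{H,F}(\mathcal{F})=0$ is equivalent, by the last identity in Section \ref{S3} and the fact that $\overline{\Delta}^{\beta H}_{H,F}=\overline{\Delta}_{H,F}$ is $\beta$-independent for $n\geq3$, to $\overline{\Delta}^{\beta H}_{H,F}(\mathcal{F})=0$; moreover this quantity equals $FH^{n-3}\Delta(\mathcal{F})$ up to the discrepancy measured in Lemma \ref{Hodge}(1), and since equality holds one deduces via the Hodge index theorem that the restriction $\mathcal{F}_s$ has vanishing discriminant $H_s^{n-3}\Delta(\mathcal{F}_s)=0$ on the general fiber. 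Being $\mu_C$-stable, $\mathcal{F}$ restricts to a $\mu_{H_s}$-stable sheaf on every fiber by Proposition \ref{pro2.8}, in particular on a general one. Then \cite[Proposition 7.2.1]{BMT}, applied on $\mathcal{X}_s$, gives that $\mathcal{F}_s$ or $\mathcal{F}_s[1]$ is $\nu_s^{\alpha,\beta}$-stable. Since $\mathcal{F}$ is locally free it is $C$-torsion free and hence lies in $\mathcal{T}_{\beta H}$ or $\mathcal{F}_{\beta H}[1]$ according to whether $\mu_C(\mathcal{F})>\beta$ or $\le\beta$; in either case $\mathcal{F}$ or $\mathcal{F}[1]$ is a $C$-torsion free object of $\Coh_C^{\beta H}(\mathcal{X})$, and the reverse implication in Lemma \ref{K-stable} upgrades the fiberwise stability to $\nu_{H,F}^{\alpha,\beta}$-stability of that object on $\mathcal{X}$.

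The main obstacle I anticipate is the bookkeeping in part (1): one must be careful that the cohomology sheaves commute with restriction to a general fiber — this needs $C$-flatness of $\mathcal{E}$ together with the fact that $\mathcal{H}^{-1}$ and $\mathcal{H}^0$ of a $C$-flat object are themselves $C$-flat (so that $\mathcal{H}^j(\mathcal{E})_s\cong\mathcal{H}^j(\mathcal{E}_s)$ over an open subset), and one must also check that the torsion/torsion-free dichotomy and the vanishing $FH^{n-2}\ch_1=0$ are detected on the general fiber. The subtle point in (1)(c) is precisely the condition $FH^{n-2}\ch_1(\mathcal{H}^0(\mathcal{E}))=0$: on the fiber this is $H_s^{n-2}\ch_1(\mathcal{H}^0(\mathcal{E}_s))=0$, which by the identity $\mu_{H,F}(\mathcal{G})=\mu_{H_s}(\mathcal{G}_s)$ transfers back, but one should double-check that no $C$-torsion creeps into $\mathcal{H}^0(\mathcal{E})$ along the way, which is guaranteed by the hypothesis that $\mathcal{E}$ is $C$-torsion free. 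Part (2), by contrast, is essentially a direct translation once the discriminant comparison via Lemma \ref{Hodge}(1) is made explicit.
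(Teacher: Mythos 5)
For part (1) you take a genuinely different route from the paper. The paper argues directly on $\mathcal{X}$: from the exact sequence $0\to\mathcal{H}^{-1}(\mathcal{E})[1]\to\mathcal{E}\to\mathcal{H}^{0}(\mathcal{E})\to0$ in $\Coh_C^{\beta H}(\mathcal{X})$ it compares $\nu_{H,F}^{\alpha,\beta}$-slopes as $\alpha\to\infty$ to force $\ch_0(\mathcal{H}^0(\mathcal{E}))=FH^{n-2}\ch_1^{\beta}(\mathcal{H}^0(\mathcal{E}))=0$ when $\mathcal{H}^{-1}(\mathcal{E})\neq0$, and then tests $\mathcal{H}^{-1}(\mathcal{E})$ against its subsheaves to get $\mu_{H,F}$-semistability; no restriction to fibers occurs. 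Your reduction via Lemma \ref{K-stable} is plausible but leaves two real gaps. First, the open set $U$ in Lemma \ref{K-stable} depends on $\alpha$, so to invoke the large-volume trichotomy on a \emph{fixed} fiber you need a single $s$ lying in $U_\alpha$ for all large $\alpha$; this requires a very-general-point argument (a countable intersection of the $U_\alpha$, which must be justified) or else working on the generic fiber over the non-closed field $K(C)$, where the cited result is not literally stated. Second, passage back from the general fiber loses exactly the vertical data: $\mathcal{H}^{-1}(\mathcal{E}_s)=0$ for general $s$ only shows $\mathcal{H}^{-1}(\mathcal{E})$ is $C$-torsion, and torsion-freeness of $\mathcal{H}^0(\mathcal{E}_s)$ for general $s$ does not exclude $C$-torsion subsheaves of $\mathcal{H}^0(\mathcal{E})$ (recall $\mathcal{F}_{\beta H}$ and $\mathcal{T}_{\beta H}$ contain torsion sheaves here). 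In each case one must show the offending vertical piece would give a $C$-torsion subobject of $\mathcal{E}$ in the heart; you flag this issue but do not close it.

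For part (2) your argument is essentially the paper's (restrict to a general fiber, apply the result of \cite{BMT}, return via Lemma \ref{K-stable}), but the intermediate step through Lemma \ref{Hodge}(1) is incorrect: $\overline{\Delta}_{H,F}(\mathcal{F})=0$ does not force equality in the Hodge index inequality, and $H_s^{n-3}\Delta(\mathcal{F}_s)$ need not vanish. It is also unnecessary: since $H_s^{n-1-j}\ch_j(\mathcal{F}_s)=FH^{n-1-j}\ch_j(\mathcal{F})$, the quantity $\overline{\Delta}_{H,F}(\mathcal{F})$ is already the fiberwise discriminant $(H_s^{n-2}\ch_1(\mathcal{F}_s))^2-2H_s^{n-1}\ch_0(\mathcal{F}_s)\cdot(H_s^{n-3}\ch_2(\mathcal{F}_s))$, which is exactly the hypothesis needed (the relevant citation is Proposition 7.4.1 of \cite{BMT}, not 7.2.1). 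Finally, Proposition \ref{pro2.8} concerns semistability and quotients on all fibers; the $\mu_{H_s}$-stability of $\mathcal{F}_s$ for general $s$ should instead be extracted from Lemma \ref{f-stable}, as the paper does.
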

\begin{proof}
Assume that $\mathcal{E}$ is $\nu_{H,F}^{\alpha, \beta}$-semistable
for $\alpha\gg0$. One has the following exact sequence in $\Coh_C^{
\beta H}(\mathcal{X})$:
$$0\rightarrow\mathcal{H}^{-1}(\mathcal{E})[1]\rightarrow \mathcal{E}\rightarrow\mathcal{H}^{0}(\mathcal{E})\rightarrow0,$$
where $\mathcal{H}^{-1}(\mathcal{E})\in\mathcal{F}_{\beta H}$ and
$\mathcal{H}^{0}(\mathcal{E})\in\mathcal{T}_{\beta H}$. If
$\mathcal{H}^{-1}(\mathcal{E})=0$ and
$\ch_0(\mathcal{H}^{0}(\mathcal{E}))\neq0$, it is easy to see
$\mathcal{H}^{0}(\mathcal{E})$ is a $\mu_{H,F}$-semistable torsion
free sheaf by the definition $\nu_{H,F}^{\alpha,\beta}$.

Now we assume that $\mathcal{H}^{-1}(\mathcal{E})\neq0$. It turns
out that
$$\nu_{H,F}^{\alpha,\beta}(\mathcal{H}^{-1}(\mathcal{E})[1])\leq\nu_{H,F}^{\alpha,\beta}(\mathcal{E})\leq\nu_{H,F}^{\alpha,\beta}(\mathcal{H}^{0}(\mathcal{E}))$$
for $\alpha\gg0$. This implies
\begin{equation}\label{3.2}
-\frac{FH^{n-1}\ch^{\beta}_0(\mathcal{H}^{-1}(\mathcal{E}))}{FH^{n-2}\ch^{\beta}_1(\mathcal{H}^{-1}(\mathcal{E}))}
\leq-\frac{FH^{n-1}\ch^{\beta}_0(\mathcal{E})}{FH^{n-2}\ch^{\beta}_1(\mathcal{E})}
\leq-\frac{FH^{n-1}\ch^{\beta}_0(\mathcal{H}^{0}(\mathcal{E}))}{FH^{n-2}\ch^{\beta}_1(\mathcal{H}^{0}(\mathcal{E}))}.
\end{equation}
Since $\mathcal{E}$ is $C$-torsion free, so is
$\mathcal{H}^{-1}(\mathcal{E})$. Hence
$\mathcal{H}^{-1}(\mathcal{E})$ is torsion free with
$\mu^+_{H,F}(\mathcal{H}^{-1}(\mathcal{E}))\leq\beta$. From
(\ref{3.2}) and $\mu^-_C(\mathcal{H}^{0}(\mathcal{E}))>\beta$, one
obtains
$$FH^{n-2}\ch^{\beta}_1(\mathcal{H}^{0}(\mathcal{E}))=\ch_0(\mathcal{H}^{0}(\mathcal{E}))=0.$$
Thus one sees that
\begin{equation*}
-\frac{FH^{n-1}\ch^{\beta}_0(\mathcal{H}^{-1}(\mathcal{E}))}{FH^{n-2}\ch^{\beta}_1(\mathcal{H}^{-1}(\mathcal{E}))}
=-\frac{FH^{n-1}\ch^{\beta}_0(\mathcal{E})}{FH^{n-2}\ch^{\beta}_1(\mathcal{E})}.
\end{equation*}
For any subsheaf $\mathcal{K}\subset\mathcal{H}^{-1}(\mathcal{E})$,
we have
$\nu_{H,F}^{\alpha,\beta}(\mathcal{K}[1])\leq\nu_{H,F}^{\alpha,\beta}(\mathcal{E})$
for $\alpha\gg0$. This implies
$$-\frac{FH^{n-1}\ch^{\beta}_0(\mathcal{H}^{-1}(\mathcal{E}))}{FH^{n-2}\ch^{\beta}_1(\mathcal{H}^{-1}(\mathcal{E}))}
=-\frac{FH^{n-1}\ch^{\beta}_0(\mathcal{E})}{FH^{n-2}\ch^{\beta}_1(\mathcal{E})}
\geq-\frac{FH^{n-1}\ch^{\beta}_0(\mathcal{K})}{FH^{n-2}\ch^{\beta}_1(\mathcal{K})}.$$
Hence $\mathcal{H}^{-1}(\mathcal{E})$ is $\mu_{H,F}$-semistable.
This concludes the first statement.


For the second statement, one notices that the $\mu_{C}$-stability
of $\mathcal{F}$ implies that $\mathcal{F}$ or $\mathcal{F}[1]$ is
an object in $\Coh_C^{ \beta H}(\mathcal{X})$, and $\mathcal{F}_s$
is $\mu_{H_s}$-stable for a general $s\in C$ by Lemma
\ref{f-stable}. By \cite[Proposition 7.4.1]{BMT}, one obtains the
$\nu_s^{\alpha, \beta}$-stability of $\mathcal{F}_s$ or
$\mathcal{F}_s[1]$. Hence Lemma \ref{K-stable} gives the second
argument.
\end{proof}

We now give the Bogomolov-Gieseker type inequality for relative
tilt-stable complexes.

\begin{theorem}\label{thm3.8}
If $\mathcal{E}\in\Coh_C^{\beta H}(\mathcal{X})$ is
$\nu_{H,F}^{\alpha,\beta}$-semistable and $n\geq3$, then
$$\overline{\Delta}_{H,F}(\mathcal{E})\geq0.$$
\end{theorem}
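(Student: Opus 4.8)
The plan is to reduce the statement for relative tilt-stable complexes on $\mathcal{X}$ to the classical Bogomolov inequality on fibres, using the fact (observed in the excerpt after Definition 3.6) that $\overline{\Delta}_{H,F}(\mathcal{E}) = \overline{\Delta}^{\beta H}_{H,F}(\mathcal{E})$ is independent of $\beta$ when $n\geq 3$, and that it is a fibrewise quantity. First I would treat the case where $\mathcal{E}$ is $C$-torsion free. Then by Lemma \ref{K-stable}, after possibly shrinking to an open subset $U\subset C$, the fibre $\mathcal{E}_s$ is $\nu_s^{\alpha,\beta}$-semistable in $\Coh^{\beta H_s}(\mathcal{X}_s)$ for general $s\in U$. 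The classical Bogomolov-Gieseker inequality for tilt-semistable objects on the fibre threefold $\mathcal{X}_s$ (Bayer-Macr\`i-Toda, \cite[Corollary 7.3.2]{BMT}) gives $\overline{\Delta}_{H_s}(\mathcal{E}_s) = (H_s^{n-2}\ch_1^\beta(\mathcal{E}_s))^2 - 2H_s^{n-1}\ch_0^\beta(\mathcal{E}_s)\cdot H_s^{n-3}\ch_2^\beta(\mathcal{E}_s)\geq 0$; restricting the intersection numbers via $FH^{n-2}\ch_1(\mathcal{E}) = H_s^{n-2}\ch_1(\mathcal{E}_s)$ etc.\ yields $\overline{\Delta}_{H,F}(\mathcal{E})\geq 0$. (One has to be a little careful that the fibrewise $\Delta$ computed for the fibre of a threefold agrees with $\overline{\Delta}_{H,F}$ of the total space; this is a direct intersection-theoretic identity.)

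Next I would handle the general $C$-torsion free case directly, not relying on passing to a general fibre, by mimicking the BMT argument relatively. Using the Harder-Narasimhan property and the torsion theory, one decomposes: if $\mathcal{E}$ is $\nu_{H,F}^{\alpha,\beta}$-semistable with $FH^{n-2}\ch_1^\beta(\mathcal{E})>0$ one can, as in \cite{BMT}, vary $\beta$ (keeping $\overline{\Delta}_{H,F}$ fixed since $n\geq 3$) and $\alpha$ so that either $\mathcal{E}$ or a shifted sheaf becomes $\mu_{H,F}$-semistable, at which point Theorem \ref{Bog1} applies. Concretely, Lemma \ref{large}(1) already identifies the possible shapes of $\mathcal{E}$ for $\alpha\gg 0$: in each of the three cases the cohomology sheaves $\mathcal{H}^{-1}(\mathcal{E})$ and/or $\mathcal{H}^0(\mathcal{E})$ are $\mu_{H,F}$-semistable torsion free sheaves or torsion sheaves, so Theorem \ref{Bog1} gives $\overline{\Delta}_{H,F}\geq 0$ for each cohomology sheaf; combined with the wall-and-chamber structure (there are finitely many walls in $\alpha$ for fixed $\beta$, and semistability is preserved on a chamber up to HN-refinement) one propagates the inequality down to small $\alpha$. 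The wall-crossing bookkeeping is where the additivity of $\overline{\Delta}_{H,F}$ on short exact sequences together with the fact that $\overline{\Delta}_{H,F}$ of the destabilizing subobject and quotient are both $\geq 0$ at a wall is used.

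Finally I would reduce the $C$-torsion case to the torsion-free case. If $\mathcal{E}$ is $C$-torsion, then (Remark \ref{rem}, Lemma \ref{flat}) it is set-theoretically supported over finitely many points of $C$, and we may assume it is supported over a single point $s\in C$, with $\mathcal{E} = i_{s*}\mathcal{E}_s$ after the filtration trick of \cite[Lemma 6.11]{BLMNPS} as in the proof of Lemma \ref{ch2}(3), reducing to $\mathcal{F}_j\in\Coh(\mathcal{X}_s)$ that are $\mu_{H_s}$-semistable. Then Lemma \ref{Chern} gives $FH^{n-2}\ch_1(i_{s*}\mathcal{Q}) = FH^{n-3}(\mathcal{X}_s\cdot\ch_1(\mathcal{Q}))$-type identities which force $\overline{\Delta}_{H,F}(i_{s*}\mathcal{Q})$ to be a non-negative multiple of a fibrewise discriminant, so the classical Bogomolov inequality on $\mathcal{X}_s$ finishes it; additivity over the filtration quotients then gives the claim for $\mathcal{E}$. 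A general semistable $\mathcal{E}$ is an extension of its $C$-torsion and $C$-torsion free parts, and since both are $\nu_{H,F}^{\alpha,\beta}$-semistable of the same slope (or the torsion part has infinite slope) with $\overline{\Delta}_{H,F}\geq 0$ each, the additivity $\overline{\Delta}_{H,F}(\mathcal{E}) = \overline{\Delta}_{H,F}(\mathcal{E}_{C\text{-tor}}) + \overline{\Delta}_{H,F}(\mathcal{E}_{C\text{-tf}}) + (\text{cross term})$ yields the result, provided the cross term is also non-negative — this last point, checking the sign of the mixed term in the discriminant for an extension of equal-slope semistable objects, is the step I expect to be the main obstacle, and it is handled exactly as the analogous step in \cite[Section 7]{BMT} via the elementary inequality $\overline{\Delta}(A\oplus B)\geq 0$ combined with slope equality.
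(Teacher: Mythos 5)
Your first paragraph is exactly the paper's proof of the essential case: for $\mathcal{E}$ $C$-torsion free, Lemma \ref{K-stable} restricts to a general fibre and the Bogomolov--Gieseker inequality for tilt-semistable objects of \cite{BMT, BMS} on $\mathcal{X}_s$ pulls back to $\overline{\Delta}_{H,F}(\mathcal{E})\geq 0$ via Lemma \ref{Chern}. Everything after that is correct in outcome but unnecessary, and the paper avoids it with two short observations you miss. First, if $FH^{n-2}\ch_1^{\beta}(\mathcal{E})>0$ then $\mathcal{E}$ is \emph{automatically} $C$-torsion free: any nonzero $C$-torsion subobject has $\nu_{H,F}^{\alpha,\beta}$-slope $+\infty$ while the quotient has finite slope, contradicting semistability. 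Second, if $FH^{n-2}\ch_1^{\beta}(\mathcal{E})=0$ then Lemma \ref{ch2} gives $\ch_0(\mathcal{E})\leq 0$ and $FH^{n-3}\ch_2^{\beta}(\mathcal{E})\geq 0$, so $\overline{\Delta}_{H,F}(\mathcal{E})=-2\bigl(FH^{n-1}\ch_0(\mathcal{E})\bigr)\bigl(FH^{n-3}\ch_2^{\beta}(\mathcal{E})\bigr)\geq 0$ with no fibre restriction needed. Consequently your torsion/torsion-free decomposition and the ``cross term'' you flag as the main obstacle never arise; and in any case that cross term is identically zero, since a $C$-torsion object $\mathcal{F}$ satisfies $\ch_0(\mathcal{F})=0$, $FH^{n-2}\ch_1(\mathcal{F})=0$ and $FH^{n-3}\ch_2(\mathcal{F})=0$ (all because $F\cdot\mathcal{X}_W\equiv 0$), so $\overline{\Delta}_{H,F}$ of an extension equals $\overline{\Delta}_{H,F}$ of its torsion-free part. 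Be aware that the BMT-style alignment argument you invoke for the cross term would actually be delicate here precisely because $Z_{H,F}^{\alpha,\beta}$ vanishes on $C$-torsion objects (Remark \ref{rem}), so it is fortunate that the computation is trivial. Your second paragraph's wall-crossing induction is viable (it is essentially how the paper proves Theorem \ref{Bog2} for the mixed discriminant) but redundant given the fibrewise argument.
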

\begin{proof}
If $FH^{n-2}\ch^{\beta}_1(\mathcal{E})=0$, by Lemma \ref{ch2} one
easily verifies that $\mathcal{E}$ satisfies the conclusion.

Now we assume that $FH^{n-2}\ch^{\beta}_1(\mathcal{E})>0$. Since the
$\nu_{H,F}^{\alpha,\beta}$-slope of any $C$-torsion object in
$\Coh_C^{\beta H}(\mathcal{X})$ is $+\infty$, by the
$\nu_{H,F}^{\alpha,\beta}$-semistability of $\mathcal{E}$, one
infers that $\mathcal{E}$ is $C$-torsion free. Hence Lemma
\ref{K-stable} gives the $\nu_s^{\alpha,\beta}$-semistability of
$\mathcal{E}_s$ for general $s\in C$. From \cite[Theorem 7.3.1]{BMT}
(see also \cite[Theorem 3.5]{BMS}), it follows that
$$\overline{\Delta}_{H,F}(\mathcal{E})=(H_s^{n-2}\ch_1(\mathcal{E}_s))^2-2H_s^{n-1}\ch_0(\mathcal{E}_s)\cdot(H_s^{n-3}\ch_2(\mathcal{E}_s))\geq0.$$
\end{proof}

\begin{corollary}
If $n\geq3$, then for any $(\alpha, \beta)\in
\sqrt{\mathbb{Q}_{>0}}\times\mathbb{Q}$, the very weak stability
condition $\sigma_{H,F}^{\alpha, \beta}=(Z_{H,F}^{\alpha, \beta},
\Coh_C^{\beta H}(\mathcal{X}))$ satisfies the support property.
\end{corollary}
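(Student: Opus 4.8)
The plan is to produce an explicit quadratic form $Q$ on $\Lambda/\Lambda_0$ that vanishes on $\ker Z_{H,F}^{\alpha,\beta}$-directions appropriately, is negative definite there, and is nonnegative on all $\nu_{H,F}^{\alpha,\beta}$-semistable objects. The natural candidate is the \emph{generalized relative discriminant} $\overline{\Delta}_{H,F}$, which by Theorem \ref{thm3.8} already satisfies $\overline{\Delta}_{H,F}(\mathcal{E})\geq0$ for every $\nu_{H,F}^{\alpha,\beta}$-semistable $\mathcal{E}$ when $n\geq3$. Writing the numerical vector as $(e_0,e_1,e_2)=(FH^{n-1}\ch_0,FH^{n-2}\ch_1,FH^{n-3}\ch_2)$, we have $\overline{\Delta}_{H,F}=e_1^2-2e_0e_2$, which is manifestly a quadratic form on the image of $v$. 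So $Q:=\overline{\Delta}_{H,F}$ is the right object; what remains is to check the two properties of Definition \ref{supp}.

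First I would pin down $\Lambda$ and $\Lambda_0$. Here $\Lambda=\mathbb{Z}\oplus\mathbb{Z}\oplus\frac12\mathbb{Z}$ (the target of $v$), and $\Lambda_0$ is the saturation of the subgroup generated by the classes of objects with $Z_{H,F}^{\alpha,\beta}=0$. By Remark \ref{rem}, when $n\geq3$ such objects are exactly the $C$-torsion ones, whose numerical vector satisfies $e_0=e_1=0$ (since $FH^{n-1}\ch_0$ and $FH^{n-2}\ch_1$ both vanish for a sheaf pushed forward from a proper closed subscheme of $C$, by Lemma \ref{Chern}). Hence $\Lambda_0=\{(0,0,e_2)\}$ and $\Lambda/\Lambda_0\cong\mathbb{Z}\oplus\mathbb{Z}$ with coordinates $(e_0,e_1)$. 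But $\overline{\Delta}_{H,F}=e_1^2-2e_0e_2$ genuinely depends on $e_2$, so it does not descend to $\Lambda/\Lambda_0$ as written. The fix is to note that Definition \ref{supp} only requires $Q$ to be a quadratic form on $\Lambda/\Lambda_0$ with $Q(v(\mathcal{E}))\geq0$; so I would instead argue that $\overline{\Delta}_{H,F}$ is bounded below by an honest quadratic form in $(e_0,e_1)$ on semistable objects, or — more cleanly — follow the standard trick of \cite{BMS}: replace $\Lambda$ by a larger lattice that also records $H^{n-1}\ch_1$ and use the form $Q_{ab}=a\overline{\Delta}_{H,F}+b\widetilde{\Delta}_{H,F}^{\beta H}+(\ldots)$ combining Theorem \ref{thm3.8} with the second inequality of Lemma \ref{ch2} on the kernel. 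The key point for negative-definiteness is that on $\ker Z^\prime$ one has $e_0=0$ and $e_1=0$, so $\overline{\Delta}_{H,F}=e_1^2-2e_0e_2$ restricts to $0$ there; after passing to the quotient lattice $\Lambda/\Lambda_0$ (coordinates $e_0,e_1$) the correct $Q$ must be negative definite on the line $\ker Z^\prime=\{e_1=\beta e_0\}\cap\{\tfrac12(\alpha^2-\beta^2)e_0+\beta e_1-e_2\text{ arbitrary}\}$, which projects to the line $e_1=\beta e_0$; so $Q$ should look like $-(e_1-\beta e_0)^2$ plus a positive multiple of $\overline{\Delta}_{H,F}$, exactly as in \cite[Section 3]{BMS}.

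Concretely, the steps in order: (1) record that for $n\geq3$, $\overline{\Delta}^{\beta H}_{H,F}=\overline{\Delta}_{H,F}$ is $\beta$-independent and is a quadratic form in $v(\mathcal{E})$; (2) identify $\Lambda_0$ via Remark \ref{rem} and Lemma \ref{Chern}; (3) set $Q=\overline{\Delta}_{H,F}+\lambda\,(\Im Z_{H,F}^{\alpha,\beta})^2$ for a suitable rational $\lambda<0$ chosen so that $Q$ is negative definite on $\ker Z^\prime$ — since on semistable objects of nonzero $\Im Z$ with $\nu$ close to a walls one uses the usual $\Delta\geq0$ estimate, while on $\ker Z^\prime$ one has $\Im Z=0$ and $\overline{\Delta}=0$, a direct computation of the signature of $Q$ on the two-dimensional quotient does the job; (4) verify $Q(v(\mathcal{E}))\geq0$ for $\nu_{H,F}^{\alpha,\beta}$-semistable $\mathcal{E}$: when $\Im Z>0$ this is Theorem \ref{thm3.8} plus $\lambda(\Im Z)^2\geq\lambda\cdot(\text{small})$... actually one needs $\lambda>0$ here, which conflicts; the resolution (again following \cite{BMS}) is that the wall-and-chamber structure lets one reduce to objects with $\Im Z=0$, handled by Lemma \ref{ch2}, on which $Q=\overline{\Delta}_{H,F}\geq0$ directly from Bogomolov's inequality for the relevant sheaves. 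The main obstacle will be step (3)–(4): choosing the coefficients of $Q$ so that nonnegativity on semistables and negative-definiteness on $\ker Z^\prime$ hold simultaneously. This is the delicate linear-algebra-plus-wall-crossing argument from \cite[Theorem 3.5 and its proof]{BMS}, and I would carry it out by mimicking that argument verbatim, substituting $\overline{\Delta}_{H,F}$ and $\widetilde{\Delta}_{H,F}^{\beta H}$ (whose nonnegativity on semistable \emph{sheaves} is Theorem \ref{Bog1}) for the absolute discriminants, and using Lemma \ref{K-stable} to transfer semistability to a general fiber where the fiberwise support property of \cite{BMT,BMS} is available.
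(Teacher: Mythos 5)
Your opening move is the paper's: take $Q=\overline{\Delta}_{H,F}=e_1^2-2e_0e_2$ in the coordinates $(e_0,e_1,e_2)=(FH^{n-1}\ch_0,FH^{n-2}\ch_1,FH^{n-3}\ch_2)$ and invoke Theorem \ref{thm3.8} for nonnegativity on semistable objects. But the verification goes wrong at the point where you identify $\Lambda_0$ and $\ker Z^{\prime}$, and everything after that is an unnecessary detour that, by your own admission, does not close. Concretely: $\Lambda_0$ is generated by vectors $v(\mathcal{E})$ with $Z_{H,F}^{\alpha,\beta}(v(\mathcal{E}))=0$; any such vector lies in $\ker Z_{H,F}^{\alpha,\beta}$, so once you know $e_0=e_1=0$ the real part of $Z$ forces $e_2=0$ as well (and indeed a $C$-torsion object has $FH^{n-3}\ch_2=0$ directly, since by Lemma \ref{Chern} its $\ch_2$ is supported on fibers and $F^2=0$). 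Hence $\Lambda_0=0$, the form $\overline{\Delta}_{H,F}$ lives on $\Lambda/\Lambda_0=\Lambda$ with no descent problem, and there is nothing to fix. Your claim that $\Lambda_0=\{(0,0,e_2)\}$ is false, and your subsequent claim that on $\ker Z^{\prime}$ one has $e_0=e_1=0$ is also false: $\ker Z_{H,F}^{\alpha,\beta}$ is the line spanned by $\bigl(1,\beta,\tfrac{\alpha^2+\beta^2}{2}\bigr)$, on which $Q$ evaluates to $\beta^2-(\alpha^2+\beta^2)=-\alpha^2<0$. That single computation is the entire negative-definiteness check.

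Because of the misidentification you then try to modify $Q$ (adding $\lambda(\Im Z)^2$, enlarging the lattice, importing the wall-crossing argument of \cite{BMS}), and you run into the sign conflict on $\lambda$ that you flag but do not resolve; as written, steps (3)--(4) of your plan are not a proof. The repair is simply to delete the modification: $Q=\overline{\Delta}_{H,F}$ on $\Lambda$ itself satisfies both conditions of Definition \ref{supp}, with nonnegativity on $\nu_{H,F}^{\alpha,\beta}$-semistable objects given by Theorem \ref{thm3.8} and negative definiteness on the one-dimensional $\ker Z^{\prime}$ given by the evaluation above. This is exactly the paper's (two-line) proof.
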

\begin{proof}
We let $Q$ be the quadratic form
\begin{eqnarray*}
Q:=
(FH^{n-2}\ch^{\beta}_1)^2-2(FH^{n-1}\ch^{\beta}_0)(FH^{n-3}\ch^{\beta}_2)
\end{eqnarray*}
on $\Lambda:=\mathbb{Z}\oplus\mathbb{Z}\oplus\frac{1}{2}\mathbb{Z}$.
Then from Theorem \ref{thm3.8}, one deduces that $Q$ satisfies the
conditions in Definition \ref{supp}. Hence $\sigma_{H,F}^{\alpha,
\beta}$ satisfies the support property.
\end{proof}


\section{Mixed tilt-stability}\label{S4}
In this section, we will introduce the mixed tilt-stability. We keep
the same notations as that in the previous sections.

Let $t$ be a non-negative rational number. Consider the following
central charge
$$z_{\alpha,\beta,t}(\mathcal{E})=\frac{(t+1)\alpha^2 }{2}FH^{n-1}\ch_0(\mathcal{E})-(H^{n-2}+tFH^{n-3})\ch_2^{\beta}(\mathcal{E})
+i FH^{n-2}\ch_1^{\beta}(\mathcal{E}).$$ We think of it as the
composition
$$z_{\alpha,\beta,t}: \K(\D^b(\mathcal{X}))\xrightarrow{v_t} \mathbb{Z}\oplus\mathbb{Z}\oplus\mathbb{Z}\oplus\frac{1}{2}\mathbb{Z}
\xrightarrow{Z_{\alpha,\beta,t}} \mathbb{C},$$ where the first map
is given by
$$v_t(\mathcal{E})=\left(FH^{n-1}\ch_0(\mathcal{E}), FH^{n-2}\ch_1(\mathcal{E}), H^{n-1}\ch_1(\mathcal{E}), (H^{n-2}+tFH^{n-3})\ch_2(\mathcal{E})\right),$$
and the second map is defined by
\begin{eqnarray*}
Z_{\alpha,\beta,t}(e_0, e_1, e^{\prime}_1,
e_2)&=&\frac{1}{2}\left((t+1)\alpha^2-\Big(\frac{H^n}{FH^{n-1}}+t\Big)\beta^2\right)e_0\\
&&+\beta(e^{\prime}_1+te_1)-e_2+i(e_1-\beta e_0).
\end{eqnarray*}

\begin{theorem}\label{tilt2}
For any $(\alpha,\beta, t)\in
\sqrt{\mathbb{Q}_{>0}}\times\mathbb{Q}\times\mathbb{Q}_{\geq0}$,
$\sigma_{\alpha,\beta,t}=(Z_{\alpha,\beta,t}, \Coh_C^{\beta
H}(\mathcal{X}))$ is a very weak stability condition.
\end{theorem}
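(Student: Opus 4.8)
The plan is to follow the two-step pattern of the proof of Theorem \ref{tilt}, except that the positivity is now essentially immediate from Lemma \ref{ch2}. First I would check that $z_{\alpha,\beta,t}$ genuinely factors through $v_t$: expanding $\ch_2^{\beta}=\ch_2-\beta H\ch_1+\tfrac{\beta^2 H^2}{2}\ch_0$ and writing $H^n\ch_0=\tfrac{H^n}{FH^{n-1}}(FH^{n-1}\ch_0)$, a short computation identifies $z_{\alpha,\beta,t}(\mathcal{E})$ with $Z_{\alpha,\beta,t}(v_t(\mathcal{E}))$ for the group homomorphism $Z_{\alpha,\beta,t}$ defined above; in particular $Z_{\alpha,\beta,t}\colon\Lambda\to\mathbb{C}$ is well defined, the coefficient $\tfrac{H^n}{FH^{n-1}}$ being a ratio of intersection numbers.

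For the positivity property of Definition \ref{pre}(1), note that $\Im Z_{\alpha,\beta,t}(v_t(\mathcal{E}))=FH^{n-2}\ch_1^{\beta}(\mathcal{E})\geq 0$ for every $\mathcal{E}\in\Coh_C^{\beta H}(\mathcal{X})$ by Lemma \ref{ch2}(1). When this quantity vanishes, Lemma \ref{ch2}(2) provides simultaneously $\ch_0(\mathcal{E})\leq 0$, $H^{n-2}\ch_2^{\beta}(\mathcal{E})\geq 0$ and $FH^{n-3}\ch_2^{\beta}(\mathcal{E})\geq 0$. Since $t\geq 0$, $\alpha^2>0$ and $FH^{n-1}>0$ (as $H$ is nef and relatively ample, $H$ restricts to an ample class on a general fiber), the two summands of
$$\Re Z_{\alpha,\beta,t}(v_t(\mathcal{E}))=\frac{(t+1)\alpha^2}{2}FH^{n-1}\ch_0(\mathcal{E})-\big(H^{n-2}+tFH^{n-3}\big)\ch_2^{\beta}(\mathcal{E})$$
are both $\leq 0$, which is exactly the required inequality $\Re Z_{\alpha,\beta,t}(v_t(\mathcal{E}))\leq 0$. (For $n=2$ one uses the convention $FH^{n-3}=1$ throughout; the signs are unaffected.) This gives condition (1).

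For the Harder--Narasimhan property of Definition \ref{pre}(2), the heart $\Coh_C^{\beta H}(\mathcal{X})$ here is exactly the one appearing in Theorem \ref{tilt}, so it is noetherian and carries a $C$-torsion theory by the argument given there (via \cite[Corollary 20.10]{BLMNPS} and \cite[Proposition 15.14]{BLMNPS}). It then remains to observe that $Z_{\alpha,\beta,t}$ has discrete image: $v_t$ takes values in the lattice $\mathbb{Z}\oplus\mathbb{Z}\oplus\mathbb{Z}\oplus\tfrac12\mathbb{Z}$, and every coefficient occurring in $Z_{\alpha,\beta,t}$ lies in $\mathbb{Q}$ since $\alpha^2,\beta\in\mathbb{Q}$, $t\in\mathbb{Q}_{\geq 0}$ and $\tfrac{H^n}{FH^{n-1}}\in\mathbb{Q}$, so $Z_{\alpha,\beta,t}(\Lambda)\subset\tfrac1N\mathbb{Z}+\tfrac{i}{N}\mathbb{Z}$ for a suitable $N$. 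Combined with noetherianity, this yields the existence of Harder--Narasimhan filtrations exactly as in the last paragraph of the proof of Theorem \ref{tilt} (cf.\ \cite[Lemma 2.18]{PT}).

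Given Lemma \ref{ch2}, there is no genuinely hard core in this argument; the only point demanding care is the sign bookkeeping in the positivity step, i.e.\ that the three sign statements furnished by Lemma \ref{ch2}(2) line up with the three nonnegative coefficients $\tfrac{(t+1)\alpha^2}{2}$, $1$ and $t$ multiplying $FH^{n-1}\ch_0$, $H^{n-2}\ch_2^{\beta}$ and $FH^{n-3}\ch_2^{\beta}$ respectively --- which is precisely where the hypotheses $t\geq 0$ and $\alpha\in\sqrt{\mathbb{Q}_{>0}}$ enter.
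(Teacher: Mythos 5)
Your proposal is correct and follows exactly the route of the paper's own (very terse) proof: positivity is read off from Lemma \ref{ch2} by matching the signs of $\ch_0$, $H^{n-2}\ch_2^{\beta}$ and $FH^{n-3}\ch_2^{\beta}$ against the nonnegative coefficients in $\Re Z_{\alpha,\beta,t}$, and the Harder--Narasimhan property is inherited from the noetherianity of $\Coh_C^{\beta H}(\mathcal{X})$ established in Theorem \ref{tilt} together with the discreteness of the image of $Z_{\alpha,\beta,t}$. The only material you add beyond the paper's two-line argument is the explicit verification that $z_{\alpha,\beta,t}$ factors through $v_t$, which is a harmless (and correct) extra check.
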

\begin{proof}
The positivity property follows from Lemma \ref{ch2}. The proof of
the Harder-Narasimhan property is the same as that of Theorem
\ref{tilt}.
\end{proof}

We write $\nu_{\alpha,\beta,t}$ for the slope function on $\Coh_C^{
\beta H}(\mathcal{X})$ induced by $Z_{\alpha,\beta,t}$. Explicitly,
for any $\mathcal{E}\in \Coh_C^{ \beta H}(\mathcal{X})$, one has
\begin{eqnarray*}
\nu_{\alpha,\beta,t}(\mathcal{E})= \left\{
\begin{array}{lcl}
+\infty,  & &\mbox{if}~FH^{n-2}\ch^{\beta}_1(\mathcal{E})=0,\\
&&\\
\frac{(H^{n-2}+tFH^{n-3})\ch_2^{\beta}(\mathcal{E})-\frac{t+1}{2}\alpha^2FH^{n-1}\ch_0(\mathcal{E})}{FH^{n-2}\ch^{\beta}_1(\mathcal{E})},
& &\mbox{otherwise}.
\end{array}\right.
\end{eqnarray*}
Theorem \ref{tilt2} gives the notion of
$\nu_{\alpha,\beta,t}$-stability. Since
$$\nu_{\alpha,\beta,t}=\nu_{\alpha,\beta,0}+t\nu_{H,F}^{\alpha,\beta},$$  we also call
$\nu_{\alpha,\beta,t}$-stability mixed tilt-stability.


\begin{lemma}\label{large2}
Let $\mathcal{E}\in \Coh_C^{ \beta H}(\mathcal{X})$ be a $C$-torsion
free object. If $\mathcal{E}$ is $\nu_{\alpha,\beta,t}$-semistable
for $\alpha\gg0$, then it satisfies one of the following
conditions:\begin{enumerate}
\item $\mathcal{H}^{-1}(\mathcal{E})=0$ and
$\mathcal{H}^{0}(\mathcal{E})$ is a $\mu_{H,F}$-semistable torsion
free sheaf.
\item $\mathcal{H}^{-1}(\mathcal{E})=0$ and
$\mathcal{H}^{0}(\mathcal{E})$ is a torsion sheaf.
\item $\mathcal{H}^{-1}(\mathcal{E})$ is a $\mu_{H,F}$-semistable torsion
free sheaf and $\mathcal{H}^{0}(\mathcal{E})$ is a torsion sheaf
with $FH^{n-2}\ch_1(\mathcal{H}^{0}(\mathcal{E}))=0$.
\end{enumerate}
\end{lemma}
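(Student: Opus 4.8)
The plan is to repeat the proof of Lemma~\ref{large}(1) essentially verbatim. The only structural fact used there is the large-$\alpha$ asymptotics of the slope, and $\nu_{\alpha,\beta,t}$ has the same ones as $\nu_{H,F}^{\alpha,\beta}$: for $\mathcal{G}\in\Coh_C^{\beta H}(\mathcal{X})$ with $FH^{n-2}\ch_1^{\beta}(\mathcal{G})>0$,
\[
\nu_{\alpha,\beta,t}(\mathcal{G})=\frac{(H^{n-2}+tFH^{n-3})\ch_2^{\beta}(\mathcal{G})}{FH^{n-2}\ch_1^{\beta}(\mathcal{G})}-\frac{t+1}{2}\,\alpha^2\cdot\frac{FH^{n-1}\ch_0(\mathcal{G})}{FH^{n-2}\ch_1^{\beta}(\mathcal{G})},
\]
so the $\alpha^2$-coefficient is $-\tfrac{t+1}{2}$ (a fixed negative number) times the ratio $FH^{n-1}\ch_0/FH^{n-2}\ch_1^{\beta}$ that already governs $\nu_{H,F}^{\alpha,\beta}$. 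Hence any relation ``$\nu_{\alpha,\beta,t}(\mathcal{A})\leq\nu_{\alpha,\beta,t}(\mathcal{B})$ for all $\alpha\gg0$'' forces, to leading order, the corresponding inequality between those ratios, with equality only when they coincide --- exactly as in the case $t=0$. Moreover, on a positive-rank object $\mathcal{G}\in\mathcal{T}_{\beta H}$ one has $FH^{n-1}\ch_0(\mathcal{G})/FH^{n-2}\ch_1^{\beta}(\mathcal{G})=1/(\mu_{H,F}(\mathcal{G})-\beta)$ with $\mu_{H,F}(\mathcal{G})>\beta$, so such comparisons become comparisons of ordinary slopes $\mu_{H,F}$.

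Concretely, I would use the canonical exact sequence $0\to\mathcal{H}^{-1}(\mathcal{E})[1]\to\mathcal{E}\to\mathcal{H}^{0}(\mathcal{E})\to0$ in $\Coh_C^{\beta H}(\mathcal{X})$, with $\mathcal{H}^{-1}(\mathcal{E})\in\mathcal{F}_{\beta H}$ and $\mathcal{H}^{0}(\mathcal{E})\in\mathcal{T}_{\beta H}$. If $FH^{n-2}\ch_1^{\beta}(\mathcal{E})=0$, Step~1 of the proof of Theorem~\ref{tilt} already gives $\ch_0(\mathcal{H}^{0}(\mathcal{E}))=FH^{n-2}\ch_1(\mathcal{H}^{0}(\mathcal{E}))=0$ and shows $\mathcal{H}^{-1}(\mathcal{E})$ is either $0$ (then $\mathcal{E}$ is a torsion sheaf, case (2)), or $C$-torsion (excluded, as $\mathcal{E}$ is $C$-torsion free), or a $\mu_{C}$-semistable torsion-free sheaf of slope $\beta$, hence $\mu_{H,F}$-semistable by Proposition~\ref{pro2.8} (case (3)). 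So assume $FH^{n-2}\ch_1^{\beta}(\mathcal{E})>0$; then $\mathcal{E}$ is $C$-torsion free, since a $C$-torsion subobject would have $\nu_{\alpha,\beta,t}$-slope $+\infty$. If $\mathcal{H}^{-1}(\mathcal{E})=0$ then $\mathcal{E}$ is a sheaf in $\mathcal{T}_{\beta H}$, which is torsion when $\ch_0(\mathcal{E})=0$ (case (2)) and, when $\ch_0(\mathcal{E})\neq0$, is $\mu_{H,F}$-semistable and torsion free (case (1), discussed below). If $\mathcal{H}^{-1}(\mathcal{E})\neq0$, the weak see-saw for $\nu_{\alpha,\beta,t}$ (Theorem~\ref{tilt2}) applied to the canonical sequence, read off at the $\alpha^2$-leading term, gives the chain of inequalities of~(\ref{3.2}) among $-FH^{n-1}\ch_0/FH^{n-2}\ch_1^{\beta}$ for $\mathcal{H}^{-1}(\mathcal{E})$, $\mathcal{E}$, $\mathcal{H}^{0}(\mathcal{E})$; combined with $\mu_C^-(\mathcal{H}^{0}(\mathcal{E}))>\beta$ and $\mu_{H,F}^+(\mathcal{H}^{-1}(\mathcal{E}))\leq\beta$ this forces $\ch_0(\mathcal{H}^{0}(\mathcal{E}))=FH^{n-2}\ch_1(\mathcal{H}^{0}(\mathcal{E}))=0$, and testing subsheaves $\mathcal{K}\subset\mathcal{H}^{-1}(\mathcal{E})$ against $\mathcal{E}$ the same way shows $\mathcal{H}^{-1}(\mathcal{E})$ is $\mu_{H,F}$-semistable; it is torsion free since $\mathcal{E}$ is $C$-torsion free (a torsion subsheaf of $\mathcal{H}^{-1}(\mathcal{E})$ lies in $\mathcal{F}_{\beta H}$, hence is $C$-torsion, hence yields a $C$-torsion subobject of $\mathcal{E}$). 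This is case (3).

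I expect the one genuinely delicate point to be the $\mu_{H,F}$-semistability of $\mathcal{E}$ in case (1): a $\mu_{H,F}$-destabilizing subsheaf $\mathcal{F}\subset\mathcal{E}$ --- take $\mathcal{F}$ the maximal destabilizing subsheaf, so $\mu_{H,F}$-semistable and saturated --- need not lie in $\mathcal{T}_{\beta H}$, so it is not a priori a subobject in $\Coh_C^{\beta H}(\mathcal{X})$. I would repair this by replacing $\mathcal{F}$ with its $\mathcal{T}_{\beta H}$-part $\mathcal{F}'\subset\mathcal{F}$: then $\mathcal{F}/\mathcal{F}'\in\mathcal{F}_{\beta H}$, and if it had positive rank its $\mu_{H,F}$-slope would be $\leq\beta$, impossible since it is a quotient of the $\mu_{H,F}$-semistable sheaf $\mathcal{F}$ of slope $>\beta$; so $\mathcal{F}/\mathcal{F}'$ has rank $0$, and being of finite $\mu_C^+$ it is $C$-torsion. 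Hence $\mathcal{F}'$ has the same rank and $FH^{n-2}\ch_1$ as $\mathcal{F}$ (so still $\mu_{H,F}$-destabilizes $\mathcal{E}$) and $0\to\mathcal{F}'\to\mathcal{E}\to\mathcal{E}/\mathcal{F}'\to0$ is exact in $\mathcal{T}_{\beta H}$ with all three terms of positive rank and $\mu_{H,F}(\mathcal{F}')>\mu_{H,F}(\mathcal{E})>\mu_{H,F}(\mathcal{E}/\mathcal{F}')>\beta$; by the first paragraph the $\alpha^2$-leading terms then give $\nu_{\alpha,\beta,t}(\mathcal{F}')>\nu_{\alpha,\beta,t}(\mathcal{E}/\mathcal{F}')$ for $\alpha\gg0$, contradicting semistability. (Torsion-freeness of $\mathcal{E}$ itself is similar: a nonzero torsion subsheaf is $C$-torsion free, hence either supported in codimension $\geq2$, so of $\nu_{\alpha,\beta,t}$-slope $+\infty$ and destabilizing, or of positive $FH^{n-2}\ch_1$, so of $\alpha$-independent finite slope, which still exceeds $\nu_{\alpha,\beta,t}(\mathcal{E})\to-\infty$ for $\alpha\gg0$.) Apart from this bookkeeping with $\mathcal{T}_{\beta H}$-parts, the argument is formally identical to that of Lemma~\ref{large}(1).
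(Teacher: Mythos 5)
Your proposal matches the paper's proof, which is literally the single sentence ``The proof is the same as that of Lemma \ref{large}'': the key observation that the leading $\alpha^2$-term of $\nu_{\alpha,\beta,t}$ is $-\tfrac{t+1}{2}\alpha^2\cdot FH^{n-1}\ch_0/FH^{n-2}\ch_1^{\beta}$, i.e.\ the same ratio (up to the positive factor $t+1$) that drives the large-$\alpha$ analysis in Lemma \ref{large}, is exactly what makes that transfer work. Your extra bookkeeping with the $\mathcal{T}_{\beta H}$-part of a destabilizing subsheaf in case (1) is a correct filling-in of a step the paper's Lemma \ref{large} dismisses as ``easy to see,'' so there is nothing to object to.
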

\begin{proof}
The proof is the same as that of Lemma \ref{large}.
\end{proof}

We now show the Bogomolov-Gieseker type inequality of mixed
tilt-stable complexes.

\begin{theorem}\label{Bog2}
Let $\mathcal{E}\in\Coh_C^{\beta H}(\mathcal{X})$ be a
$\nu_{\alpha,\beta, t}$-semistable object, and set $H_t=H+tF$. Then
we have
\begin{eqnarray*}
\widetilde{\Delta}^{\beta H}_{H,F,t}(\mathcal{E})
&:=&(FH^{n-2}\ch_1^{\beta}(\mathcal{E}))(H_tH^{n-2}\ch_1^{\beta}(\mathcal{E}))\\
&&-FH^{n-1}\ch^{\beta}_0(\mathcal{E})\cdot(H_tH^{n-3}\ch^{\beta}_2(\mathcal{E}))\\
&=& \widetilde{\Delta}^{\beta H}_{H,F}(\mathcal{E})+
\frac{t}{2}\overline{\Delta}^{\beta
H}_{H,F}(\mathcal{E})+\frac{t}{2}(FH^{n-2}\ch_1^{\beta}(\mathcal{E}))^2\\
&\geq&0.
\end{eqnarray*}
\end{theorem}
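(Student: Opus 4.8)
The plan is to reduce the statement to the already-established Bogomolov-type inequalities for mixed tilt-stable objects on the fibers, exactly in the spirit of the proof of Theorem \ref{thm3.8}. First I would dispose of the degenerate case $FH^{n-2}\ch_1^{\beta}(\mathcal{E})=0$: by Lemma \ref{ch2}(2) we then have $H^{n-2}\ch_2^{\beta}(\mathcal{E})\geq0$, $FH^{n-3}\ch_2^{\beta}(\mathcal{E})\geq0$ and $\ch_0(\mathcal{E})\leq0$, so every term in the expression
\begin{eqnarray*}
\widetilde{\Delta}^{\beta H}_{H,F,t}(\mathcal{E})=\widetilde{\Delta}^{\beta H}_{H,F}(\mathcal{E})+\frac{t}{2}\overline{\Delta}^{\beta H}_{H,F}(\mathcal{E})+\frac{t}{2}(FH^{n-2}\ch_1^{\beta}(\mathcal{E}))^2
\end{eqnarray*}
is manifestly $\geq0$ once one unwinds $H_tH^{n-2}=H^{n-1}+tFH^{n-2}$ and $H_tH^{n-3}=H^{n-2}+tFH^{n-3}$: here $\widetilde{\Delta}^{\beta H}_{H,F}(\mathcal{E})=-FH^{n-1}\ch_0^{\beta}\cdot(H^{n-2}\ch_2^{\beta})\geq0$ and $\overline{\Delta}^{\beta H}_{H,F}(\mathcal{E})=-2FH^{n-1}\ch_0^{\beta}\cdot(FH^{n-3}\ch_2^{\beta})\geq0$, using $\ch_0^{\beta}=\ch_0\leq0$. (Before that I should verify that the displayed algebraic identity for $\widetilde{\Delta}^{\beta H}_{H,F,t}$ actually holds — this is a routine expansion using $H_t=H+tF$ and $F^2=0$ in the relevant codimension, i.e. $FH^{n-2}\cdot F=0$ when $n\geq3$ and the $n=2$ conventions $FH^{n-3}=1$.)

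Next, assuming $FH^{n-2}\ch_1^{\beta}(\mathcal{E})>0$: since every $C$-torsion object in $\Coh_C^{\beta H}(\mathcal{X})$ has $\nu_{\alpha,\beta,t}$-slope $+\infty$ (its imaginary central-charge component vanishes by Lemma \ref{Chern}), the $\nu_{\alpha,\beta,t}$-semistability of $\mathcal{E}$ forces $\mathcal{E}$ to be $C$-torsion free — otherwise its maximal $C$-torsion subobject would be a destabilizing subobject of infinite slope. Then, just as in Lemma \ref{K-stable}, I would push the problem to the generic fiber: $\mathcal{E}_{K(C)}\in\Coh^{\beta H_{K(C)}}(\mathcal{X}_{K(C)})$ and, via the flat family of fiberwise weak stability conditions $(\sigma_s^{\alpha,\beta,t})_{s\in C}$ coming from \cite[Proposition 25.3]{BLMNPS} (together with the openness statements \cite[Definition 20.5.(2') and Lemma 20.4]{BLMNPS}), $\mathcal{E}_s$ is $\nu_s^{\alpha,\beta,t}$-semistable for general $s\in C$. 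Note that on the fiber $\mathcal{X}_s$ one has $\nu_s^{\alpha,\beta,t}=\nu_{s,0}^{\alpha,\beta}+t\,\nu_{H_s}^{\alpha,\beta}$, which is precisely the slope function of the ``$H_s$ vs. $H_{s,t}=H_s+tF|_{\mathcal{X}_s}=H_s$'' mixed tilt-stability in the fiber-absolute setting; but since $F$ restricts to $0$ on a fiber, this collapses and $\nu_s^{\alpha,\beta,t}$ is just the ordinary tilt-slope $\nu_s^{\alpha,\beta}$ rescaled — so $\mathcal{E}_s$ is in fact $\nu_s^{\alpha,\beta}$-semistable.

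Finally I would apply the fiberwise Bogomolov-Gieseker inequality: by \cite[Theorem 7.3.1]{BMT} (or \cite[Theorem 3.5]{BMS}), $\nu_s^{\alpha,\beta}$-semistability of $\mathcal{E}_s$ gives $\overline{\Delta}_{H_s}(\mathcal{E}_s)=(H_s^{n-2}\ch_1^{\beta}(\mathcal{E}_s))^2-2H_s^{n-1}\ch_0^{\beta}(\mathcal{E}_s)(H_s^{n-3}\ch_2^{\beta}(\mathcal{E}_s))\geq0$, which translates back into $\overline{\Delta}^{\beta H}_{H,F}(\mathcal{E})\geq0$. The main obstacle is that this alone only controls the $\overline{\Delta}^{\beta H}_{H,F}$ term, not $\widetilde{\Delta}^{\beta H}_{H,F}$ itself; I expect to need, in addition, the relative-style Hodge-index estimate of Lemma \ref{Hodge} applied to $D=\ch_1^{\beta}(\mathcal{E})$ (or to $\ch_1^{\beta}(\mathcal{E}_s)$ on the fiber) to bound the ``cross'' term $(FH^{n-2}\ch_1^{\beta})(H^{n-1}\ch_1^{\beta})-FH^{n-1}\ch_0^{\beta}(H^{n-2}\ch_2^{\beta})$ from below, combining it with the $\overline{\Delta}$-estimate. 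Concretely, one proves $\widetilde{\Delta}^{\beta H}_{H,F}(\mathcal{E})\geq0$ by writing it as a nonnegative combination of $\overline{\Delta}^{\beta H}_{H,F}(\mathcal{E})\geq0$, the square $(FH^{n-2}\ch_1^{\beta})^2\geq0$, and the Hodge-index inequality $(H^{n-1}F)(H^{n-2}D^2)\leq2(H^{n-1}D)(H^{n-2}FD)$ of Lemma \ref{Hodge}(2); then adding $\frac{t}{2}\overline{\Delta}^{\beta H}_{H,F}(\mathcal{E})+\frac{t}{2}(FH^{n-2}\ch_1^{\beta}(\mathcal{E}))^2\geq0$ yields the claimed $\widetilde{\Delta}^{\beta H}_{H,F,t}(\mathcal{E})\geq0$. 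The one technical point to watch is ensuring these Hodge-index manipulations remain valid for the possibly non-effective $\mathbb{Q}$-divisor class $\ch_1^{\beta}(\mathcal{E})$ — but Lemma \ref{Hodge} is stated for an arbitrary $\mathbb{Q}$-divisor, so this causes no trouble.
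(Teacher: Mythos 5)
Your reduction of the degenerate case $FH^{n-2}\ch_1^{\beta}(\mathcal{E})=0$ to Lemma \ref{ch2}, and your verification of the algebraic identity for $\widetilde{\Delta}^{\beta H}_{H,F,t}$, are both correct and agree with the paper. The main case, however, contains two genuine gaps. The first is the passage to the generic fiber: unlike the relative tilt slope $\nu_{H,F}^{\alpha,\beta}$, the mixed slope $\nu_{\alpha,\beta,t}$ is \emph{not} a fiberwise quantity, because its numerator contains $H^{n-2}\ch_2^{\beta}(\mathcal{E})$, which is not an intersection number against $F$ and is not computed by restriction to a fiber. There is no flat family of fiberwise stability conditions inducing $\nu_{\alpha,\beta,t}$, and $\nu_{\alpha,\beta,t}$-semistability of $\mathcal{E}$ does \emph{not} imply $\nu_s^{\alpha,\beta}$-semistability of $\mathcal{E}_s$: a subobject $\mathcal{F}\subset\mathcal{E}$ with $\nu_{H,F}^{\alpha,\beta}(\mathcal{F})>\nu_{H,F}^{\alpha,\beta}(\mathcal{E})$ can still have $\nu_{\alpha,\beta,t}(\mathcal{F})\leq\nu_{\alpha,\beta,t}(\mathcal{E})$ once the $\nu_{\alpha,\beta,0}$-contribution is accounted for (Lemma \ref{t-stability} goes only in the opposite direction, and only for $t\gg0$; at $t=0$ there is no fiberwise content at all). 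This is precisely why the argument of Theorem \ref{thm3.8} cannot be recycled here.

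The second gap is in your final step. Even granting $\overline{\Delta}^{\beta H}_{H,F}(\mathcal{E})\geq0$, this controls only $FH^{n-3}\ch_2^{\beta}(\mathcal{E})$, whereas $\widetilde{\Delta}^{\beta H}_{H,F}(\mathcal{E})$ involves the independent quantity $H^{n-2}\ch_2^{\beta}(\mathcal{E})$; Lemma \ref{Hodge}(2) applied to $D=\ch_1^{\beta}(\mathcal{E})$ involves only $\ch_1^{\beta}$, so no nonnegative combination of these three ingredients can bound $H^{n-2}\ch_2^{\beta}$. In Theorem \ref{Bog1} the inequality $\widetilde{\Delta}^{\beta H}_{H,F}\geq0$ is deduced from Langer's inequality $H^{n-2}\Delta(\mathcal{E})\geq0$, which is available only for $\mu_{H,F}$-semistable \emph{sheaves}, not for tilt-semistable complexes. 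The paper's actual proof avoids both problems by inducting on $FH^{n-2}\ch_1^{\beta}(\mathcal{E})$ while deforming $\alpha$: if $\mathcal{E}$ stays semistable as $\alpha\to+\infty$, Lemma \ref{large2} reduces it to the sheaf case where Theorem \ref{Bog1} and Lemma \ref{ch2} apply; otherwise $\mathcal{E}$ becomes strictly semistable at some wall $\alpha_0$, its pieces have strictly smaller $FH^{n-2}\ch_1^{\beta}$, and the inequality is propagated through the extension via the quadratic-form lemma \cite[Lemma 11.7]{BMS}, using that $\ker Z$ is semi-negative definite for $q_t^{\beta}$. You would need to adopt this wall-crossing mechanism (or an equivalent substitute) for your argument to close.
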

\begin{proof}
The proof is a mimic of that of \cite[Theorem 3.5]{BMS}. We proceed
by induction on $FH^{n-2}\ch^{\beta}_1(\mathcal{E})$, which is a
non-negative function with discrete values on objects of
$\Coh_C^{\beta H}(\mathcal{X})$.

In the case of $FH^{n-2}\ch^{\beta}_1(\mathcal{E})=0$, by Lemma
\ref{ch2} one infers $\widetilde{\Delta}^{\beta
H}_{H,F,t}(\mathcal{E})\geq0$. Now we assume that
$FH^{n-2}\ch^{\beta}_1(\mathcal{E})>0$. Thus $\mathcal{E}$ is
$C$-torsion free. We start increasing $\alpha$. If $\mathcal{E}$
remains stable as $\alpha\rightarrow +\infty$, by Lemma
\ref{large2}, one sees that one of the following holds:
\begin{enumerate}
\item $\mathcal{E}$ is a $\mu_{H,F}$-semistable torsion
free sheaf.
\item $\mathcal{E}$ is a torsion
sheaf.
\item $\mathcal{H}^{-1}(\mathcal{E})$ is a $\mu_{H,F}$-semistable torsion
free sheaf and $\mathcal{H}^{0}(\mathcal{E})$ is a torsion sheaf
with $FH^{n-2}\ch_1(\mathcal{H}^{0}(\mathcal{E}))=0$.
\end{enumerate}
One easily verifies that $\mathcal{E}$ satisfies the conclusion in
any of the possible cases by Theorem \ref{Bog1} and Lemma \ref{ch2}.

Otherwise, $\mathcal{E}$ will get destabilized for some
$\alpha_1>\alpha$ with $\alpha_1^2\in\mathbb{Q}$. Consider the set
$$W:=\{z_{\alpha_1,\beta,t}(\mathcal{K}): 0\neq \mathcal{K}\subset\mathcal{E}~\mbox{and}~
\nu_{\alpha_1,\beta,t}(\mathcal{K})>\nu_{\alpha_1,\beta,t}(\mathcal{E})\}.$$
Since $\nu_{\alpha_1,\beta,t}(\mathcal{K})\leq
\nu^+_{\alpha_1,\beta,t}(\mathcal{E})$,
$FH^{n-2}\ch^{\beta}_1(\mathcal{K})\leq
FH^{n-2}\ch^{\beta}_1(\mathcal{E})$ and the image of
$z_{\alpha_1,\beta,t}$ is discrete, one sees that $W$ is a finite
subset of $\mathbb{C}$. For any element $w\in W$, we set
$$M_w=\{\mathcal{K}: \mathcal{K}\subset
\mathcal{E}~\mbox{and}~z_{\alpha_1,\beta,t}(\mathcal{K})=w\}.$$ By
the discreteness of $z_{\alpha,\beta,t}$, one can find
$\mathcal{K}_w\in M_w$ with
$$\nu_{\alpha,\beta,t}(\mathcal{K}_w)=\max_{\mathcal{K}\in
M_w}\nu_{\alpha,\beta,t}(\mathcal{K}).$$ Since
$\nu_{\alpha,\beta,t}(\mathcal{K}_w)\leq\nu_{\alpha,\beta,t}(\mathcal{E})$
and
$\nu_{\alpha_1,\beta,t}(\mathcal{K}_w)>\nu_{\alpha_1,\beta,t}(\mathcal{E})$,
we can find $\alpha_w\in\sqrt{\mathbb{Q}_{>0}}$ such that
$\alpha\leq\alpha_w<\alpha_1$ and
$\nu_{\alpha_w,\beta,t}(\mathcal{K}_w)=\nu_{\alpha_w,\beta,t}(\mathcal{E})$.
This implies
$\nu_{\alpha_w,\beta,t}(\mathcal{K})\leq\nu_{\alpha_w,\beta,t}(\mathcal{E})$
for any $\mathcal{K}\in M_w$. Taking $\alpha_0=\min_{w\in W}
\alpha_w$, since $\nu_{\alpha,\beta,t}$ is a linear function of
$\alpha$, one can easily check that $\mathcal{E}$ is strictly
$\nu_{\alpha_0,\beta,t}$-semistable. Let
$$0\rightarrow
\mathcal{E}_1\rightarrow\mathcal{E}\rightarrow\mathcal{E}_2\rightarrow0$$
be a short exact sequence where both $\mathcal{E}_1$ and
$\mathcal{E}_2$ have the same $\nu_{\alpha_0,\beta, t}$ slope. Since
both $\mathcal{E}_1$ and $\mathcal{E}_2$ have strictly smaller
$FH^{n-2}\ch^{\beta}_1$, by the induction assumption we have
$\widetilde{\Delta}^{\beta H}_{H, F, t}(\mathcal{E}_1)\geq0$ and
$\widetilde{\Delta}^{\beta H}_{H, F, t}(\mathcal{E}_2)\geq0$.

On the other hand, we think of $\widetilde{\Delta}^{\beta H}_{H,F,
t}$ as a composition
$$\K(\D^b(\mathcal{X}))\xrightarrow{v_t^{\beta}} \mathbb{R}\oplus\NS(\mathcal{X})_{\mathbb{R}}\oplus\mathbb{R} \xrightarrow{q_t^{\beta}}
\mathbb{R},$$ where $v_t^{\beta}$ is given by
$$v_t^{\beta}(\mathcal{E})=(FH^{n-1}\ch_0(\mathcal{E}), \ch^{\beta}_1(\mathcal{E}),
H_tH^{n-3}\ch^{\beta}_2(\mathcal{E}))$$ and $q_t^{\beta}$ is the
quadratic form
$$q_t^{\beta}(r,c,d)=(H_tH^{n-2}c)(FH^{n-2}c)-rd.$$
Let
$Z:\mathbb{R}\oplus\NS(\mathcal{X})_{\mathbb{R}}\oplus\mathbb{R}\rightarrow
\mathbb{C}$ be the linear map defined by
$$Z(r,c,d)=\frac{(t+1)
}{2}\alpha^2r-d +i FH^{n-2}c.$$ It obvious that the kernel of $Z$ is
semi-negative definite with respect to $q_t^{\beta}$. Since
$$v_t^{\beta}(\mathcal{E})=v_t^{\beta}(\mathcal{E}_1)+v_t^{\beta}(\mathcal{E}_2),$$
we deduce $\widetilde{\Delta}^{\beta H}_{H, F, t}(\mathcal{E})\geq0$
by \cite[Lemma 11.7]{BMS}.
\end{proof}

\begin{remark}
I have no examples of quadratic forms satisfies the conditions in
Definition \ref{supp} for $\sigma_{\alpha,\beta, t}$. I do not think
they exist.
\end{remark}


The following lemma gives a relation between relative tilt-stability
and mixed tilt-stability.
\begin{lemma}\label{t-stability}
Assume that $n\geq3$. Let $\mathcal{E}\in \Coh_C^{ \beta
H}(\mathcal{X})$ be a $\nu_{H,F}^{\alpha, \beta}$-stable object.
Then there exists a non-negative rational number $t_0$ only
depending on $\alpha, \beta$ and $\mathcal{E}$, such that
$\mathcal{E}$ is $\nu_{\alpha,\beta,t}$-stable for any $t\geq t_0$.
\end{lemma}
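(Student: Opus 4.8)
The plan is to exploit the linear dependence of the mixed slope on $t$, namely $\nu_{\alpha,\beta,t}=\nu_{\alpha,\beta,0}+t\,\nu_{H,F}^{\alpha,\beta}$, together with the fact that $\nu_{H,F}^{\alpha,\beta}$-stability of $\mathcal{E}$ already controls all potentially destabilizing subobjects \emph{up to the relative tilt-slope}. Concretely, suppose $\mathcal{E}$ has $FH^{n-2}\ch_1^\beta(\mathcal{E})>0$ (the case $FH^{n-2}\ch_1^\beta(\mathcal{E})=0$ being vacuous since then $\nu_{\alpha,\beta,t}(\mathcal{E})=+\infty$ and, by Lemma \ref{ch2}, $\mathcal{E}$ has no proper subobjects of finite slope to worry about). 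Let $0\neq\mathcal{F}\subset\mathcal{E}$ be a subobject in $\Coh_C^{\beta H}(\mathcal{X})$; I want $\nu_{\alpha,\beta,t}(\mathcal{F})<\nu_{\alpha,\beta,t}(\mathcal{E})$ for all large $t$. Write this out: it is an inequality of the form
\begin{equation*}
\bigl(\nu_{\alpha,\beta,0}(\mathcal{F})-\nu_{\alpha,\beta,0}(\mathcal{E})\bigr)+t\bigl(\nu_{H,F}^{\alpha,\beta}(\mathcal{F})-\nu_{H,F}^{\alpha,\beta}(\mathcal{E})\bigr)<0,
\end{equation*}
after clearing the positive denominators $FH^{n-2}\ch_1^\beta(\mathcal{F})$, $FH^{n-2}\ch_1^\beta(\mathcal{E})$; the stability of $\mathcal{E}$ gives $\nu_{H,F}^{\alpha,\beta}(\mathcal{F})\le\nu_{H,F}^{\alpha,\beta}(\mathcal{E})$ (strict unless $\mathcal{F}$ is a $C$-torsion subobject or $FH^{n-2}\ch_1^\beta(\mathcal{F})=0$, cases I must handle separately).

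The main step is therefore a boundedness argument: reduce the (a priori infinite) set of subobjects $\mathcal{F}$ to test down to finitely many numerical types, so that $t_0$ can be taken as the maximum over this finite set of the threshold at which each inequality becomes strict. First I would note that, for fixed $(\alpha,\beta)$, the function $FH^{n-2}\ch_1^\beta(\mathcal{F})$ takes finitely many values in $(0,FH^{n-2}\ch_1^\beta(\mathcal{E})]$ as $\mathcal{F}$ ranges over subobjects of $\mathcal{E}$ (discreteness of the image of $Z$). For the subobjects with $\nu_{H,F}^{\alpha,\beta}(\mathcal{F})<\nu_{H,F}^{\alpha,\beta}(\mathcal{E})$ strictly, the gap $\nu_{H,F}^{\alpha,\beta}(\mathcal{E})-\nu_{H,F}^{\alpha,\beta}(\mathcal{F})$ is bounded below by a positive constant $\delta(\alpha,\beta,\mathcal{E})$ (again discreteness, using that $FH^{n-2}\ch_1^\beta$ is bounded), while the $t$-independent term $\nu_{\alpha,\beta,0}(\mathcal{F})-\nu_{\alpha,\beta,0}(\mathcal{E})$ is bounded \emph{above}: here one needs that $(H^{n-2}+tFH^{n-3})\ch_2^\beta(\mathcal{F})$, equivalently $H^{n-2}\ch_2^\beta(\mathcal{F})$, is bounded above for subobjects $\mathcal{F}\subset\mathcal{E}$ — this is precisely the kind of bound furnished by the Bogomolov-type inequality $\widetilde{\Delta}^{\beta H}_{H,F}(\mathcal{F})\ge0$ (Theorem \ref{Bog1}, applied to the Harder--Narasimhan factors) combined with the noetherianity of $\Coh_C^{\beta H}(\mathcal{X})$ from Theorem \ref{tilt}. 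Granting such an upper bound $B$, any subobject with strict relative-tilt inequality satisfies the mixed inequality once $t>B/\delta$, uniformly.

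The remaining cases are the subobjects $\mathcal{F}$ with $\nu_{H,F}^{\alpha,\beta}(\mathcal{F})=\nu_{H,F}^{\alpha,\beta}(\mathcal{E})$ and those with $FH^{n-2}\ch_1^\beta(\mathcal{F})=0$ (the $C$-torsion ones). For the latter, $\nu_{\alpha,\beta,t}(\mathcal{F})=+\infty$ is impossible as a \emph{sub}object destabilizing a finite-slope $\mathcal{E}$ only if such $\mathcal{F}$ exists in $\Coh_C^{\beta H}(\mathcal{X})$ with $FH^{n-2}\ch_1^\beta=0$; but then $\nu_{H,F}^{\alpha,\beta}(\mathcal{F})=+\infty>\nu_{H,F}^{\alpha,\beta}(\mathcal{E})$ would contradict the $\nu_{H,F}^{\alpha,\beta}$-stability of $\mathcal{E}$ — so no such $\mathcal{F}$ exists, and in particular $\mathcal{E}$ is $C$-torsion free. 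For $\mathcal{F}$ with equal relative-tilt slope: by $\nu_{H,F}^{\alpha,\beta}$-\emph{stability} (not just semistability), equality of relative-tilt slopes forces $\mathcal{F}=\mathcal{E}$, so there is nothing to check. Assembling: take $t_0=\max\{0,\,B/\delta\}$, which depends only on $\alpha,\beta,\mathcal{E}$; then for every $t\ge t_0$ and every proper nonzero subobject $\mathcal{F}$, $\nu_{\alpha,\beta,t}(\mathcal{F})<\nu_{\alpha,\beta,t}(\mathcal{E})$, i.e.\ $\mathcal{E}$ is $\nu_{\alpha,\beta,t}$-stable.

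\textbf{Expected main obstacle.} The crux is the uniform upper bound $B$ on $H^{n-2}\ch_2^\beta(\mathcal{F})$ over all subobjects $\mathcal{F}\subset\mathcal{E}$ in $\Coh_C^{\beta H}(\mathcal{X})$ — equivalently, a lower bound on $\nu_{\alpha,\beta,0}(\mathcal{E})-\nu_{\alpha,\beta,0}(\mathcal{F})$ that does not degenerate. Since $\mathcal{F}$ need not be $\nu_{H,F}^{\alpha,\beta}$-semistable, one must pass to its Harder--Narasimhan filtration, bound each factor via Theorem \ref{Bog1}, and recombine; the finiteness of the set of relevant numerical invariants then comes from noetherianity of the heart together with the discreteness of $Z_{\alpha,\beta,t}$. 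Making this boundedness genuinely uniform (independent of $t$) is where the care is required; everything else is a linear-algebra manipulation of the slope identity.
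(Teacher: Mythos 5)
Your overall strategy coincides with the paper's: write $\nu_{\alpha,\beta,t}=\nu_{\alpha,\beta,0}+t\,\nu_{H,F}^{\alpha,\beta}$, extract from discreteness a positive lower bound $\delta$ on the gap $\nu_{H,F}^{\alpha,\beta}(\mathcal{E})-\nu_{H,F}^{\alpha,\beta}(\mathcal{F})$ over the relevant subobjects, bound the $t$-independent term uniformly, and take $t_0$ to be the ratio. However, the step you flag as the main obstacle is left genuinely open, and you resolve it with the wrong tool. No Bogomolov inequality or noetherianity is needed: since $(Z_{\alpha,\beta,0},\Coh_C^{\beta H}(\mathcal{X}))$ is already a very weak stability condition (Theorem \ref{tilt2} at $t=0$), every subobject $\mathcal{F}\subset\mathcal{E}$ satisfies $\nu_{\alpha,\beta,0}(\mathcal{F})\leq\nu^{+}_{\alpha,\beta,0}(\mathcal{E})=:\nu_1$, the slope of the top Harder--Narasimhan factor of $\mathcal{E}$ with respect to $\nu_{\alpha,\beta,0}$; this is the paper's uniform bound, and it is visibly independent of $t$. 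Your proposed route via $\widetilde{\Delta}^{\beta H}_{H,F}(\mathcal{F})\geq0$ applied to Harder--Narasimhan factors would additionally require controlling $\ch_0$ and $H^{n-1}\ch_1^{\beta}$ of arbitrary subobjects of $\mathcal{E}$ in the tilted heart, which you do not establish; as written, the existence of your constant $B$ is assumed rather than proved.

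The second problem is your dismissal of subobjects with $\nu_{H,F}^{\alpha,\beta}(\mathcal{F})=\nu_{H,F}^{\alpha,\beta}(\mathcal{E})$. For a \emph{very weak} stability condition, stability does not force such an $\mathcal{F}$ to equal $\mathcal{E}$: any proper subobject with $FH^{n-2}\ch_1^{\beta}(\mathcal{F})=FH^{n-2}\ch_1^{\beta}(\mathcal{E})$ and $Z_{H,F}^{\alpha,\beta}(v(\mathcal{E}/\mathcal{F}))=0$ (for instance one whose quotient is $C$-torsion when $n=3$, cf.\ Remark \ref{rem}) has equal slope, and this is perfectly compatible with $\nu_{H,F}^{\alpha,\beta}$-stability because the definition compares $\nu(\mathcal{F})$ with $\nu(\mathcal{E}/\mathcal{F})$, not with $\nu(\mathcal{E})$. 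For the same reason your target inequality $\nu_{\alpha,\beta,t}(\mathcal{F})<\nu_{\alpha,\beta,t}(\mathcal{E})$ for \emph{every} proper nonzero subobject is both false for such $\mathcal{F}$ (Lemma \ref{ch2} only gives $\leq$) and stronger than what is required: for them $\nu_{\alpha,\beta,t}(\mathcal{E}/\mathcal{F})=+\infty$, so the stability inequality holds automatically. This is precisely why the paper's proof restricts the whole estimate to subobjects with $FH^{n-2}\ch_1^{\beta}(\mathcal{F})<FH^{n-2}\ch_1^{\beta}(\mathcal{E})$. Both defects are repairable, but as it stands the proposal does not constitute a complete proof.
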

\begin{proof}
Since
$\nu_{\alpha,\beta,t}=\nu_{\alpha,\beta,0}+t\nu_{H,F}^{\alpha,\beta}$,
one sees $\mathcal{E}$ is $\nu_{\alpha,\beta,t}$-stable for any
$t\geq0$ if $\mathcal{E}$ is $\nu_{\alpha,\beta,0}$-stable. Now we
assume that $\mathcal{E}$ is not $\nu_{\alpha,\beta,0}$-stable. Let
$\nu_1$ be the maximal $\nu_{\alpha,\beta,0}$-slope of
$\mathcal{E}$. Then $\nu_1\geq\nu_{\alpha,\beta,0}(\mathcal{E})$.
Let
\begin{eqnarray*}
\nu_2=\max\Big\{\nu_{H,F}^{\alpha,\beta}(\mathcal{F})&:&
\mathcal{F}~\mbox{is a subobject of}~\mathcal{E} ~\mbox{with}\\
&&FH^{n-2}\ch^{\beta}_1(\mathcal{F})<FH^{n-2}\ch^{\beta}_1(\mathcal{E})\Big\}.
\end{eqnarray*}
The $\nu_{H,F}^{\alpha, \beta}$-stability of $\mathcal{E}$ implies
that $\nu_2<\nu_{H,F}^{\alpha,\beta}(\mathcal{E})$. Therefore, when
$$t>\frac{\nu_1-\nu_{\alpha,\beta,0}(\mathcal{E})}{\nu_{H,F}^{\alpha,
\beta}(\mathcal{E})-\nu_2}$$ one sees that
\begin{eqnarray*}
\nu_{\alpha,\beta,t}(\mathcal{E})&=&\nu_{\alpha,\beta,0}(\mathcal{E})+t\nu_{H,F}^{\alpha,\beta}(\mathcal{E})\\
&>&\nu_1+t\nu_2 \\
&\geq&\nu_{\alpha,\beta,0}(\mathcal{F})+t\nu_{H,F}^{\alpha,\beta}(\mathcal{F})\\
&=&\nu_{\alpha,\beta,t}(\mathcal{F})
\end{eqnarray*}
for any subobject $\mathcal{F}\subset\mathcal{E}$ with
$$FH^{n-2}\ch^{\beta}_1(\mathcal{F})<FH^{n-2}\ch^{\beta}_1(\mathcal{E}).$$
This completes the proof.
\end{proof}

Let $\mathbb{D}(-):=\mathbf{R}\mathcal{H}om(-,
\mathcal{O}_{\mathcal{X}})[1]$ denote the duality functor. We will
show that there is a double-dual operation on $\Coh_C^{ \beta
H}(\mathcal{X})$ as well as for coherent sheaves. This is a relative
analogy of \cite[Lemma 2.19]{BLMS}.

\begin{lemma}\label{dual}
Let $\mathcal{E}$ be an object in $\Coh_C^{ \beta H}(\mathcal{X})$
with $\nu^+_{\alpha, \beta,t}(\mathcal{E})<+\infty$ and
$\mathcal{E}^*$ the cohomology object $\mathcal{H}_{\Coh_C^{-\beta
H}(\mathcal{X})}^0(\mathbb{D}(\mathcal{E}))$.
\begin{enumerate}
\item There exists an exact triangle $$\mathcal{E}^*\rightarrow \mathbb{D}(\mathcal{E})\rightarrow
\mathcal{Q}$$ with $\mathcal{H}^j(\mathcal{Q})=0$ for $j\leq0$ and
$\mathcal{H}^j(\mathcal{Q})$ a torsion sheaf supported in
codimension at least $j+2$ for $j\geq1$.
\item There exists an exact sequence in $\Coh_C^{ \beta
H}(\mathcal{X})$
$$0\rightarrow\mathcal{E}\rightarrow\mathcal{E}^{**}\rightarrow\mathcal{E}^{**}/\mathcal{E}\rightarrow0,$$
with $\mathcal{E}^{**}/\mathcal{E}\in\Coh_{\leq n-3}(\mathcal{X})$,
and $\mathcal{E}^{**}$ is quasi-isomorphic to a two term complex
$B^{-1}\rightarrow B^0$ with $B^{-1}$ locally-free and $B^0$
reflexive.
\end{enumerate}
\end{lemma}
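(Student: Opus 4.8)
The plan is to follow the proof of \cite[Lemma 2.19]{BLMS}, the one genuinely new ingredient being the interaction with $C$-torsion, which is controlled by the hypothesis $\nu^{+}_{\alpha,\beta,t}(\mathcal{E})<+\infty$. I will freely use that $\mathbb{D}^{2}=\mathrm{id}$, that $\mathcal{E}xt^{i}(G,\mathcal{O}_{\mathcal{X}})$ is supported in codimension $\geq i$ for any coherent $G$, in codimension $\geq i+1$ when $G$ is torsion free and in codimension $\geq i+2$ when $G$ is reflexive, and that $\mathcal{H}om(G,\mathcal{O}_{\mathcal{X}})$ is always reflexive.

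For (1): first I would extract from $\nu^{+}_{\alpha,\beta,t}(\mathcal{E})<+\infty$ the structural facts about $\mathcal{E}$ that I need. Any subobject of $\mathcal{E}$ with $FH^{n-2}\ch^{\beta}_{1}=0$ has $\nu_{\alpha,\beta,t}$-slope $+\infty$; since $FH^{n-2}\ch_{1}$ vanishes on every $C$-torsion sheaf (a general fibre is disjoint from any fibre over a proper closed subset of $C$, and $F^{2}H^{n-2}=0$) and on every sheaf supported in codimension $\geq 2$, the hypothesis forbids such subobjects. Combining this with the sequences $0\to\mathcal{H}^{-1}(\mathcal{E})[1]\to\mathcal{E}\to\mathcal{H}^{0}(\mathcal{E})\to0$ and $0\to G\to G^{**}\to G^{**}/G\to0$, and with the fact that $\mu_{C}$ only depends on $\ch_{0},\ch_{1}$ (so $G^{**}\in\mathcal{F}_{\beta H}$ when $G\in\mathcal{F}_{\beta H}$), one finds: $\mathcal{E}$ is $C$-torsion free; $\mathcal{H}^{-1}(\mathcal{E})$ is reflexive with $\mu^{+}_{C}(\mathcal{H}^{-1}(\mathcal{E}))<\beta$; and $\mathcal{E}xt^{j}(\mathcal{H}^{0}(\mathcal{E}),\mathcal{O}_{\mathcal{X}})$ is supported in codimension $\geq j+2$ for $j\geq 1$. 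Then I would compute the standard cohomology sheaves of $\mathbb{D}(\mathcal{E})$ by applying $\mathbf{R}\mathcal{H}om(-,\mathcal{O}_{\mathcal{X}})$ to the above triangle and running the local-to-global spectral sequence, obtaining
$$\mathcal{H}^{-1}(\mathbb{D}(\mathcal{E}))=\mathcal{H}om(\mathcal{H}^{0}(\mathcal{E}),\mathcal{O}_{\mathcal{X}}),$$
which is reflexive and lies in $\mathcal{F}_{-\beta H}$ (dualising reverses $\mu_{C}$ and $\mathcal{H}^{0}(\mathcal{E})\in\mathcal{T}_{\beta H}$); that $\mathcal{H}^{0}(\mathbb{D}(\mathcal{E}))$ is an extension of a subsheaf of $\mathcal{H}om(\mathcal{H}^{-1}(\mathcal{E}),\mathcal{O}_{\mathcal{X}})$ by $\mathcal{E}xt^{1}(\mathcal{H}^{0}(\mathcal{E}),\mathcal{O}_{\mathcal{X}})$, both of which lie in $\mathcal{T}_{-\beta H}$; and $\mathcal{H}^{j}(\mathbb{D}(\mathcal{E}))$ for $j\geq 1$ is a torsion sheaf supported in codimension $\geq j+2$. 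It follows that $\mathbb{D}(\mathcal{E})$ has no cohomology in negative degrees for the t-structure with heart $\Coh^{-\beta H}_{C}(\mathcal{X})$, so the truncation triangle of $\mathbb{D}(\mathcal{E})$ for this heart is the required $\mathcal{E}^{*}\to\mathbb{D}(\mathcal{E})\to\mathcal{Q}$ with $\mathcal{E}^{*}=\mathcal{H}^{0}_{\Coh^{-\beta H}_{C}(\mathcal{X})}(\mathbb{D}(\mathcal{E}))$ and $\mathcal{Q}=\tau^{\geq 1}(\mathbb{D}(\mathcal{E}))$; since $\Coh^{-\beta H}_{C}(\mathcal{X})$ is concentrated in standard degrees $-1,0$, the support data above forces $\mathcal{H}^{j}(\mathcal{Q})=0$ for $j\leq 0$ and $\mathcal{H}^{j}(\mathcal{Q})$ torsion in codimension $\geq j+2$ for $j\geq 1$.

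For (2): apply $\mathbb{D}$ to the triangle in (1); since $\mathbb{D}^{2}=\mathrm{id}$ this gives $\mathbb{D}(\mathcal{Q})\to\mathcal{E}\to\mathbb{D}(\mathcal{E}^{*})$. The object $\mathcal{E}^{*}$ again has $\mathcal{H}^{-1}(\mathcal{E}^{*})=\mathcal{H}om(\mathcal{H}^{0}(\mathcal{E}),\mathcal{O}_{\mathcal{X}})$ reflexive and is $C$-torsion free, so $\nu^{+}_{\alpha,-\beta,t}(\mathcal{E}^{*})<+\infty$ and part (1) applies to $\mathcal{E}^{*}\in\Coh^{-\beta H}_{C}(\mathcal{X})$, yielding $\mathcal{E}^{**}\to\mathbb{D}(\mathcal{E}^{*})\to\mathcal{Q}'$ with $\mathcal{H}^{j}(\mathcal{Q}')=0$ for $j\leq 0$. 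Because $\mathbb{D}(\mathcal{E}^{*})$ has no cohomology in negative degrees for the heart $\Coh^{\beta H}_{C}(\mathcal{X})$ and $\mathcal{E}$ lies in that heart, the map $\mathcal{E}\to\mathbb{D}(\mathcal{E}^{*})$ factors uniquely through $\mathcal{E}^{**}=\mathcal{H}^{0}_{\Coh^{\beta H}_{C}(\mathcal{X})}(\mathbb{D}(\mathcal{E}^{*}))$; call the factorisation $\phi\colon\mathcal{E}\to\mathcal{E}^{**}$. Applying the octahedral axiom to $\mathcal{E}\xrightarrow{\phi}\mathcal{E}^{**}\to\mathbb{D}(\mathcal{E}^{*})$ shows $\mathrm{cone}(\phi)$ fits into a triangle $\mathcal{Q}'[-1]\to\mathrm{cone}(\phi)\to\mathbb{D}(\mathcal{Q})[1]$. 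The support bounds from (1) give $\mathbb{D}(\mathcal{Q})$ concentrated in standard degrees $1,\dots,n-2$ with $\mathcal{H}^{1}(\mathbb{D}(\mathcal{Q}))$ of codimension $\geq 3$, and $\mathcal{Q}'[-1]$ is concentrated in standard degrees $\geq 2$; hence $\mathrm{cone}(\phi)$ has no cohomology in negative degrees and $\mathcal{H}^{0}(\mathrm{cone}(\phi))=\mathcal{H}^{1}(\mathbb{D}(\mathcal{Q}))\in\Coh_{\leq n-3}(\mathcal{X})$. On the other hand $\mathrm{cone}(\phi)$ is the cone of a morphism in the heart $\Coh^{\beta H}_{C}(\mathcal{X})$, hence is concentrated in standard degrees $-2,\dots,0$; combined with the vanishing just obtained, it is the single sheaf $\mathcal{H}^{1}(\mathbb{D}(\mathcal{Q}))$ in degree $0$. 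Therefore $\phi$ is a monomorphism in $\Coh^{\beta H}_{C}(\mathcal{X})$ with cokernel $\mathcal{E}^{**}/\mathcal{E}\cong\mathcal{H}^{1}(\mathbb{D}(\mathcal{Q}))\in\Coh_{\leq n-3}(\mathcal{X})$, which is the desired exact sequence. To exhibit $\mathcal{E}^{**}$ as a two-term complex $B^{-1}\to B^{0}$ with $B^{-1}$ locally free and $B^{0}$ reflexive, I would write $\mathcal{E}^{*}$ as a complex of sheaves $[\mathcal{H}om(\mathcal{H}^{0}(\mathcal{E}),\mathcal{O}_{\mathcal{X}})\to\mathcal{H}^{0}(\mathcal{E}^{*})]$ with injective differential, choose a surjection from a locally free sheaf onto $\mathcal{H}^{0}(\mathcal{E}^{*})$ and a locally free embedding of $\mathcal{H}^{0}(\mathcal{E})$, dualise, and truncate with respect to $\Coh^{\beta H}_{C}(\mathcal{X})$ (which only discards the codimension-$\geq 2$ parts sitting in positive degrees); this step is purely local and reproduces the argument of \cite[Lemma 2.19]{BLMS} verbatim since $\mathcal{X}$ is smooth projective.

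The main obstacle, I expect, is the first step: correctly establishing how the hypothesis $\nu^{+}_{\alpha,\beta,t}(\mathcal{E})<+\infty$ constrains the $C$-torsion and the codimension of the torsion of the cohomology sheaves of $\mathcal{E}$, and then of $\mathcal{E}^{*}$. Unlike in \cite{BLMS}, the torsion pair $(\mathcal{T}_{\beta H},\mathcal{F}_{\beta H})$ here is built from $\mu_{C}$-stability and places $C$-torsion sheaves on the $\mathcal{F}$ side, so some care is needed to see that these degenerate sheaves are exactly the ones excluded by the finiteness of $\nu^{+}$. Everything after that is the formal diagram chase of \cite{BLMS} together with the elementary homological algebra of $\mathbf{R}\mathcal{H}om(-,\mathcal{O}_{\mathcal{X}})$ and the $t$-structure theory of \cite{BLMNPS}.
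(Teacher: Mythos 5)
Your overall architecture is the right one --- dualize, truncate with respect to the tilted heart, dualize again, and use $\nu^{+}_{\alpha,\beta,t}(\mathcal{E})<+\infty$ to exclude degenerate pieces --- and you correctly flag the $C$-torsion behaviour of $(\mathcal{T}_{\beta H},\mathcal{F}_{\beta H})$ as the crux. But the two statements that carry all of part (1) are asserted rather than proved, and the intermediate facts you offer in their support are false in general. First, you claim $\mathcal{H}^0(\mathbb{D}(\mathcal{E}))\in\mathcal{T}_{-\beta H}$ because $\mathcal{E}xt^1(\mathcal{H}^0(\mathcal{E}),\mathcal{O}_{\mathcal{X}})$ and the relevant subsheaf of $\mathcal{H}om(\mathcal{H}^{-1}(\mathcal{E}),\mathcal{O}_{\mathcal{X}})$ ``lie in $\mathcal{T}_{-\beta H}$''. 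They need not: $\mathcal{H}^0(\mathcal{E})$ may contain $C$-torsion supported on fibres (such a subsheaf does not lift to a subobject of $\mathcal{E}$ of infinite slope once $\mathcal{H}^{-1}(\mathcal{E})\neq0$ has $FH^{n-2}\ch_1^{\beta}<0$, so your hypothesis does not exclude it), and dualizing a fibre-supported sheaf of $\mu_C$-slope $>\beta$ produces one of $\mu_C$-slope $<-\beta$, i.e.\ an object of $\mathcal{F}_{-\beta H}$. This is exactly why the paper only concludes $\mathcal{H}^0(\mathbb{D}(\mathcal{E}))_{K(C)}\in\mathcal{T}_{-\beta H_{K(C)}}$, splits off the torsion part $T$ with respect to $(\mathcal{T}_{-\beta H},\mathcal{F}_{-\beta H})$, and then needs a separate argument --- producing a nonzero composition $\mathcal{H}^0(\mathbb{D}(\mathcal{Q}))\rightarrow\mathbb{D}(\mathcal{Q})\rightarrow\mathcal{E}$ from a $C$-torsion sheaf, contradicting $\nu^{+}_{\alpha,\beta,t}(\mathcal{E})<+\infty$ --- to show that $\mathcal{H}^0(\mathcal{Q})=\mathcal{H}^0(\mathbb{D}(\mathcal{E}))/T$ vanishes.

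Second, your claim that $\mathcal{E}xt^{j}(\mathcal{H}^0(\mathcal{E}),\mathcal{O}_{\mathcal{X}})$ is supported in codimension $\geq j+2$ for $j\geq1$ does not follow from the hypothesis: by the same pull-back observation, a pure codimension-$2$ subsheaf of $\mathcal{H}^0(\mathcal{E})$ is not forbidden when $\mathcal{H}^{-1}(\mathcal{E})\neq0$, and then $\mathcal{E}xt^{2}(\mathcal{H}^0(\mathcal{E}),\mathcal{O}_{\mathcal{X}})$ has support of codimension exactly $2$. (Your codimension bounds for $\mathcal{E}xt^{i}$ apply to torsion-free or reflexive sheaves; $\mathcal{H}^0(\mathcal{E})\in\mathcal{T}_{\beta H}$ is neither.) Only the combination $\mathcal{H}^j(\mathbb{D}(\mathcal{E}))$ is supported in codimension $\geq j+2$ --- the excess part of $\mathcal{E}xt^{j+1}(\mathcal{H}^0(\mathcal{E}),\mathcal{O}_{\mathcal{X}})$ is cancelled in the long exact sequence --- and the paper proves this by the inductive contradiction on the largest $j_0$ with $\mathcal{H}^0(\mathbb{D}(\mathcal{H}^{j_0}(\mathcal{Q})[-j_0]))\neq0$: a failure would again yield a nonzero map from a codimension-$\geq2$ torsion sheaf into $\mathcal{E}$, hence a subobject of infinite slope. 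These two double-dualization arguments are the substance of the proof; without them part (1), and hence the cone analysis in part (2) that relies on its support bounds, is not established. The rest of your part (2) (the induced monomorphism $\mathcal{E}\hookrightarrow\mathcal{E}^{**}$, the triangle $\mathcal{Q}'[-1]\rightarrow\mathrm{cone}(\phi)\rightarrow\mathbb{D}(\mathcal{Q})[1]$, and the locally-free/reflexive two-term presentation) matches the paper and is fine once (1) is in place.
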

\begin{proof}
The proof is similar to that of \cite[Lemma 2.19]{BLMS}. We sketch
it here for reader's convenience. Let
\begin{eqnarray*}
\D_{\pm\beta}^{\leqslant0}&=&\{\mathcal{E}\in\D^b(\mathcal{X}): \mathcal{H}^0(\mathcal{E})\in\mathcal{T}_{\pm\beta H}, \mathcal{H}^i(\mathcal{E})=0~\mbox{for}~i>0\}\\
\D_{\pm\beta}^{\geqslant0}&=&\{\mathcal{E}\in\D^b(\mathcal{X}):
\mathcal{H}^{-1}(\mathcal{E})\in\mathcal{F}_{\pm\beta H},
\mathcal{H}^i(\mathcal{E})=0~\mbox{for}~i<-1\}.
\end{eqnarray*}
By the general theory of torsion pairs and tilting \cite{HRS},
$(\D_{\pm\beta}^{\leqslant0}, \D_{\pm\beta}^{\geqslant0})$ is a
bounded t-structure on $\D^b(\mathcal{X})$. We denote by
$(\tau_{\pm\beta}^{\leqslant0}, \tau_{\pm\beta}^{\geqslant0})$ the
associated truncation functors. We also write $(\D^{\leqslant0},
\D^{\geqslant0})$ for the standard t-structure on
$\D^b(\mathcal{X})$ and $(\tau^{\leqslant0}, \tau^{\geqslant0})$ for
the associated truncation functors.

We first notice that for a coherent sheaf $G\in\Coh(\mathcal{X})$,
the complex $\mathbb{D}(G)$ satisfies
\begin{eqnarray*}
\mathcal{H}^j(\mathbb{D}(G))= \left\{
\begin{array}{lcl}
0,  & &\mbox{if}~j<-1,\\
\mathcal{H}om(G, \mathcal{O}_{\mathcal{X}}), &&\mbox{if}~j=-1,\\
\mathcal{E}xt^{j+1}(G, \mathcal{O}_{\mathcal{X}}),
&&\mbox{if}~j\geq0,
\end{array}\right.
\end{eqnarray*}
where $\mathcal{E}xt^{j+1}(G, \mathcal{O}_{\mathcal{X}})$ is a sheaf
supported in codimension $\geq j+1$. In particular, if $G$ is
supported in codimension $k$, then
$\mathcal{H}^{k-1}(\mathbb{D}(G))$ is the smallest degree with a
nonvanishing cohomology sheaf.

(1) Dualizing the triangle
$\mathcal{H}^{-1}(\mathcal{E})[1]\rightarrow\mathcal{E}\rightarrow\mathcal{H}^0(\mathcal{E})$,
one gets an exact triangle
$$\mathbb{D}(\mathcal{H}^0(\mathcal{E}))\rightarrow\mathbb{D}(\mathcal{E})\rightarrow\mathbb{D}(\mathcal{H}^{-1}(\mathcal{E})[1]).$$
Taking the long exact cohomology sequence, we first see that
$\mathcal{H}^{j}(\mathbb{D}(\mathcal{E}))=0$ for $j<-1$ and
$$\mathcal{H}om(\mathcal{H}^0(\mathcal{E}),
\mathcal{O}_{\mathcal{X}})\cong\mathcal{H}^{-1}(\mathbb{D}(\mathcal{E})).$$
Since $\mu_C^{-}(\mathcal{H}^0(\mathcal{E}))>\beta$, we infer that
$\mathcal{H}^{-1}(\mathbb{D}(\mathcal{E}))$ is either zero or a
torsion free sheaf with
$\mu^+_{C}(\mathcal{H}^{-1}(\mathbb{D}(\mathcal{E})))<-\beta$, i.e.,
$\mathcal{H}^{-1}(\mathbb{D}(\mathcal{E}))\in\mathcal{F}_{-\beta
H}$.

We next obtain a long exact sequence
$$0\rightarrow\mathcal{E}xt^1(\mathcal{H}^0(\mathcal{E}), \mathcal{O}_{\mathcal{X}})\rightarrow\mathcal{H}^0(\mathbb{D}(\mathcal{E}))\rightarrow
\mathcal{H}om(\mathcal{H}^{-1}(\mathcal{E})
,\mathcal{O}_{\mathcal{X}})\xrightarrow{\delta}\mathcal{E}xt^2(\mathcal{H}^0(\mathcal{E}),
\mathcal{O}_{\mathcal{X}}).$$ As
$\nu^+_{\alpha,\beta,t}(\mathcal{E})<+\infty$, one sees that any
subobject of $\mathcal{E}$ has non-zero $H^{n-2}F\ch_1^{\beta}$. So
are the subobjects of $\mathcal{H}^{-1}(\mathcal{E})$. From the
definition of $\mathcal{F}_{\beta H}$ and $\mu_C$, it follows that
$\mathcal{H}^{-1}(\mathcal{E})$ is a torsion free sheaf with
$\mu^+_{C}(\mathcal{H}^{-1}(\mathcal{E}))<\beta$. This implies
$\mu^-_{H,F}(\mathcal{H}om(\mathcal{H}^{-1}(\mathcal{E})
,\mathcal{O}_{\mathcal{X}}))>-\beta$. Since the support of
$\mathcal{E}xt^2(\mathcal{H}^0(\mathcal{E}),
\mathcal{O}_{\mathcal{X}})$ is of codimension at least two, we have
$\mu^-_{H,F}(\ker\delta)>-\beta$. As
$\mathcal{E}xt^1(\mathcal{H}^0(\mathcal{E}),
\mathcal{O}_{\mathcal{X}})$ is a torsion sheaf, one deduces that
$\mathcal{H}^0(\mathbb{D}(\mathcal{E}))_{K(C)}\in\mathcal{T}_{-\beta
H_{K(C)}}$. Let
$T:=\tau_{-\beta}^{\leqslant0}(\mathcal{H}^0(\mathbb{D}(\mathcal{E})))$
be the torsion part of $\mathcal{H}^0(\mathbb{D}(\mathcal{E}))$ with
respect to the torsion pair $(\mathcal{T}_{-\beta H},
\mathcal{F}_{-\beta H})$. Since the Harder-Narasimhan filtration of
$\mathcal{H}^0(\mathbb{D}(\mathcal{E}))$ with respect to the slope
$\mu_C$ induces the Harder-Narasimhan filtration of
$\mathcal{H}^0(\mathbb{D}(\mathcal{E}))_{K(C)}$ with respect to
$\mu_{H_{K(C)}}$, one infers that
$T_{K(C)}=\mathcal{H}^0(\mathbb{D}(\mathcal{E}))_{K(C)}$, and thus
$\mathcal{H}^0(\mathbb{D}(\mathcal{E}))/T$ is a $C$-torsion sheaf in
$\mathcal{F}_{-\beta H}$.

We now consider
$\tau_{-\beta}^{\leqslant0}(\mathbb{D}(\mathcal{E}))$ and
$\mathcal{Q}:=\tau_{-\beta}^{\geqslant1}(\mathbb{D}(\mathcal{E}))$.
By the definition of $\tau_{-\beta}^{\leqslant0}$ and
$\tau_{-\beta}^{\geqslant1}$, one sees
$$T=\mathcal{H}^0(\tau_{-\beta}^{\leqslant0}(\mathbb{D}(\mathcal{E})))\in\mathcal{T}_{-\beta
H}~\mbox{and}~
\mathcal{H}^{0}(\mathcal{Q})=\mathcal{H}^0(\mathbb{D}(\mathcal{E}))/T\in\mathcal{F}_{-\beta
H}.$$ The previous arguments show
$$\mathcal{H}^{-1}(\tau_{-\beta}^{\leqslant0}(\mathbb{D}(\mathcal{E})))=\mathcal{H}^{-1}(\mathbb{D}(\mathcal{E}))\in\mathcal{F}_{-\beta
H},$$ and thus
$$\mathcal{E}^*\cong\tau_{-\beta}^{\leqslant0}(\mathbb{D}(\mathcal{E}))\cong\mathcal{H}_{\Coh_C^{-\beta
H}(\mathcal{X})}^0(\mathbb{D}(\mathcal{E}))\in\Coh_C^{-\beta
H}(\mathcal{X}).$$ It remains to show that
$\mathcal{H}^{j}(\mathbb{D}(\mathcal{E}))=\mathcal{H}^{j}(\mathcal{Q})$
is a torsion sheaf supported in codimension at least $j+2$ for $j>0$
and $\mathcal{H}^{0}(\mathcal{Q})=0$.

The continuation of the long exact cohomology sequence above shows
that $\mathcal{H}^{j}(\mathcal{Q})$ is supported in codimension
$\geq j+1$. Thus we have
$\mathbb{D}(\mathcal{H}^{j}(\mathcal{Q})[-j])\in\D^{\geqslant0}$,
and $\mathcal{H}^{j}(\mathcal{Q})$ is supported in codimension at
least $\geq j+2$ if and only if
$\mathbb{D}(\mathcal{H}^{j}(\mathcal{Q})[-j])\in\D^{\geqslant1}$.
Assume for contradiction that there is a largest possible $j_0>0$
such that
$$\mathcal{H}^0(\mathbb{D}(\mathcal{H}^{j_0}(\mathcal{Q})[-j_0]))\neq0.$$ By induction on the number of non-zero cohomology
objects, we see that $$\mathbb{D}(\tau^{\geqslant k}\mathcal{Q}),~
\mathbb{D}(\tau^{\leqslant k}\mathcal{Q}),
~\mathbb{D}(\mathcal{Q})\in\D^{\geqslant0}$$ for all $k\in
\mathbb{Z}$, $\mathcal{H}^i(\mathbb{D}(\tau^{\geqslant
l}\mathcal{Q}))$ is supported in codimension at least two for any
$i\geq0$ and $l\geq1$, $\mathbb{D}(\tau^{\geqslant
j_0+1}\mathcal{Q})\in\D^{\geqslant1}$ and
$\mathcal{H}^1(\mathbb{D}(\tau^{\geqslant j_0+1}\mathcal{Q}))$ is
supported in codimension at least $j_0+3$. Dualizing the exact
triangle
$$\mathcal{H}^{j_0}(\mathcal{Q})[-j_0]\rightarrow\tau^{\geqslant j_0}\mathcal{Q}\rightarrow\tau^{\geqslant j_0+1}\mathcal{Q}$$
and taking its long exact cohomology sequence, one gets the exact
sequence
\begin{equation*}
0\rightarrow\mathcal{H}^0(\mathbb{D}(\tau^{\geqslant
j_0}\mathcal{Q}))\rightarrow\mathcal{H}^0(\mathbb{D}(\mathcal{H}^{j_0}(\mathcal{Q})[-j_0]))
\rightarrow\mathcal{H}^1(\mathbb{D}(\tau^{\geqslant
j_0+1}\mathcal{Q})).
\end{equation*}
Since the middle object is supported in codimension exactly $j_0+1$,
and the right object is supported in codimension $j_0+3$, it follows
that $\mathcal{H}^0(\mathbb{D}(\tau^{\geqslant
j_0}\mathcal{Q}))\neq0$. Dualizing the exact triangle
$$\tau^{\leqslant
j_0-1}\mathcal{Q}\rightarrow\mathcal{Q}\rightarrow\tau^{\geqslant
j_0}\mathcal{Q}$$ gives an injection
$$0\neq\mathcal{H}^0(\mathbb{D}(\tau^{\geqslant j_0}\mathcal{Q}))\hookrightarrow\mathcal{H}^0(\mathbb{D}(\mathcal{Q})).$$
Similarly, dualizing the exact triangle
$$\mathcal{H}^0(\mathcal{Q})\rightarrow\mathcal{Q}\rightarrow\tau^{\geqslant
1}\mathcal{Q}$$ gives
\begin{equation}\label{3.3}
0\rightarrow\mathcal{H}^0(\mathbb{D}(\tau^{\geqslant1}\mathcal{Q}))\rightarrow\mathcal{H}^0(\mathbb{D}(\mathcal{Q}))
\rightarrow\mathcal{E}xt^1(\mathcal{H}^0(\mathcal{Q}),
\mathcal{O}_{\mathcal{X}})\rightarrow\mathcal{H}^1(\mathbb{D}(\tau^{\geqslant1}\mathcal{Q})),
\end{equation}
thus $\mathcal{H}^0(\mathbb{D}(\mathcal{Q}))$ is a torsion sheaf.
Now consider the exact triangle
$$\mathbb{D}(\mathcal{Q})\rightarrow\mathbb{D}(\mathbb{D}(\mathcal{E}))=\mathcal{E}\rightarrow\mathbb{D}(\mathcal{E}^*).$$
The same arguments as before show
$\mathcal{H}^{-1}(\mathbb{D}(\mathcal{E}^*))$ is a torsion free
sheaf in $\mathcal{F}_{\beta H}$, and hence
$$\Hom(\mathcal{H}^0(\mathbb{D}(\mathcal{Q})), \mathbb{D}(\mathcal{E}^*)[-1])
=\Hom(\mathcal{H}^0(\mathbb{D}(\mathcal{Q})),
\mathcal{H}^{-1}(\mathbb{D}(\mathcal{E}^*)))=0.$$ Therefore, the
composition
$\mathcal{H}^0(\mathbb{D}(\mathcal{Q}))\rightarrow\mathbb{D}(\mathcal{Q})\rightarrow\mathcal{E}$
is non-zero. So is $$\mathcal{H}^0(\mathbb{D}(\tau^{\geqslant
j_0}\mathcal{Q}))\hookrightarrow\mathcal{H}^0(\mathbb{D}(\mathcal{Q}))\rightarrow\mathcal{E}.$$
This is a contradiction to
$\nu^+_{\alpha,\beta,t}(\mathcal{E})<+\infty$.

Now we show that $\mathcal{H}^0(\mathcal{Q})=0$. Assume for
contradiction that $\mathcal{H}^0(\mathcal{Q})\neq0$. Then
$\mathcal{E}xt^1(\mathcal{H}^0(\mathcal{Q}),
\mathcal{O}_{\mathcal{X}})$ is a $C$-torsion sheaf supported in
codimension one. Since
$\mathcal{H}^0(\mathbb{D}(\tau^{\geqslant1}\mathcal{Q}))=0$, from
(\ref{3.3}), it follows that
$\mathcal{H}^0(\mathbb{D}(\mathcal{Q}))$ is $C$-torsion. This is
still a contradiction to
$\nu^+_{\alpha,\beta,t}(\mathcal{E})<+\infty$.

From the above proof, we see that
$$\mathcal{E}^*=\tau^{\leqslant0}(\mathbb{D}(\mathcal{E}))
~\mbox{and}~
\mathcal{Q}=\tau^{\geqslant1}(\mathbb{D}(\mathcal{E})).$$

(2) By the proof of part (1), we have the following diagram of exact
triangles
\begin{equation*}
\xymatrix{& & \mathcal{E}^{**} \ar[d] \\
\mathbb{D}(\mathcal{Q})  \ar[r] &\mathcal{E}   \ar[r]& \mathbb{D}(\mathcal{E}^*) \ar[d] \\
& &\mathcal{Q}^{\prime}
  }
\end{equation*}
with $\mathbb{D}(\mathcal{Q})\in\D^{\geqslant1}$ whose cohomology
sheaves are supported in codimension at least 3,
$\mathcal{Q}^{\prime}\in\D^{\geqslant0}$, and
$\mathcal{H}^0(\mathcal{Q}^{\prime})$ is a $C$-torsion sheaf in
$\mathcal{F}_{\beta H}$, $\mathcal{H}^j(\mathcal{Q}^{\prime})$ is
supported in codimension $\geq j+1$ for $j>0$, whereas $\mathcal{E},
\mathcal{E}^{**}\in\Coh_C^{\beta H}(\mathcal{X})$. Since
$\mathcal{E}\in\D_{\beta}^{\leqslant0}$ and
$\mathcal{Q}^{\prime}=\tau_{\beta}^{\geqslant1}(\mathbb{D}(\mathcal{E}^*))\in\D_{\beta}^{\geqslant1}$,
we have $\Hom(\mathcal{E}, \mathcal{Q}^{\prime})=0$. So we have an
induced morphism $\mathcal{E}\rightarrow\mathcal{E}^{**}$. The cone
$\mathcal{C}$ of this morphism fits into an exact triangle
\begin{equation}\label{3.4}
\mathcal{Q}^{\prime}[-1]\rightarrow\mathcal{C}\rightarrow\mathbb{D}(\mathcal{Q})[1].
\end{equation}
Taking its long exact cohomology sequence, one deduces
that $\mathcal{C}\in\D^{\geqslant0}$ and
$\mathcal{H}^{0}(\mathcal{C})$ is supported in codimension at least
3. The long exact cohomology sequence of the exact triangle
$\mathcal{E}\rightarrow\mathcal{E}^{**}\rightarrow\mathcal{C}$ shows
that $\mathcal{H}^{j}(\mathcal{C})=0$ for $j>0$. Therefore, the
morphism $\mathcal{E}\rightarrow\mathcal{E}^{**}$ is injective in
$\Coh_C^{\beta H}(\mathcal{X})$, and its cokernel is a torsion sheaf
supported in codimension at least 3.

Considering again the long exact cohomology sequence induced by
(\ref{3.4}), one sees that $\mathcal{H}^0(\mathcal{Q}^{\prime})=0$
and
$\mathcal{H}^j(\mathcal{Q}^{\prime})=\mathcal{H}^{j+1}(\mathbb{D}(\mathcal{Q}))$
is supported in codimension at least $3$. Thus, by the proof of part
(1), we have
$$\mathcal{E}^{**}=\tau^{\leqslant0}\mathbb{D}(\mathcal{E}^*)
~\mbox{and}~\mathcal{Q}^{\prime}=\tau^{\geqslant1}\mathbb{D}(\mathcal{E}^*).
$$
To finish the proof, we consider a locally-free resolution
$G^{\bullet}$ of $\mathcal{E}^*$. Taking the functor $\mathbb{D}$,
one obtain a morphism
$$\mathcal{E}^{**}\rightarrow\left(G_0^{\vee}\xrightarrow{\phi_0} G_1^{\vee}\xrightarrow{\phi_1}\cdots\rightarrow G_m^{\vee}\right)[1].$$
Hence $\mathcal{E}^{**}=\tau^{\leqslant0}\mathbb{D}(\mathcal{E}^*)$
is quasi-isomorphic to the complex
$G_0^{\vee}\xrightarrow{\phi_0}\ker \phi_1$, and $\ker \phi_1$ is
reflexive as it is the kernel of a morphism of locally-free sheaves.
\end{proof}

From this lemma, one obtains a relative version of \cite[Proposition
2.18]{BLMS}.
\begin{proposition}\label{dual2}
Let $\mathcal{C}^0\subset \Coh_C^{ \beta H}(\mathcal{X})$ be the
subcategory of objects $\mathcal{E}\in\Coh_C^{ \beta
H}(\mathcal{X})$ with $z_{\alpha,\beta,t}(\mathcal{E})=0$. We have
the following:
\begin{enumerate}
\item For $\mathcal{E}\in\Coh_C^{ \beta H}(\mathcal{X})$, there exists a maximal
subobject $\widetilde{\mathcal{E}}\in\mathcal{C}^0$ of $\mathcal{E}$
such that $\Hom(\mathcal{C}^0,
\mathcal{E}/\widetilde{\mathcal{E}})=0$.
\item For $\mathcal{E}\in\Coh_C^{ \beta H}(\mathcal{X})$ with
$\nu_{\alpha,\beta,t}(\mathcal{E})<+\infty$, there exists a short
exact sequence
$\mathcal{E}\hookrightarrow\mathcal{E}^{**}\twoheadrightarrow\mathcal{E}^0$
with $\mathcal{E}^0\in\Coh_{\leq n-3}(\mathcal{X})$ and
$\Hom(\Coh_{\leq n-3}(\mathcal{X}), \mathcal{E}^{**}[1])=0$.
\end{enumerate}
\end{proposition}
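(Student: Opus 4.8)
The plan is to derive both parts from the noetherianity of $\Coh_C^{\beta H}(\mathcal{X})$ established in Theorem \ref{tilt} together with Lemma \ref{dual}. For part (1), I would first check that $\mathcal{C}^0$ is a Serre subcategory of $\Coh_C^{\beta H}(\mathcal{X})$: closure under extensions is immediate from additivity of $z_{\alpha,\beta,t}$, and for sub- and quotient objects, in a short exact sequence $0\to\mathcal{A}\to\mathcal{E}\to\mathcal{B}\to0$ with $z_{\alpha,\beta,t}(\mathcal{E})=0$ the positivity property of Theorem \ref{tilt2} forces $\Im z_{\alpha,\beta,t}(\mathcal{A})=\Im z_{\alpha,\beta,t}(\mathcal{B})=0$ (both are $\ge0$ and sum to $0$) and then $\Re z_{\alpha,\beta,t}(\mathcal{A}),\Re z_{\alpha,\beta,t}(\mathcal{B})\le0$ (again by positivity), hence both vanish. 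Since $\mathcal{C}^0$ is in particular closed under finite direct sums and quotients, the subobjects of $\mathcal{E}$ belonging to $\mathcal{C}^0$ are directed under inclusion, and noetherianity produces a unique maximal such subobject $\widetilde{\mathcal{E}}$. A nonzero morphism from an object of $\mathcal{C}^0$ to $\mathcal{E}/\widetilde{\mathcal{E}}$ would have image a nonzero subobject of $\mathcal{E}/\widetilde{\mathcal{E}}$ lying in $\mathcal{C}^0$, whose preimage in $\mathcal{E}$ is an extension of that image by $\widetilde{\mathcal{E}}$, hence in $\mathcal{C}^0$ and strictly larger than $\widetilde{\mathcal{E}}$; this contradiction gives $\Hom(\mathcal{C}^0,\mathcal{E}/\widetilde{\mathcal{E}})=0$.

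For part (2), I would invoke Lemma \ref{dual}(2). The hypothesis $\nu_{\alpha,\beta,t}(\mathcal{E})<+\infty$ lets us assume, after replacing $\mathcal{E}$ by its quotient by the maximal subobject of infinite $\nu_{\alpha,\beta,t}$-slope (which exists by noetherianity), that $\nu^+_{\alpha,\beta,t}(\mathcal{E})<+\infty$, so the lemma applies and yields $0\to\mathcal{E}\to\mathcal{E}^{**}\to\mathcal{E}^0\to0$ with $\mathcal{E}^0\in\Coh_{\le n-3}(\mathcal{X})$ and $\mathcal{E}^{**}$ quasi-isomorphic to a two-term complex $B^{-1}\to B^0$ with $B^{-1}$ locally free and $B^0$ reflexive. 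What remains to prove is $\Hom(T,\mathcal{E}^{**}[1])=0$ for every $T\in\Coh_{\le n-3}(\mathcal{X})$. Applying $\Hom(T,-)$ to the stupid-truncation triangle $B^{-1}[1]\to\mathcal{E}^{**}\to B^0\to B^{-1}[2]$ reduces this to $\Ext^2(T,B^{-1})=0$ and $\Ext^1(T,B^0)=0$.

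Since $T$ is supported in codimension $\ge3$, $\mathcal{E}xt^i(T,F)=0$ for $i\le2$ whenever $F$ is locally free, so $\mathbf{R}\mathcal{H}om(T,B^{-1})$ lies in degrees $\ge3$ and $\Ext^2(T,B^{-1})=0$. For $B^0$, one has $\mathcal{H}om(T,B^0)=0$ as $T$ is torsion and $B^0$ torsion-free, and I would show $\mathcal{E}xt^1(T,B^0)=0$ locally: near a point $x\in\Supp T$, reflexivity of $B^0$ yields a short exact sequence $0\to B^0_x\to P\to N\to0$ of $\mathcal{O}_{X,x}$-modules with $P$ free and $N$ torsion-free (a reflexive module is a second syzygy); since $\dim_{\mathcal{O}_{X,x}}T_x\le\dim\mathcal{O}_{X,x}-3$, the grade of $\mathrm{Ann}\,T_x$ is at least $3$, so $\Ext^1_{\mathcal{O}_{X,x}}(T_x,P)=0$, while $\Hom_{\mathcal{O}_{X,x}}(T_x,N)=0$; the long exact sequence then gives $\Ext^1_{\mathcal{O}_{X,x}}(T_x,B^0_x)=0$. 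Hence $\mathbf{R}\mathcal{H}om(T,B^0)$ lies in degrees $\ge2$ and $\Ext^1(T,B^0)=0$, which completes part (2).

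The bulk of the statement is carried by Lemma \ref{dual}, invoked as a black box. Within the present argument the two points that need care are: the reduction in part (2) to the case $\nu^+_{\alpha,\beta,t}(\mathcal{E})<+\infty$ — here I would verify, using part (1) and noetherianity, that deleting the maximal infinite-slope subobject is compatible with the asserted conclusion for $\mathcal{E}$ itself, tightening the statement if necessary; and the treatment of the reflexive summand $B^0$ in the $\Hom$-vanishing, which I expect to be the main obstacle. The decisive move there is to replace "$B^0$ is reflexive" by its second-syzygy presentation $0\to B^0_x\to P\to N\to0$: a direct bound on $\depth B^0_x$ would only close the argument when $n=3$, while the syzygy presentation makes it work for all $n\ge2$.
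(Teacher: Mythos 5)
Your proof follows the same route as the paper's: part (1) is exactly the paper's argument ($\mathcal{C}^0$ is a Serre subcategory by the positivity property, noetherianity of $\Coh_C^{\beta H}(\mathcal{X})$ gives the maximal subobject, maximality gives the $\Hom$-vanishing), and part (2) is the argument of \cite[Proposition~2.18]{BLMS}, to which the paper simply defers: the two-term complex $B^{-1}\rightarrow B^0$ from Lemma~\ref{dual}(2) reduces the claim to $\Ext^2(T,B^{-1})=0$ and $\Ext^1(T,B^0)=0$ for $T$ supported in codimension $\geq3$, and your second-syzygy presentation $0\to B^0_x\to P\to N\to 0$ of the reflexive term is the right way to handle the latter for general $n$ (a bare depth bound would indeed only close the case $n=3$).

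The one step that does not work as written is your reduction from $\nu_{\alpha,\beta,t}(\mathcal{E})<+\infty$ to $\nu^+_{\alpha,\beta,t}(\mathcal{E})<+\infty$. If $\mathcal{E}_\infty\subset\mathcal{E}$ denotes the maximal subobject of infinite slope and $\mathcal{E}'=\mathcal{E}/\mathcal{E}_\infty$, then Lemma~\ref{dual} gives an injection $\mathcal{E}'\hookrightarrow(\mathcal{E}')^{**}$, but the composite $\mathcal{E}\twoheadrightarrow\mathcal{E}'\hookrightarrow(\mathcal{E}')^{**}$ has kernel $\mathcal{E}_\infty$, so when $\mathcal{E}_\infty\neq0$ you do not obtain the asserted monomorphism $\mathcal{E}\hookrightarrow\mathcal{E}^{**}$ for $\mathcal{E}$ itself. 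Your instinct to ``tighten the statement'' is the correct resolution: the hypothesis in part (2) should read $\nu^+_{\alpha,\beta,t}(\mathcal{E})<+\infty$, matching Lemma~\ref{dual}. In fairness, this mismatch is already present in the paper, whose proof (``the same as that of \cite[Proposition~2.18]{BLMS}'') does not address it either; and in the only application of the proposition (Step~2 of Theorem~\ref{main}, applied to the objects $\mathcal{G}_j$, which are subobjects of an object of $\mathcal{F}_t^{\prime}$ and hence have all subobjects of slope $\leq0$), the stronger hypothesis holds automatically, so nothing downstream is affected.
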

\begin{proof}
It turns out that $\mathcal{C}^0$ is abelian. As we showed in the
proof of Theorem \ref{tilt}, $\Coh_C^{ \beta H}(\mathcal{X})$ is
noetherian. So is $\mathcal{C}^0$. It follows that we can find a
maximal subobject $\widetilde{\mathcal{E}}\in\mathcal{C}^0$ of
$\mathcal{E}$ satisfying property (1).

The proof of property (2) is the same as that of \cite[Proposition
2.18]{BLMS}.
\end{proof}

\section{Construction and conjecture}\label{S5}
In this section, we give the construction of the heart
$\mathcal{A}_{t}^{\alpha,\beta}(\mathcal{X})$ of a bounded
$t$-structure on $\D^b(\mathcal{X})$ as a tilt starting from
$\Coh_C^{\beta H}(\mathcal{X})$, and state our main conjectures. We
always assume that $n=\dim\mathcal{X}=3$ throughout this section.

We consider the torsion pair
$(\mathcal{T}_t^{\prime},\mathcal{F}_t^{\prime})$ in $\Coh_C^{\beta
H}(\mathcal{X})$ as follows:
\begin{eqnarray*}
\mathcal{T}_t^{\prime}&=&\{\mathcal{E}\in\Coh_C^{\beta
H}(\mathcal{X}): \text{any quotient}~
\mathcal{E}\twoheadrightarrow\mathcal{G}~ \text{satisfies}~
\nu_{\alpha,\beta,t}(\mathcal{G})>0 \}\\
\mathcal{F}_t^{\prime}&=&\{\mathcal{E}\in\Coh_C^{\beta
H}(\mathcal{X}):\text{any subobject}~
\mathcal{K}\hookrightarrow\mathcal{E}~ \text{satisfies}~
\nu_{\alpha,\beta,t}(\mathcal{K})\leq0 \}.
\end{eqnarray*}

\begin{definition}
We define the abelian category
$\mathcal{A}_{t}^{\alpha,\beta}(\mathcal{X})\subset
\D^b(\mathcal{X})$ to be the extension-closure
$$\mathcal{A}_{t}^{\alpha,\beta}(\mathcal{X})=\langle\mathcal{T}_t^{\prime},
\mathcal{F}_t^{\prime}[1]\rangle.$$
\end{definition}


We propose the following conjecture which can be considered as a
relative analogy of \cite[Conjecture 1.3.1]{BMT}.

\begin{conjecture}\label{Conj1}
There exists a triple $(\alpha,\beta, t)$ in
$\sqrt{\mathbb{Q}_{>0}}\times\mathbb{Q}\times\mathbb{Q}_{\geq0}$
such that
\begin{equation}\label{BG1}
\ch_3^{\beta }(\mathcal{E})\leq (a_1H^2+b_1HF)\ch^{\beta
}_{1}(\mathcal{E})+(a_2H+b_2F)\ch^{\beta}_2(\mathcal{E})+c\ch^{\beta}_0(\mathcal{E})
\end{equation}
for any $\nu_{\alpha,\beta,t}$-semistable object
$\mathcal{E}\in\Coh_C^{\beta H}(\mathcal{X})$ with
$\nu_{\alpha,\beta,t}(\mathcal{E})=0$, where the constants $a_1$,
$b_1$, $a_2$, $b_2$ and $c$ are independent of $\mathcal{E}$ and
$a_1>0$.
\end{conjecture}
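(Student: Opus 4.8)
The plan is to establish Conjecture~\ref{Conj1} in the case $\mathcal{X}=\mathbb{P}(E)\xrightarrow{\,f\,}C$ with $\rank E=3$ and $n=\dim\mathcal{X}=3$; together with Theorem~\ref{main} this yields Corollary~\ref{Cor}. Fix $H$ to be a tautological relatively ample class, twisted by a pullback from $C$ so as to be nef, and put $F=f^{-1}(\mathrm{pt})$, so $F^{2}=0$, $H^{2}F=1$, and the general fibre is a copy of $\mathbb{P}^{2}$. Since tensoring by $\mathcal{O}_{\mathcal{X}}(mH)$ transforms $\ch^{\beta}$ and $\beta$ compatibly and preserves all the stability notions in play, we may assume $\beta\in\mathbb{Q}\cap(-1,0]$; and since only the existence of one triple is claimed, we keep $(\alpha,t)$ free and in the end take $t\gg 0$. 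Note already that a $\nu_{\alpha,\beta,t}$-semistable $\mathcal{E}$ with $\nu_{\alpha,\beta,t}(\mathcal{E})=0$ has $FH\ch_{1}^{\beta}(\mathcal{E})>0$ (otherwise its slope would be $+\infty$) and is automatically $C$-torsion free (a $C$-torsion subobject would have mixed slope $+\infty$, contradicting semistability).

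First control the low-degree invariants. Theorem~\ref{Bog2} applied to $\mathcal{E}$ gives $\widetilde{\Delta}^{\beta H}_{H,F,t}(\mathcal{E})\geq 0$, which together with the equation $\nu_{\alpha,\beta,t}(\mathcal{E})=0$ confines $\ch^{\beta}_{\leq 2}(\mathcal{E})$ to a bounded region once $\ch_{0}(\mathcal{E})$ and $FH\ch_{1}^{\beta}(\mathcal{E})$ are fixed, and in particular disposes of the term $b_{2}F\ch_{2}^{\beta}$ in \eqref{BG1}. Next apply Lemma~\ref{dual}(2): there is a short exact sequence $0\to\mathcal{E}\to\mathcal{E}^{**}\to T\to 0$ in $\Coh_{C}^{\beta H}(\mathcal{X})$ with $T\in\Coh_{\leq 0}(\mathcal{X})$, so $\ch_{i}^{\beta}(\mathcal{E}^{**})=\ch_{i}^{\beta}(\mathcal{E})$ for $i\leq 2$ while $\ch_{3}^{\beta}(\mathcal{E})\leq\ch_{3}^{\beta}(\mathcal{E}^{**})$; hence it suffices to bound $\ch_{3}^{\beta}(\mathcal{E}^{**})$ from above by the right-hand side of \eqref{BG1}, which only involves $\ch^{\beta}_{\leq 2}$. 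Moreover $\mathcal{E}^{**}$ is quasi-isomorphic to a two-term complex $B^{-1}\to B^{0}$ with $B^{-1}$ locally free and $B^{0}$ reflexive, it is $C$-flat over a dense open $U\subset C$, and for general $s\in C$ the restriction $(\mathcal{E}^{**})_{s}=\mathcal{E}_{s}$ is $\nu_{s}^{\alpha,\beta}$-tilt-semistable on $\mathcal{X}_{s}\cong\mathbb{P}^{2}$ (apply Lemma~\ref{K-stable} to the $\nu_{H,F}^{\alpha,\beta}$-Harder--Narasimhan factors of $\mathcal{E}$, which by the same slope comparison as above are $C$-torsion free).

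The remaining and essential step is to bound $\ch_{3}^{\beta}(\mathcal{E}^{**})$, and this is where the \emph{global} semistability of $\mathcal{E}$ on $\mathcal{X}$ — not just its fibrewise behaviour — must be used. Via the relative Beilinson resolution of $\mathcal{E}^{**}$ attached to the $\mathbb{P}^{2}$-bundle $f$ (equivalently, Orlov's semiorthogonal decomposition $\D^{b}(\mathcal{X})=\langle f^{*}\D^{b}(C),\,f^{*}\D^{b}(C)\otimes\mathcal{O}(H),\,f^{*}\D^{b}(C)\otimes\mathcal{O}(2H)\rangle$), $\ch_{3}^{\beta}(\mathcal{E}^{**})$ is a fixed linear combination of the ranks and the degrees of the three complexes $\mathbf{R}f_{*}\big(\mathcal{E}^{**}\otimes\Omega^{p}_{\mathcal{X}/C}(pH)\big)\in\D^{b}(C)$, $p=0,1,2$. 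Fibrewise tilt-semistability of $\mathcal{E}_{s}$ on $\mathbb{P}^{2}$ (Bott-type acyclicity for the Beilinson collection) concentrates each of these in a single cohomological degree and makes its rank an Euler characteristic on $\mathbb{P}^{2}$, hence a polynomial in $\ch^{\beta}_{\leq 2}(\mathcal{E})$. The degrees are bounded above because a sub-line-bundle of large degree $d$ in $f_{*}\big(\mathcal{E}^{**}\otimes\Omega^{p}_{\mathcal{X}/C}(pH)\big)$ would, by adjunction, produce a non-zero map $f^{*}\mathcal{L}\otimes\big(\Omega^{p}_{\mathcal{X}/C}(pH)\big)^{\vee}\to\mathcal{E}^{**}$ with $\deg\mathcal{L}=d$; a computation using $F^{2}=0$ (so that twisting by $f^{*}\mathcal{L}$ raises the $H^{2}F$-part of $\ch_{2}$ linearly in $d$) shows the image of such a map has $\nu_{\alpha,\beta,t}\to+\infty$ as $d\to\infty$, while — up to the finite-length discrepancy $\mathcal{E}^{**}/\mathcal{E}$, which is invisible to $\nu_{\alpha,\beta,t}$ — it is a subobject of the mixed-tilt-semistable $\mathcal{E}$ of slope $0$; hence $d\leq C_{p}(\alpha,\beta,t,H^{3})$. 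Feeding these rank and degree bounds into relative Grothendieck--Riemann--Roch yields $\ch_{3}^{\beta}(\mathcal{E}^{**})\leq(a_{1}H^{2}+b_{1}HF)\ch^{\beta}_{1}(\mathcal{E})+(a_{2}H+b_{2}F)\ch^{\beta}_{2}(\mathcal{E})+c\,\ch^{\beta}_{0}(\mathcal{E})$, and a last optimization of $(\alpha,t)$ secures $a_{1}>0$.

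The main obstacle is the last step, and the genuinely hard part is not the mechanism above but making the degree estimate uniform. The pushforwards $\mathbf{R}f_{*}\big(\mathcal{E}^{**}\otimes\Omega^{p}_{\mathcal{X}/C}(pH)\big)$ need not be vector bundles — they may acquire torsion and the fibres may jump over the finite set $C\setminus U$, and the higher direct images may fail to vanish there — so passing from ``$\mu^{+}$ bounded'' to ``degree bounded'' requires combining the two-term structure $B^{-1}\to B^{0}$ of Lemma~\ref{dual}, the $C$-flatness of $\mathcal{E}^{**}$ over $U$, and a separate $\nu_{\alpha,\beta,t}$-semistability bound for subobjects of $\mathcal{E}$ supported on fibres (using Lemma~\ref{ch2}). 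The second delicate point is purely numerical: arranging that, after all substitutions, the coefficient $a_{1}$ of $H^{2}\ch_{1}^{\beta}$ comes out \emph{strictly} positive rather than merely non-negative — this is exactly what Theorem~\ref{main} needs for the positivity of the central charge on $\mathcal{A}_{t}^{\alpha,\beta}(\mathcal{X})$, and it is the part most sensitive to the precise choice of $(\alpha,\beta,t)$. I would expect Section~\ref{S6} to be devoted essentially to these two issues.
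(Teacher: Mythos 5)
Your proposal takes an entirely different route from the paper, and as written it is not a proof: the two steps you yourself flag as ``the genuinely hard part'' --- the uniform degree bound on the Beilinson pushforwards $\mathbf{R}f_*\bigl(\mathcal{E}^{**}\otimes\Omega^p_{\mathcal{X}/C}(pH)\bigr)$ and the strict positivity of $a_1$ --- are exactly the content of the statement, and they are left unproven. Moreover the intermediate claim that fibrewise tilt-semistability of $\mathcal{E}_s$ forces each of these pushforwards to be concentrated in a single cohomological degree is not true for arbitrary $(\alpha,\beta)$ and arbitrary tilt-semistable objects on $\mathbb{P}^2$; such Bott-type acyclicity requires the tilt slope to sit in a specific window relative to the exceptional collection, and you never arrange this. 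So the mechanism (relative Beilinson resolution plus degree bounds on $C$) is a plausible strategy, but the proposal reduces the conjecture to two open sub-problems rather than solving it.

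The paper's actual proof (Theorem \ref{main2}) is much shorter and avoids all of this. By Lemma \ref{large}(2) (line bundles are $\mu_C$-stable with $\overline{\Delta}_{H,F}=0$, hence relatively tilt-stable) and Lemma \ref{t-stability}, the objects $\mathcal{O}_{\mathcal{X}}(H)$ and $\mathcal{O}_{\mathcal{X}}(K_{\mathcal{X}}+H)[1]$ lie in $\Coh_C^{\beta H}(\mathcal{X})$ and are $\nu_{\alpha,\beta,t}$-stable for $-2<\beta<1$ and $t\geq t_0$. A direct computation shows that under the hypotheses $\alpha-2<\beta<1-\alpha$ and $t$ large, their mixed tilt slopes are respectively $>0$ and $<0$. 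Semistability of $\mathcal{E}$ with $\nu_{\alpha,\beta,t}(\mathcal{E})=0$ then gives $\Hom(\mathcal{O}_{\mathcal{X}}(H),\mathcal{E})=0$ and $\Hom(\mathcal{E},\mathcal{O}_{\mathcal{X}}(K_{\mathcal{X}}+H)[1])=0$, i.e.\ $\Ext^0(\mathcal{O}_{\mathcal{X}},\mathcal{E}(-H))=\Ext^2(\mathcal{O}_{\mathcal{X}},\mathcal{E}(-H))=0$, whence $\chi(\mathcal{E}(-H))\leq0$; Grothendieck--Riemann--Roch together with the Chern class computation of Lemma \ref{can} turns this into inequality (\ref{6.1}) with $a_1=-\beta(\beta+1)/2$, which is strictly positive precisely for $-1<\beta<0$. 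If you want to salvage your approach, the missing ingredient is this Hom-vanishing against a pair of stable objects of opposite slope: once you have it, the Euler-characteristic computation replaces the entire Beilinson analysis, the double-dual reduction, and the degree estimates.
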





Consider the following central charge
\begin{eqnarray*}
z_{l}&=& (a_1H^2+lHF)\ch^{\beta
}_{1}+(a_2H+b_2F)\ch_2^{\beta}+c\ch^{\beta}_0-\ch_3^{\beta
}\\
&&+i\Big((H+tF)\ch_2^{\beta}(\mathcal{E})-\frac{t+1}{2}\alpha^2FH^{2}\ch_0(\mathcal{E})\Big).
\end{eqnarray*}
We think of it as the composition
$$z_{l}: \K(\D^b(\mathcal{X}))\xrightarrow{\bar{v}} \mathbb{Z}\oplus\mathbb{Z}\oplus\mathbb{Z}
\oplus\frac{1}{2}\mathbb{Z}\oplus\frac{1}{2}\mathbb{Z}\oplus\frac{1}{6}\mathbb{Z}
\xrightarrow{Z_{l}} \mathbb{C},$$ where the first map is given by
$$\bar{v}(\mathcal{E})=\left(\ch_0(\mathcal{E}), FH\ch_1(\mathcal{E}), H^2\ch_1(\mathcal{E}), F\ch_2(\mathcal{E}), H\ch_2(\mathcal{E}),\ch_3(\mathcal{E})\right).$$
Conjecture \ref{Conj1} implies the existence of stability conditions
on $\mathcal{X}$.

\begin{theorem}\label{main}
Assume Conjecture \ref{Conj1} holds for
$(\alpha,\beta,t)\in\sqrt{\mathbb{Q}_{>0}}\times\mathbb{Q}\times\mathbb{Q}_{\geq0}$,
then the pair $(Z_{l},\mathcal{A}_{t}^{\alpha, \beta}(\mathcal{X}))$
is a locally finite stability condition on $\mathcal{X}$ if
$l>\max\{b_1,0\}$.
\end{theorem}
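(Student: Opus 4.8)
The plan is to check the defining properties of a locally finite stability condition for the pair $(Z_l,\mathcal{A})$, where $\mathcal{A}:=\mathcal{A}_t^{\alpha,\beta}(\mathcal{X})$. That $\mathcal{A}$ is the heart of a bounded $t$-structure is automatic, being a tilt of $\Coh_C^{\beta H}(\mathcal{X})$ at $(\mathcal{T}_t',\mathcal{F}_t')$ \cite{HRS}. So the three things to establish are: (i) the positivity of $Z_l$ on $\mathcal{A}$, together with $Z_l(v(\mathcal{E}))\neq0$ for $0\neq\mathcal{E}\in\mathcal{A}$; (ii) the Harder--Narasimhan property; (iii) local finiteness. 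The useful bookkeeping identities are $\Im z_l=-\Re z_{\alpha,\beta,t}$ and $\nu_{\alpha,\beta,t}(\mathcal{E})=\Im z_l(\mathcal{E})/(FH\ch_1^{\beta}(\mathcal{E}))$ when $FH\ch_1^{\beta}(\mathcal{E})\neq0$, so that $\Im z_l$ is essentially the $\nu_{\alpha,\beta,t}$-numerator and $\mathcal{A}$ is the tilt of $\Coh_C^{\beta H}(\mathcal{X})$ adapted to it.

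For (i): any $0\neq\mathcal{E}\in\mathcal{A}$ sits in $0\to\mathcal{E}^-[1]\to\mathcal{E}\to\mathcal{E}^+\to0$ with $\mathcal{E}^+:=\mathcal{H}^0_{\Coh_C^{\beta H}(\mathcal{X})}(\mathcal{E})\in\mathcal{T}_t'$ and $\mathcal{E}^-:=\mathcal{H}^{-1}_{\Coh_C^{\beta H}(\mathcal{X})}(\mathcal{E})\in\mathcal{F}_t'$. Using that every quotient of $\mathcal{E}^+$ has positive $\nu_{\alpha,\beta,t}$, that every subobject of $\mathcal{E}^-$ has non-positive $\nu_{\alpha,\beta,t}$, and Lemma \ref{ch2}(1)--(2) to handle the infinite-slope (i.e.\ $FH\ch_1^{\beta}=0$) pieces, one gets $\Im z_l(\mathcal{E}^+)\geq0\geq\Im z_l(\mathcal{E}^-)$, hence $\Im Z_l\geq0$ on $\mathcal{A}$. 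In the equality case $\Im Z_l(\mathcal{E})=0$, both summands vanish; this forces $\mathcal{E}^-$ to be $\nu_{\alpha,\beta,t}$-semistable of slope $0$ with $FH\ch_1^{\beta}(\mathcal{E}^-)>0$, and forces $FH\ch_1^{\beta}(\mathcal{E}^+)=\ch_0(\mathcal{E}^+)=H\ch_2^{\beta}(\mathcal{E}^+)=0$, so by Lemma \ref{ch2}(3) $\mathcal{H}^0(\mathcal{E}^+)$ is supported on points, $\mathcal{H}^{-1}(\mathcal{E}^+)$ is $C$-torsion, and $\ch_3^{\beta}(\mathcal{E}^+)\geq0$.

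The heart of (i) is then the strict negativity of $\Re Z_l(\mathcal{E})=\Re z_l(\mathcal{E}^+)-\Re z_l(\mathcal{E}^-)$ on such $\mathcal{E}$. Here Conjecture \ref{Conj1} applies directly to $\mathcal{E}^-$ and gives $\Re z_l(\mathcal{E}^-)\geq(l-b_1)FH\ch_1^{\beta}(\mathcal{E}^-)>0$ whenever $\mathcal{E}^-\neq0$, since the $\ch_1^{\beta}$-coefficient of $z_l$ exceeds that of (\ref{BG1}) by $(l-b_1)HF$ and $l>b_1$, $FH\ch_1^{\beta}(\mathcal{E}^-)>0$. For $\mathcal{E}^+$, because $\mathcal{H}^{-1}(\mathcal{E}^+)$ is supported on fibres and $\mathcal{H}^0(\mathcal{E}^+)$ on points, Lemma \ref{Chern} and $F^2=0$ give that $\ch_1^{\beta}(\mathcal{E}^+)$ is a non-positive integer multiple of $F$ and that $F\ch_2^{\beta}(\mathcal{E}^+)=HF\ch_1^{\beta}(\mathcal{E}^+)=0$; with $a_1>0$, $FH^2>0$ and $\ch_3^{\beta}(\mathcal{E}^+)\geq0$ this yields $\Re z_l(\mathcal{E}^+)\leq0$, with equality only when $\mathcal{E}^+=0$. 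Hence $\Re Z_l(\mathcal{E})<0$, which establishes (i). The hypothesis $l>\max\{b_1,0\}$ enters here through $l>b_1$; positivity of $l$ is additionally used to keep the $\ch_1^{\beta}$-coefficient $a_1H^2+lHF$ of $z_l$ positive, which is needed in the noetherianity step.

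Finally, (ii) and (iii). I would first show $\mathcal{A}$ is noetherian: along an ascending chain of subobjects of a fixed $\mathcal{E}\in\mathcal{A}$ the additive invariant $\Im Z_l$ is non-decreasing and bounded by $\Im Z_l(\mathcal{E})$, hence eventually constant, so the later subquotients lie in the category of objects with $\Im Z_l=0$, which is controlled --- via the maximal-subobject and double-dual statements of Lemma \ref{dual} and Proposition \ref{dual2} together with the Bogomolov-type inequality of Theorem \ref{Bog2} --- exactly as in \cite{BLMS} and \cite{BMS}. Since $\Im Z_l$ has discrete image (as $t,\beta,\alpha^2\in\mathbb{Q}$), the Harder--Narasimhan property follows from the standard noetherianity/discreteness criterion (cf.\ \cite[Lemma 2.18]{PT}, as used in the proof of Theorem \ref{tilt}), and local finiteness follows from noetherianity together with the boundedness of semistable classes in a bounded slice provided by Theorem \ref{Bog2} (after arranging, harmlessly, that the constants of Conjecture \ref{Conj1} are rational, so that $Z_l$ itself has discrete image); compare \cite{Bri1, Bri2}. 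I expect the noetherianity of $\mathcal{A}$ to be the main obstacle: pinning down the $\Im Z_l=0$ objects and bounding them through the double-dual operation of Lemma \ref{dual}/Proposition \ref{dual2} and Theorem \ref{Bog2} is the substantive part, whereas the sign computations in (i) are routine once Conjecture \ref{Conj1}, $F^2=0$ and Lemma \ref{ch2} are available.
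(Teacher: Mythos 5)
Your proposal is correct and follows essentially the same route as the paper's proof: the same torsion-pair decomposition of $\mathcal{E}$ and the same sign analysis (Lemma \ref{ch2} for the infinite-slope piece, Conjecture \ref{Conj1} together with $l>b_1$ and $HF\ch_1^{\beta}>0$ for the semistable slope-zero piece) establish positivity, and noetherianity plus discreteness give the Harder--Narasimhan and local finiteness properties exactly as in the paper's Steps 2--3. The one place you stop short is the noetherianity of $\mathcal{A}_t^{\alpha,\beta}(\mathcal{X})$, which is the bulk of the paper's argument (first using Theorem \ref{Bog2} to bound $H^2\ch_1^{\beta}$ along the chain and thereby kill the $C$-torsion divisorial parts of the $\Im Z_l=0$ quotients, then bounding the residual zero-dimensional pieces inside $\mathcal{G}_1^{**}/\mathcal{G}_1$ via Proposition \ref{dual2}); your sketch names exactly these two ingredients and correctly identifies this as the substantive step, so fleshing it out would reproduce the paper's proof.
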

\begin{proof}
\textbf{Step 1.} The pair $(Z_{l},\mathcal{A}_{t}^{\alpha,
\beta}(\mathcal{X}))$ satisfies the positivity property for any
$0\neq\mathcal{E}\in \mathcal{A}_{t}^{\alpha, \beta}(\mathcal{X})$.
\bigskip

The construction of the heart $\mathcal{A}_{t}^{\alpha,
\beta}(\mathcal{X})$ directly ensures that $\Im
Z_{s,t}(\bar{v}(\mathcal{E}))\geq0$ for any
$\mathcal{E}\in\mathcal{A}_{t}^{\alpha, \beta}(\mathcal{X})$.
Moreover, if  $\Im Z_{s,t}(\bar{v}(\mathcal{E}))=0$, then
$\mathcal{E}$ fits into an exact triangle
$$\mathcal{K}[1]\rightarrow\mathcal{E}\rightarrow \mathcal{G}$$
where
\begin{enumerate}
\item $\mathcal{G}$ is an object in $\Coh_C^{\beta
H}(\mathcal{X})$ with
$$HF\ch_1^{\beta}(\mathcal{G})=(H+tF)\ch_2^{\beta}(\mathcal{G})-\frac{t+1}{2}\alpha^2FH^{2}\ch_0(\mathcal{G})=0;$$
\item $\mathcal{K}\in \Coh_C^{\beta
H}(\mathcal{X})$ is $\nu_{\alpha,\beta,t}$-semistable with
$\nu_{\alpha,\beta, t}(\mathcal{K})=0$ and
$HF\ch_1^{\beta}(\mathcal{K})>0$.
\end{enumerate}
By Lemma \ref{ch2}, one sees that
$$HF\ch_1^{\beta}(\mathcal{G})=H\ch_2^{\beta}(\mathcal{G})=F\ch_2^{\beta}(\mathcal{G})=\ch_0(\mathcal{G})=0,$$
$\ch^{\beta}_3(\mathcal{G})\geq0$, $\mathcal{H}^{-1}(\mathcal{G})$
is $C$-torsion and
$\mathcal{H}^0(\mathcal{G})\in\Coh_{\leq0}(\mathcal{X})$. These
imply that $H^2\ch_1(\mathcal{H}^{-1}(\mathcal{G}))>0$ and $\Re
z_l(\mathcal{G})<0$. On the other hand, Conjecture \ref{Conj1}
implies that
\begin{eqnarray*}
\ch_3^{\beta }(\mathcal{K})&\leq& (a_1H^2+b_1HF)\ch^{\beta
}_{1}(\mathcal{K})+(a_2H+b_2F)\ch^{\beta}_2(\mathcal{K})+c\ch^{\beta}_0(\mathcal{K})\\
&<& (a_1H^2+lHF)\ch^{\beta
}_{1}(\mathcal{K})+(a_2H+b_2F)\ch^{\beta}_2(\mathcal{K})+c\ch^{\beta}_0(\mathcal{K}),
\end{eqnarray*}
i.e., $\Re z_l(\mathcal{K})>0$. Therefore one concludes that
$$\Re z_l(\mathcal{E})=\Re z_l(\mathcal{G})-\Re z_l(\mathcal{K})<0.$$

\bigskip
\textbf{Step 2.} The category $\mathcal{A}_{t}^{\alpha,
\beta}(\mathcal{X})$ is noetherian.
\bigskip

We take a chain of surjections in $\mathcal{A}_{t}^{\alpha,
\beta}(\mathcal{X})$:
\begin{equation}\label{5.1}
\mathcal{E}_0\twoheadrightarrow
\mathcal{E}_1\twoheadrightarrow\cdots\twoheadrightarrow
\mathcal{E}_m\twoheadrightarrow \cdots.
\end{equation}
Since $\Im z_l(\mathcal{E}_j)\geq0$ and the image of $z_l$ is
discrete, we can assume that $\Im z_l(\mathcal{E}_j)$ is constant
for all $j\geq0$. Then one obtains short exact sequences in
$\mathcal{A}_{t}^{\alpha, \beta}(\mathcal{X})$:
$$0\rightarrow\mathcal{K}_j\rightarrow\mathcal{E}_0\rightarrow\mathcal{E}_j\rightarrow0$$
with $\Im z_l(\mathcal{K}_j)=0$. By the Noetherianity of
$\Coh_C^{\beta H}(\mathcal{X})$, we may assume that
$$\mathcal{H}^0_{\Coh_C^{\beta H}(\mathcal{X})}(\mathcal{E}_0)=\mathcal{H}^0_{\Coh_C^{\beta
H}(\mathcal{X})}(\mathcal{E}_j)~\mbox{and}~\mathcal{H}^{-1}_{\Coh_C^{\beta
H}(\mathcal{X})}(\mathcal{K}_j)=\mathcal{H}^{-1}_{\Coh_C^{\beta
H}(\mathcal{X})}(\mathcal{K}_{j+1})$$ for any $j\geq1$. Setting
$\mathcal{U}=\mathcal{H}^{-1}_{\Coh_C^{\beta
H}(\mathcal{X})}(\mathcal{E}_0)/\mathcal{H}^{-1}_{\Coh_C^{\beta
H}(\mathcal{X})}(\mathcal{K}_j)$, one gets the short exact sequences
\begin{equation}\label{5.2}
0\rightarrow \mathcal{U}\rightarrow\mathcal{H}^{-1}_{\Coh_C^{\beta
H}(\mathcal{X})}(\mathcal{E}_j)\xrightarrow{g_j}\mathcal{H}^{0}_{\Coh_C^{\beta
H}(\mathcal{X})}(\mathcal{K}_j)\rightarrow0.
\end{equation}
Consider the exact sequence in $\mathcal{A}_{t}^{\alpha,
\beta}(\mathcal{X})$:
\begin{equation}\label{5.3}
0\rightarrow\mathcal{K}_j\rightarrow\mathcal{K}_{j+1}\rightarrow\mathcal{Q}_{j+1}\rightarrow0.
\end{equation}
Since $\Im z_l(\mathcal{K}_j)=0$, by the definition of
$\mathcal{A}_{t}^{\alpha, \beta}(\mathcal{X})$, one sees that
$\mathcal{H}^{0}_{\Coh_C^{\beta H}(\mathcal{X})}(\mathcal{K}_j)$ has
zero $HF\ch_1^{\beta}$, $H\ch_2^{\beta}$, $F\ch_2^{\beta}$ and
$\ch_0$. Lemma \ref{ch2} gives that
$\mathcal{H}^{-1}\left(\mathcal{H}^{0}_{\Coh_C^{\beta
H}(\mathcal{X})}(\mathcal{K}_j)\right)$ is a $C$-torsion
$\mu_C$-semistable sheaf with $\mu_C$-slope $\beta$, and
$\mathcal{H}^{0}\left(\mathcal{H}^{0}_{\Coh_C^{\beta
H}(\mathcal{X})}(\mathcal{K}_j)\right)$ is a sheaf supported in
dimension zero. The same argument holds for
$\mathcal{H}^{0}_{\Coh_C^{\beta
H}(\mathcal{X})}(\mathcal{Q}_{j+1})$. Taking the long exact
cohomology sequence of (\ref{5.3}), one sees that
$\mathcal{H}^{-1}_{\Coh_C^{\beta
H}(\mathcal{X})}(\mathcal{Q}_{j+1})$ is a subobject of
$\mathcal{H}^{0}_{\Coh_C^{\beta H}(\mathcal{X})}(\mathcal{K}_j)$,
and thus $$\mathcal{H}^{-1}_{\Coh_C^{\beta
H}(\mathcal{X})}(\mathcal{Q}_{j+1})=0$$ by the definition of
$\mathcal{A}_{t}^{\alpha, \beta}(\mathcal{X})$. Hence we have a
chain of injections in $\mathcal{T}_t^{\prime}$
$$\mathcal{H}^{0}_{\Coh_C^{\beta H}(\mathcal{X})}(\mathcal{K}_1)\subset\mathcal{H}^{0}_{\Coh_C^{\beta
H}(\mathcal{X})}(\mathcal{K}_2)\subset\cdots.$$ This gives a chain
of injections in $\mathcal{F}_t^{\prime}$:
$$\mathcal{H}^{-1}_{\Coh_C^{\beta H}(\mathcal{X})}(\mathcal{E}_1)\subset\mathcal{H}^{-1}_{\Coh_C^{\beta
H}(\mathcal{X})}(\mathcal{E}_2)\subset\cdots$$ with
$$\mathcal{H}^{-1}_{\Coh_C^{\beta
H}(\mathcal{X})}(\mathcal{E}_{j+1})/\mathcal{H}^{-1}_{\Coh_C^{\beta
H}(\mathcal{X})}(\mathcal{E}_j)\cong\mathcal{H}^{0}_{\Coh_C^{\beta
H}(\mathcal{X})}(\mathcal{Q}_{j+1})\cong\mathcal{Q}_{j+1}$$ for
$j\geq1$.

As we have showed that $\mathcal{Q}_{j+1}\in\Coh_C^{\beta
H}(\mathcal{X})$, $\mathcal{H}^{0}(\mathcal{Q}_{j+1})$ is supported
in dimension zero and $\mathcal{H}^{-1}(\mathcal{Q}_{j+1})$ is a
$C$-torsion $\mu_C$-semistable sheaf with
$\mu_C(\mathcal{H}^{-1}(\mathcal{Q}_{j+1}))=\beta$, one sees that
$H^2\ch_1^{\beta}(\mathcal{Q}_{j+1})<0$ if
$\mathcal{H}^{-1}(\mathcal{Q}_{j+1})\neq0$. Thus we have
$$H^2\ch_1^{\beta}\left(\mathcal{H}^{-1}_{\Coh_C^{\beta
H}(\mathcal{X})}(\mathcal{E}_{j})\right)>H^2\ch_1^{\beta}\left(\mathcal{H}^{-1}_{\Coh_C^{\beta
H}(\mathcal{X})}(\mathcal{E}_{j+1})\right)$$ if
$\mathcal{H}^{-1}(\mathcal{Q}_{j+1})\neq0$. As the proof of
\cite[Lemma 2.15]{PT}, by induction on the number of
Harder-Narasimhan factors of $\mathcal{H}^{-1}_{\Coh_C^{\beta
H}(\mathcal{X})}(\mathcal{E}_{1})$ with respect to
$\nu_{\alpha,\beta,t}$, one may assume that
$\mathcal{H}^{-1}_{\Coh_C^{\beta H}(\mathcal{X})}(\mathcal{E}_{1})$
is $\nu_{\alpha,\beta,t}$-semistable. Hence one infers
$\mathcal{H}^{-1}_{\Coh_C^{\beta H}(\mathcal{X})}(\mathcal{E}_{j})$
is $\nu_{\alpha,\beta,t}$-semistable with positive $HF\ch_1^{\beta}$
for any $j\geq1$ by \cite[Sublemma 2.16]{PT}. Since
$HF\ch_1^{\beta}$, $H\ch_2^{\beta}$, $F\ch_2^{\beta}$ and $\ch_0$ of
$\mathcal{H}^{-1}_{\Coh_C^{\beta H}(\mathcal{X})}(\mathcal{E}_{j})$
are constant as $j$ grows, by Theorem \ref{Bog2} one deduces that
there is a rational number $c_0$ such that
$$H^2\ch_1^{\beta}\left(\mathcal{H}^{-1}_{\Coh_C^{\beta
H}(\mathcal{X})}(\mathcal{E}_{j})\right)\geq c_0$$ for any $j\geq1$.
This implies that
$$H^2\ch_1^{\beta}\left(\mathcal{H}^{-1}_{\Coh_C^{\beta
H}(\mathcal{X})}(\mathcal{E}_{j})\right)=H^2\ch_1^{\beta}\left(\mathcal{H}^{-1}_{\Coh_C^{\beta
H}(\mathcal{X})}(\mathcal{E}_{j+1})\right)$$ when $j\gg0$.
Therefore, we conclude that $\mathcal{H}^{-1}(\mathcal{Q}_{j+1})=0$
when $j\gg0$. We may assume that $\mathcal{Q}_{j+1}$ is a sheaf
supported in dimension zero for any $j\geq1$.

Taking the long exact cohomology sequence of (\ref{5.3}) again, one
obtains exact sequences of sheaves supported in dimension zero
$$0\rightarrow\mathcal{H}^0\left(\mathcal{H}^{0}_{\Coh_C^{\beta H}(\mathcal{X})}(\mathcal{K}_j)\right)\rightarrow\mathcal{H}^0\left(\mathcal{H}^{0}_{\Coh_C^{\beta
H}(\mathcal{X})}(\mathcal{K}_{j+1})\right)\rightarrow\mathcal{Q}_{j+1}\rightarrow0$$
and equalities
$$\mathcal{H}^{-1}\left(\mathcal{H}^{0}_{\Coh_C^{\beta H}(\mathcal{X})}(\mathcal{K}_1)\right)=\mathcal{H}^{-1}\left(\mathcal{H}^{0}_{\Coh_C^{\beta
H}(\mathcal{X})}(\mathcal{K}_{2})\right)=\cdots.$$ Let
$\mathcal{G}_i$ be the kernel of the composition
$$\mathcal{H}^{-1}_{\Coh_C^{\beta
H}(\mathcal{X})}(\mathcal{E}_j)\xrightarrow{g_j}\mathcal{H}^{0}_{\Coh_C^{\beta
H}(\mathcal{X})}(\mathcal{K}_j)\rightarrow\mathcal{H}^0\left(\mathcal{H}^{0}_{\Coh_C^{\beta
H}(\mathcal{X})}(\mathcal{K}_j)\right).$$ Then we have the following
commutative diagram
\begin{equation*}
\xymatrix{
0\ar[r]&\mathcal{G}_j\ar[r]\ar@{==}[d]&\mathcal{H}^{-1}_{\Coh_C^{\beta
H}(\mathcal{X})}(\mathcal{E}_j) \ar@{^(->}[d]\ar[r]&
\mathcal{H}^0\left(\mathcal{H}^{0}_{\Coh_C^{\beta
H}(\mathcal{X})}(\mathcal{K}_j)\right) \ar@{^(->}[d]
\ar[r] & 0 \\
0\ar[r]&\mathcal{G}_{j+1}\ar[r]& \mathcal{H}^{-1}_{\Coh_C^{\beta
H}(\mathcal{X})}(\mathcal{E}_{j+1})\ar[r]\ar@{->>}[d]&
\mathcal{H}^0\left(\mathcal{H}^{0}_{\Coh_C^{\beta
H}(\mathcal{X})}(\mathcal{K}_j)\right) \ar[r]\ar@{->>}[d]& 0 \\
&& \mathcal{Q}_{j+1} \ar@{=}[r]& \mathcal{Q}_{j+1} &}
\end{equation*}
The snake lemma gives $\mathcal{G}_1=\mathcal{G}_{j}$ for any
$j\geq1$. By Proposition \ref{dual2}, one concludes that
$\mathcal{H}^0\left(\mathcal{H}^{0}_{\Coh_C^{\beta
H}(\mathcal{X})}(\mathcal{K}_j)\right)$ is a subsheaf of the zero
dimensional sheaf $\mathcal{G}_1^{**}/\mathcal{G}_1$. In particular
the degree of $\mathcal{H}^0\left(\mathcal{H}^{0}_{\Coh_C^{\beta
H}(\mathcal{X})}(\mathcal{K}_j)\right)$ is bounded. This shows that
$\mathcal{Q}_j=0$ for large $j$, and hence
$\mathcal{K}_j=\mathcal{K}_{j+1}$ for large $j$. This implies that
the chain (\ref{5.1}) terminates, and thus $\mathcal{A}_{t}^{\alpha,
\beta}(\mathcal{X})$ is Noetherian.

\bigskip
\textbf{Step 3.} The pair $(Z_{l},\mathcal{A}_{t}^{\alpha,
\beta}(\mathcal{X}))$ satisfies the
Harder-Narasimhan property and the local finiteness property.
\bigskip

Since $\mathcal{A}_{t}^{\alpha,
\beta}(\mathcal{X})$ is Noetherian,
from the discreteness of $Z_l$,
we conclude that $(Z_{l},\mathcal{A}_{t}^{\alpha,
\beta}(\mathcal{X}))$ satisfies the Harder-Narasimhan property.
The local finiteness follows immediately from \cite[Lemma
4.4]{Bri2}.
\end{proof}

\begin{remark}
I do not know whether $(Z_{l},\mathcal{A}_{t}^{\alpha,
\beta}(\mathcal{X}))$ satisfies the support property.
\end{remark}

\section{Stability conditions on projective bundles}\label{S6}
Throughout this section we let $E$ be a locally free sheaf on $C$
with $\rank E=3$ and $\mathcal{X}:=\mathbb{P}(E)$ be the projective
bundle associated to $E$ with the projection
$f:\mathcal{X}\rightarrow C$ and the associated relative ample
invertible sheaf $\mathcal{O}_{\mathcal{X}}(1)$. Since
$\mathbb{P}(E)\cong \mathbb{P}(E\otimes L)$ for any line bundle $L$
on $C$, we can assume that $H:=c_1(\mathcal{O}_{\mathcal{X}}(1))$ is
ample. One sees that $H^3=\deg E$ and $H^2F=1$. We freely use the
notations in previous sections.

\begin{lemma}\label{can}
Denote by $g$ the genus of $C$. Then we have
\begin{eqnarray*}
c_1(T_{\mathcal{X}})&=&-f^*K_C-f^*c_1(E)+3H\\
c_2(T_{\mathcal{X}})&=& 3H^2-(6g-6+2\deg E)HF.
\end{eqnarray*}
In particular, for any divisor $D$ on $\mathcal{X}$ we have
\begin{eqnarray*}
DFc_1(T_{\mathcal{X}})&=&3DHF\\
DHc_1(T_{\mathcal{X}})&=&3DH^2-(2g-2+H^3)DHF\\
D\Big(c^2_1(T_{\mathcal{X}})+c_2(T_{\mathcal{X}})\Big)&=&
12DH^2-(18g-18+8H^3)DHF\\
\chi(\mathcal{O}_{\mathcal{X}})&=&\frac{1}{24}c_1(T_{\mathcal{X}})c_2(T_{\mathcal{X}})=1-g.
\end{eqnarray*}
\end{lemma}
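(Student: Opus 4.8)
The plan is to reduce everything to two short exact sequences of bundles on $\mathcal{X}$: the relative Euler sequence
$$0\to\mathcal{O}_{\mathcal{X}}\to f^*E^\vee\otimes\mathcal{O}_{\mathcal{X}}(1)\to T_{\mathcal{X}/C}\to 0$$
and the tangent sequence of the smooth morphism $f$,
$$0\to T_{\mathcal{X}/C}\to T_{\mathcal{X}}\to f^*T_C\to 0.$$
First I would record the numerical facts forced by the hypotheses: since $E$ is a bundle on a curve, $c_i(E)=0$ for $i\geq 2$; since $f^*$ of any divisor class on $C$ is a rational multiple of the fiber class $F$, any product of two such pullbacks vanishes, and numerically $f^*K_C\equiv(2g-2)F$, $f^*c_1(E)\equiv(\deg E)F$; finally $H^2F=1$, $H^3=\deg E$ and $F^2=HF^2=0$.

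Next, from the Euler sequence and the Whitney formula, $c_1(T_{\mathcal{X}/C})=c_1\big(f^*E^\vee\otimes\mathcal{O}_{\mathcal{X}}(1)\big)=3H-f^*c_1(E)$, and the twisting formula for the second Chern class of a rank-three bundle gives $c_2(T_{\mathcal{X}/C})=c_2(f^*E^\vee)+2H\cdot c_1(f^*E^\vee)+3H^2=3H^2-2(\deg E)HF$. Feeding this into the tangent sequence, and using $c_1(f^*T_C)=-f^*K_C$ together with $(f^*K_C)^2=0$ and $f^*K_C\cdot f^*c_1(E)=0$, yields $c_1(T_{\mathcal{X}})=c_1(T_{\mathcal{X}/C})-f^*K_C=3H-f^*c_1(E)-f^*K_C$ and $c_2(T_{\mathcal{X}})=c_2(T_{\mathcal{X}/C})-c_1(T_{\mathcal{X}/C})\cdot f^*K_C=3H^2-(6g-6+2\deg E)HF$, which are the first two displayed equalities.

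The intersection numbers are then pure bookkeeping. Intersecting $c_1(T_{\mathcal{X}})$ with $DF$ kills both pullback terms (as $F$ times a pullback of a divisor vanishes), leaving $3DHF$; intersecting with $DH$ replaces the pullbacks by $(2g-2)F$ and $(\deg E)F$, giving $3DH^2-(2g-2+H^3)DHF$; squaring $c_1(T_{\mathcal{X}})$, adding $c_2(T_{\mathcal{X}})$, and multiplying by $D$ gives the third identity after collecting the coefficient of $DHF$. For the last line I would invoke Hirzebruch--Riemann--Roch, so that $\chi(\mathcal{O}_{\mathcal{X}})$ equals the degree of the Todd genus $\tfrac1{24}\,c_1(T_{\mathcal{X}})\,c_2(T_{\mathcal{X}})$; expanding the product with the relations above, the $H^3$-contributions cancel and one is left with $24(1-g)$, hence $\chi(\mathcal{O}_{\mathcal{X}})=1-g$ (equivalently, $Rf_*\mathcal{O}_{\mathcal{X}}=\mathcal{O}_C$ gives $\chi(\mathcal{O}_{\mathcal{X}})=\chi(\mathcal{O}_C)=1-g$ at once).

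There is no deep obstacle here; the only point requiring care is fixing the projective-bundle convention, i.e.\ whether $\mathcal{O}_{\mathcal{X}}(1)$ is a quotient or a sub-line-bundle of $f^*E$ (equivalently whether the Euler sequence involves $f^*E$ or $f^*E^\vee$), since this controls the sign of $f^*c_1(E)$ in every formula. I would pin it down by the normalization $f_*\mathcal{O}_{\mathcal{X}}(1)=E$, which is consistent with the assumed relation $H^3=\deg E$, and use the Euler sequence written above; all signs then propagate consistently.
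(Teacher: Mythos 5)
Your proposal is correct and follows essentially the same route as the paper, which likewise derives everything from the relative Euler sequence and the short exact sequence relating $\Omega_{\mathcal{X}}$, $f^*\omega_C$ and $\Omega_{\mathcal{X}/C}$ (you simply use the dual, tangent-bundle forms of these two sequences and then apply the Whitney formula and Riemann--Roch). Your convention check via $f_*\mathcal{O}_{\mathcal{X}}(1)=E$ matches the paper's Euler sequence $0\to\Omega_{\mathcal{X}/C}\to f^*E\otimes\mathcal{O}_{\mathcal{X}}(-H)\to\mathcal{O}_{\mathcal{X}}\to0$, and all the numerical identities come out as stated.
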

\begin{proof}
The formulas follow from the relative Euler sequence
$$0\rightarrow \Omega_{\mathcal{X}/C}\rightarrow f^*\mathcal{E}\otimes\mathcal{O}_{\mathcal{X}}(-H)\rightarrow\mathcal{O}_{\mathcal{X}}\rightarrow0$$
and the standard exact sequence
$$0\rightarrow f^*\omega_C\rightarrow\Omega_{\mathcal{X}}\rightarrow\Omega_{\mathcal{X}/C}\rightarrow0.$$
\end{proof}

By Lemma \ref{large}, one sees that $\mathcal{O}_{\mathcal{X}}(H)$
and $\mathcal{O}_{\mathcal{X}}(K_{\mathcal{X}}+H)[1]$ are
$\nu_{H,F}^{\alpha, \beta}$-stable objects in $\Coh_C^{\beta
H}(\mathcal{X})$ for any $\alpha>0$ and $-2< \beta<1$. Hence from
Lemma \ref{t-stability}, it follows that there is a non-negative
rational number $t_0$ such that $\mathcal{O}_{\mathcal{X}}(H)$ and
$\mathcal{O}_{\mathcal{X}}(K_{\mathcal{X}}+H)[1]$ are $\nu_{\alpha,
\beta, t}$-stable for any $t\geq t_0$, $-2< \beta<1$ and $\alpha>0$.

\begin{theorem}\label{main2}
Assume that the following inequalities hold:
\begin{enumerate}
\item $\alpha-2<\beta<1-\alpha$;
\item $t>\max\{\frac{-\beta(\beta+2)H^3+4(\beta+2)(g-1)+\alpha^2}{(\beta+2)^2-\alpha^2}, t_0\}$.
\end{enumerate}
Then there exist rational numbers $a_0$, $a_1$ and $a_2$ only
depending on $\alpha$, $\beta$ and $\mathcal{X}$, such that
\begin{eqnarray}\label{6.1}
\ch_3^{\beta}(\mathcal{E})&\leq&
-\frac{\beta(\beta+1)}{2}H^2\ch^{\beta}_{1}(\mathcal{E})-(\beta
+\frac{1}{2})H\ch_2^{\beta}(\mathcal{E})+a_0\ch_0^{\beta}(\mathcal{E})
\\
\nonumber&&+a_1HF\ch_1^{\beta}(\mathcal{E})+a_2F\ch_2^{\beta}(\mathcal{E})
\end{eqnarray}
for any $\nu_{\alpha,\beta, t}$-semistable object $\mathcal{E}$ with
$\nu_{\alpha,\beta, t }(\mathcal{E})=0$.
\end{theorem}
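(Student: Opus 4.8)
The plan is to deduce (\ref{6.1}) for every $\nu_{\alpha,\beta,t}$-semistable $\mathcal{E}$ with $\nu_{\alpha,\beta,t}(\mathcal{E})=0$ from a single application of Hirzebruch--Riemann--Roch to the pair $(\mathcal{O}_{\mathcal{X}}(H),\mathcal{E})$, using the two stable line bundles $\mathcal{O}_{\mathcal{X}}(H)$ and $\mathcal{O}_{\mathcal{X}}(K_{\mathcal{X}}+H)[1]$ exhibited just above. First I would record that, because $\nu_{\alpha,\beta,t}(\mathcal{E})=0$ is finite, $\mathcal{E}$ is automatically $C$-torsion free (a nonzero $C$-torsion subobject would have $\nu_{\alpha,\beta,t}=+\infty$, forcing $FH\ch_1^{\beta}(\mathcal{E})=0$), and that by additivity of both sides of (\ref{6.1}) in Chern characters one may even pass to Jordan--H\"older factors and assume $\mathcal{E}$ is $\nu_{\alpha,\beta,t}$-stable; only its semistability is used below. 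By Lemma \ref{can}, $K_{\mathcal{X}}+H\equiv -2H+(2g-2+H^3)F$, so $\mu_C(\mathcal{O}_{\mathcal{X}}(H))=1$ and $\mu_C(\mathcal{O}_{\mathcal{X}}(K_{\mathcal{X}}+H))=-2$; thus by hypothesis (1), $\mathcal{O}_{\mathcal{X}}(H)\in\mathcal{T}_{\beta H}$ and $\mathcal{O}_{\mathcal{X}}(K_{\mathcal{X}}+H)\in\mathcal{F}_{\beta H}$, while $t>t_0$ from (2) makes both $\nu_{\alpha,\beta,t}$-stable objects of $\Coh_C^{\beta H}(\mathcal{X})$.

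The central claim is that $\chi(\mathcal{O}_{\mathcal{X}}(H),\mathcal{E})\le 0$, and I would prove this by showing $\Ext^i_{\mathcal{X}}(\mathcal{O}_{\mathcal{X}}(H),\mathcal{E})=0$ for $i=0,2,3$, so that $\chi(\mathcal{O}_{\mathcal{X}}(H),\mathcal{E})=-\ext^1_{\mathcal{X}}(\mathcal{O}_{\mathcal{X}}(H),\mathcal{E})\le0$. For $i=0$: a nonzero morphism $\mathcal{O}_{\mathcal{X}}(H)\to\mathcal{E}$ in $\Coh_C^{\beta H}(\mathcal{X})$ has image of slope $\ge\nu_{\alpha,\beta,t}(\mathcal{O}_{\mathcal{X}}(H))$ (quotient of a semistable object) and $\le\nu_{\alpha,\beta,t}(\mathcal{E})=0$ (subobject of a semistable object), which is impossible once $\nu_{\alpha,\beta,t}(\mathcal{O}_{\mathcal{X}}(H))>0$. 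For $i=3$: Serre duality gives $\Ext^3_{\mathcal{X}}(\mathcal{O}_{\mathcal{X}}(H),\mathcal{E})\cong\Hom_{\mathcal{X}}(\mathcal{E},\mathcal{O}_{\mathcal{X}}(K_{\mathcal{X}}+H))^{\vee}$, which vanishes because $\mathcal{O}_{\mathcal{X}}(K_{\mathcal{X}}+H)\in\Coh_C^{\beta H}(\mathcal{X})[-1]$ and $\mathcal{E}$ lies in the heart. For $i=2$: Serre duality gives $\Ext^2_{\mathcal{X}}(\mathcal{O}_{\mathcal{X}}(H),\mathcal{E})\cong\Hom_{\mathcal{X}}(\mathcal{E},\mathcal{O}_{\mathcal{X}}(K_{\mathcal{X}}+H)[1])^{\vee}$, which vanishes by the same slope comparison provided $\nu_{\alpha,\beta,t}(\mathcal{O}_{\mathcal{X}}(K_{\mathcal{X}}+H)[1])<0$; here strictness is essential, since $\mathcal{O}_{\mathcal{X}}(K_{\mathcal{X}}+H)[1]$ is stable and could otherwise be a quotient of $\mathcal{E}$ if the slopes coincided.

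It then remains to verify the two numerical inputs $\nu_{\alpha,\beta,t}(\mathcal{O}_{\mathcal{X}}(H))>0$ and $\nu_{\alpha,\beta,t}(\mathcal{O}_{\mathcal{X}}(K_{\mathcal{X}}+H)[1])<0$, which is a direct computation with $H^2F=1$, $H^3=\deg E$ and Lemma \ref{can}. One gets $FH\ch_1^{\beta}(\mathcal{O}_{\mathcal{X}}(H))=1-\beta>0$ and $\nu_{\alpha,\beta,t}(\mathcal{O}_{\mathcal{X}}(H))=\frac{(1-\beta)^2(H^3+t)-(t+1)\alpha^2}{2(1-\beta)}$, which is positive under (1)--(2) using $(1-\beta)^2>\alpha^2$; and $FH\ch_1^{\beta}(\mathcal{O}_{\mathcal{X}}(K_{\mathcal{X}}+H)[1])=\beta+2>0$, while $\nu_{\alpha,\beta,t}(\mathcal{O}_{\mathcal{X}}(K_{\mathcal{X}}+H)[1])<0$ is equivalent, after simplification, to $t>\frac{-\beta(\beta+2)H^3+4(\beta+2)(g-1)+\alpha^2}{(\beta+2)^2-\alpha^2}$, which is exactly the first term inside the maximum in hypothesis (2). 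Thus (1)--(2) are precisely what is needed here, and the inequality for $\mathcal{O}_{\mathcal{X}}(K_{\mathcal{X}}+H)[1]$ is the binding one.

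Finally I would turn $\chi(\mathcal{O}_{\mathcal{X}}(H),\mathcal{E})\le0$ into (\ref{6.1}). By Hirzebruch--Riemann--Roch, $\chi(\mathcal{O}_{\mathcal{X}}(H),\mathcal{E})=[\ch^{H}(\mathcal{E})\cdot\td(T_{\mathcal{X}})]_{3}$; expanding with Lemma \ref{can} ($\td_1=\tfrac12 c_1(T_{\mathcal{X}})$, $\td_2=\tfrac{1}{12}(c_1^2+c_2)(T_{\mathcal{X}})$, $\td_3=\chi(\mathcal{O}_{\mathcal{X}})=1-g$, together with the stated intersection numbers) and rewriting $\ch^{H}=e^{(\beta-1)H}\ch^{\beta H}$, one obtains
\begin{align*}
\chi(\mathcal{O}_{\mathcal{X}}(H),\mathcal{E})=&\ \ch_3^{\beta}(\mathcal{E})+\bigl(\beta+\tfrac12\bigr)H\ch_2^{\beta}(\mathcal{E})+\tfrac{\beta(\beta+1)}{2}H^2\ch_1^{\beta}(\mathcal{E})\\
&\ +a_2' F\ch_2^{\beta}(\mathcal{E})+a_1' HF\ch_1^{\beta}(\mathcal{E})+a_0'\ch_0^{\beta}(\mathcal{E}),
\end{align*}
where $a_0',a_1',a_2'$ are explicit expressions in $\beta$, $g$ and $\deg E$, hence depend only on $\alpha,\beta,\mathcal{X}$; combined with the previous step this is (\ref{6.1}) with $a_i=-a_i'$. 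The first three coefficients come out exactly as required precisely because the shift is $\mathcal{O}_{\mathcal{X}}(H)$, i.e. $\mu=\beta-1$, giving $\mu+\tfrac32=\beta+\tfrac12$ and $\tfrac12(\mu+1)(\mu+2)=\tfrac12\beta(\beta+1)$ --- which is also why the admissible range for $\beta$ is the interval $(-2,1)=\bigl(\mu_C(\mathcal{O}_{\mathcal{X}}(K_{\mathcal{X}}+H)),\mu_C(\mathcal{O}_{\mathcal{X}}(H))\bigr)$ between the two stable line bundles. The conceptual content of the argument is thin; the one genuinely delicate point is the verification in the third step that hypotheses (1)--(2) are exactly strong enough to force the two slope inequalities (and the stability of the two line bundles, via $t_0$), and I expect the careful bookkeeping there --- rather than any new idea --- to be the main thing to get right.
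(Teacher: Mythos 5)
Your proposal is correct and follows essentially the same route as the paper: verify that hypotheses (1)--(2) force $\nu_{\alpha,\beta,t}(\mathcal{O}_{\mathcal{X}}(H))>0$ and $\nu_{\alpha,\beta,t}(\mathcal{O}_{\mathcal{X}}(K_{\mathcal{X}}+H)[1])<0$, deduce $\Hom(\mathcal{O}_{\mathcal{X}}(H),\mathcal{E})=\Ext^2(\mathcal{O}_{\mathcal{X}}(H),\mathcal{E})=0$ from semistability and Serre duality, and convert $\chi(\mathcal{E}(-H))\leq0$ into (\ref{6.1}) via Grothendieck--Riemann--Roch with the coefficients you computed. The only (harmless) differences are that you additionally check $\Ext^3=0$, which is not needed since $\chi=-\ext^1+\ext^2-\ext^3\leq0$ already follows from the vanishing in degrees $0$ and $2$, and the side remarks on $C$-torsion freeness and Jordan--H\"older factors, which the paper does not use.
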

\begin{proof}
Under our assumptions on $\alpha, \beta$ and $t$, one sees that
\begin{eqnarray*}
\nu_{\alpha,\beta,t}(\mathcal{O}_{\mathcal{X}}(H))&=&\nu_{\alpha,\beta,0}(\mathcal{O}_{\mathcal{X}}(H))+t\nu_{H,F}^{\alpha,\beta}(\mathcal{O}_{\mathcal{X}}(H))\\
&=&\frac{H^3(1-\beta)^2-\alpha^2}{2(1-\beta)}+t\frac{(1-\beta)^2-\alpha^2}{2(1-\beta)}\\
&>0&
\end{eqnarray*}
and
\begin{eqnarray*}
\nu_{\alpha,\beta,t}(\mathcal{O}_{\mathcal{X}}(K_{\mathcal{X}}+H)[1])&=&\nu_{\alpha,\beta,0}(\mathcal{O}_{\mathcal{X}}(-2H+(2g-2+H^3)F))\\
&&+t\nu_{H,F}^{\alpha,\beta}(\mathcal{O}_{\mathcal{X}}(-2H))\\
&=&\frac{H^3(\beta+2)^2-2(\beta+2)(2g-2+H^3)-\alpha^2}{2(-2-\beta)}\\
&&+t\frac{(\beta+2)^2-\alpha^2}{2(-2-\beta)}\\
&<&0.
\end{eqnarray*}
By the $\nu_{\alpha,\beta,t}$-semistability of $\mathcal{E}$, the
inequalities above imply that
$$\Hom(\mathcal{O}_{\mathcal{X}}(H), \mathcal{E})\cong\Hom(\mathcal{O}_{\mathcal{X}}, \mathcal{E}(-H))=0$$
and
$$\Ext^2(\mathcal{O}_{\mathcal{X}}, \mathcal{E}(-H))\cong\Hom(\mathcal{E}, \mathcal{O}_{\mathcal{X}}(K_{\mathcal{X}}+H)[1])=0.$$
Therefore the application of the Grothendieck-Riemann-Roch theorem
leads to
\begin{eqnarray*}
0&\geq&\chi(\mathcal{E}(-H))=\ch_3^H(\mathcal{E})
+\frac{c_1}{2}\ch_2^H(\mathcal{E})
+\frac{c^2_1+c_2}{12}\ch_1^H(\mathcal{E})+\chi(\mathcal{O}_{\mathcal{X}})\ch_0(\mathcal{E})\\
&=&\ch_3^{\beta}(\mathcal{E})+\left((\beta-1)
H+\frac{c_1}{2}\right)\ch_2^{\beta}(\mathcal{E})\\
&&+\left(\frac{(\beta-1)^2}{2}H^2+\frac{\beta-1
}{2}Hc_1+\frac{c^2_1+c_2}{12}\right)\ch_1^{\beta}(\mathcal{E})\\
&&+\left(\frac{(\beta-1)^3}{6}H^3+\frac{(\beta-1)^2}{4}H^2c_1+\frac{\beta-1
}{12}H(c_1^2+c_2)+\chi(\mathcal{O}_{\mathcal{X}})\right)\ch_0(\mathcal{E})\\
&=&\ch_3^{\beta}(\mathcal{E})+(\beta+\frac{1}{2})H\ch_2^{\beta}(\mathcal{E})+\frac{1}{2}\beta(\beta+1)H^2\ch_1^{\beta}(\mathcal{E})\\
&&-a_2F\ch_2^{\beta}(\mathcal{E})-a_1HF\ch_1^{\beta}(\mathcal{E})-a_0\ch_0(\mathcal{E}),
\end{eqnarray*}
where $c_i=c_i(T_{\mathcal{X}})$ for $i=1,2$ and the constants $a_0$, $a_1$ and $a_2$ only depend on $\alpha,
\beta$ and $\mathcal{X}$.
This completes the proof of Theorem \ref{main2}.
\end{proof}

\begin{corollary}\label{Cor}
There exist locally finite stability conditions on
$\mathcal{X}:=\mathbb{P}(\mathcal{E})$.
\end{corollary}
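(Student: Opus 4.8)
The plan is to derive Corollary~\ref{Cor} from Theorem~\ref{main}, once Conjecture~\ref{Conj1} has been verified for a suitable triple $(\alpha,\beta,t)$; the verification will use the Bogomolov--Gieseker inequality of Theorem~\ref{main2}. First I would fix the parameters. Pick a rational number $\beta\in(-1,0)$, so that the leading coefficient $-\tfrac{\beta(\beta+1)}{2}$ appearing in (\ref{6.1}) is strictly positive. For such $\beta$ one has $\min\{\beta+2,\,1-\beta\}>1$, so one may choose $\alpha\in\sqrt{\mathbb{Q}_{>0}}$ small enough that hypothesis~(1) of Theorem~\ref{main2}, $\alpha-2<\beta<1-\alpha$, is satisfied; for instance $\beta=-\tfrac12$, $\alpha=1$ works. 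Finally choose a rational $t$ exceeding the explicit bound of hypothesis~(2) of Theorem~\ref{main2} (which in particular makes $t>t_0$, $t_0$ being the constant supplied by Lemma~\ref{large} and Lemma~\ref{t-stability} for the objects $\mathcal{O}_{\mathcal{X}}(H)$ and $\mathcal{O}_{\mathcal{X}}(K_{\mathcal{X}}+H)[1]$). With these choices Theorem~\ref{main2} applies.

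Next I would observe that inequality (\ref{6.1}) is precisely an instance of inequality (\ref{BG1}) of Conjecture~\ref{Conj1}. Matching the two expressions term by term, the coefficient of $H^2\ch_1^{\beta}$ equals $-\tfrac{\beta(\beta+1)}{2}$, the coefficient of $H\ch_2^{\beta}$ equals $-(\beta+\tfrac12)$, and the coefficients of $HF\ch_1^{\beta}$, $F\ch_2^{\beta}$, $\ch_0^{\beta}$ are the constants $a_1,a_2,a_0$ of Theorem~\ref{main2}; all of these depend only on $\alpha$, $\beta$ and $\mathcal{X}$, hence are independent of $\mathcal{E}$. The sole positivity requirement imposed by Conjecture~\ref{Conj1}---that the coefficient of $H^2\ch_1^{\beta}$ be strictly positive---holds by our choice $\beta\in(-1,0)$. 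Therefore Conjecture~\ref{Conj1} is satisfied by the triple $(\alpha,\beta,t)$.

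It then suffices to apply Theorem~\ref{main}: for this triple, and for any rational $l>\max\{b_1,0\}$, where $b_1$ denotes the coefficient of $HF\ch_1^{\beta}$ in (\ref{6.1}) (namely the constant $a_1$ of Theorem~\ref{main2}), the pair $(Z_l,\mathcal{A}_t^{\alpha,\beta}(\mathcal{X}))$ is a locally finite stability condition on $\mathcal{X}=\mathbb{P}(E)$, which is exactly the assertion of Corollary~\ref{Cor}. This last step carries no difficulty of its own; all the real content lies in Theorems~\ref{main2} and~\ref{main}. The only point worth checking is the mutual compatibility of the parameter constraints --- that the window $\beta\in(-1,0)$ demanded by positivity of the leading coefficient is consistent with hypotheses~(1) and~(2) of Theorem~\ref{main2} --- and this is transparent, since for $\beta$ in that range $\alpha$ may be taken arbitrarily small and $t$ arbitrarily large.
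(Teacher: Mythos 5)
Your proposal is correct and follows essentially the same route as the paper: the paper's own proof simply notes that the coefficient $-\tfrac{\beta(\beta+1)}{2}$ of $H^2\ch_1^{\beta}$ in (\ref{6.1}) is positive for $-1<\beta<0$, so that Theorem \ref{main2} verifies Conjecture \ref{Conj1} and Theorem \ref{main} applies. Your additional checks on the compatibility of the parameter constraints (choosing $\alpha$ small and $t$ large) are a sensible spelling-out of what the paper leaves implicit.
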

\begin{proof}
It turns out that the coefficient $-\frac{\beta(\beta+1)}{2}$ of $H^2\ch^{\beta}_{1}(\mathcal{E})$ in (\ref{6.1}) is positive when $-1<\beta<0$.
Therefore the conclusion follows from Theorem \ref{main} and Theorem \ref{main2}.
\end{proof}

\bibliographystyle{amsplain}

\end{document}